\definecolor{allrefcolors}{rgb}{.05,.45,.6}
\theoremstyle{theorem}
\newtheorem{thm}{Theorem}[section]
\newtheorem{prop}[thm]{Proposition}
\newtheorem{lem}[thm]{Lemma}
\newtheorem{quest}[thm]{Question}
\theoremstyle{definition}
\newtheorem{defn}[thm]{Definition}
\newtheorem{con}[thm]{Construction}
\newtheorem{notn}[thm]{Notation}
\theoremstyle{remark}
\newtheorem{rem}[thm]{Remark}
\crefname{thm}{Theorem}{Theorems}
\crefname{prop}{Proposition}{Propositions}
\crefname{lem}{Lemma}{Lemmas}
\crefname{cor}{Corollary}{Corollaries}
\crefname{defn}{Definition}{Definitions}
\crefname{con}{Construction}{Constructions}
\crefname{exmp}{Example}{Examples}
\crefname{notn}{Notation}{Notations}
\crefname{asmpt}{Assumption}{Assumptions}
\crefname{rem}{Remark}{Remarks}
\crefname{warn}{Warning}{Warnings}
\crefname{section}{Section}{Sections}
\crefname{subsection}{Subsection}{Subsections}
\crefname{sec}{Section}{Sections}
\crefname{subsec}{Subsection}{Subsections}
\crefname{eqn}{Equation}{Equations}
\crefname{part}{Part}{Parts}
\newcommand{\IC}{\mathbb{C}}
\newcommand{\IN}{\mathbb{N}}
\newcommand{\IP}{\mathbb{P}}
\newcommand{\IQ}{\mathbb{Q}}
\newcommand{\IZ}{\mathbb{Z}}
\newcommand{\Ione}{\mathbbm{1}}
\newcommand{\sC}{\mathcal{C}}
\newcommand{\sE}{\mathcal{E}}
\newcommand{\sF}{\mathcal{F}}
\newcommand{\sH}{\mathcal{H}}
\newcommand{\sM}{\mathcal{M}}
\newcommand{\sP}{\mathcal{P}}
\newcommand{\fo}{\mathfrak{o}}
\newcommand{\gap}{\hspace{15pt}}
\newcommand{\wt}[1]{\widetilde{#1}}
\newcommand{\pgl}{\PGL_2(\IC)}
\DeclareMathOperator{\Ima}{Im}
\DeclareMathOperator{\Diff}{Diff}
\DeclareMathOperator{\pt}{pt}
\DeclareMathOperator{\Bd}{Bd}
\DeclareMathOperator{\PGL}{PGL}
\DeclareMathOperator{\GW}{GW}
\DeclareMathOperator{\fat}{fat}
\DeclareMathOperator{\ev}{ev}
\begin{document}

\title{Fundamental groups of rationally connected symplectic manifolds}

\author{Alex Pieloch}

\address{Department of Mathematics\\ Massachusetts Institute of Technology \\ 77 Massachusetts Avenue
\\ Cambridge, MA 02139 }
\email{pieloch@mit.edu}

\thanks{
The author was supported by a National Science Foundation Postdoctoral Research Fellowship through NSF grant DMS-2202941 as well as the Simons Foundation through its ``Homological Mirror Symmetry'' Collaboration grant.}

\begin{abstract}
We show that the fundamental group of every enumeratively rationally connected closed symplectic manifold is finite.  In other words, if a closed symplectic manifold has a non-zero Gromov-Witten invariant with two point insertions, then it has finite fundamental group.  We also show that if the spherical homology class associated to such a non-zero Gromov-Witten invariant is holomorphically indecomposable, then the rational second homology of the symplectic manifold has rank one.
\end{abstract}

\maketitle
\tableofcontents

\section{Introduction}\label{sec:Introduction}

\subsection{Statement of results}\label{subsec:StatementsOfResults}

The main result of this paper concerns the following two classes of symplectic manifolds.

\begin{defn}
A symplectic manifold $(X,\Omega)$ is \emph{enumeratively uniruled} if there exists a spherical second homology class $A \in H_2(X;\IZ)$ and non-torsion homology classes $A_1,\dots,A_k \in H_\bullet(X;\IZ)$ such that
\[ \GW_{0,1+k}^A(X,\Omega;[\pt],A_1,\dots,A_k) \neq 0. \]
A symplectic manifold $(X,\Omega)$ is \emph{enumeratively rationally connected} if there exists a spherical second homology class $A \in H_2(X;\IZ)$ and non-torsion homology classes $A_1,\dots,A_k \in H_\bullet(X;\IZ)$ such that
\[ \GW_{0,k+2}^A(X,\Omega;[\pt],[\pt],A_1,\dots,A_k) \neq 0. \]
\end{defn}

The goal of this paper is to establish the following result.

\begin{thm}\label{thm:MainTheorem}
If $(X,\Omega)$ enumeratively uniruled with
\[ \GW_{0,2+k}^A(X,\Omega;[\pt],[Z],A_1,\dots,A_k) \neq 0, \]
where $h: Z \to X$ is an embedded submanifold of $X$, then the image of
\[ h_*: \pi_1(Z) \to \pi_1(X) \]
is a finite index subgroup of $\pi_1(X)$.  Moreover, if $A \in H_2(X;\IZ)$ is $\Omega$-indecomposable, then $H_2(X;\IQ)$ is generated by $h_*\left(H_2(Z;\IQ)\right)$ and $A$.
\end{thm}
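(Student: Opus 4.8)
The plan is to use the non-vanishing Gromov-Witten invariant to produce, through a point $p \in X$ in general position, a one-parameter family of stable maps whose images sweep out $X$, each of which passes through $p$ and meets $Z$; then a standard monodromy-type argument along this family shows that every loop in $X$ is homotopic to one that can be pushed into $Z$, giving surjectivity of $h_*$ up to finite index.

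\textbf{Step 1: Setting up the moduli space and the evaluation map.} By hypothesis the invariant $\GW_{0,2+k}^A(X,\Omega;[\pt],[Z],A_1,\dots,A_k) \neq 0$. Fix a generic almost complex structure $J$ tamed by $\Omega$, together with generic representatives of the Poincaré duals of the insertions $A_1,\dots,A_k$. I would consider the moduli space $\sM$ of stable genus-zero $J$-holomorphic maps with $2+k$ marked points in class $A$, cut down by requiring the first marked point to map to a chosen generic point $p$, the second marked point to lie on $Z$, and the remaining marked points to lie on the chosen cycles dual to the $A_i$. Non-vanishing of the invariant guarantees this cut-down moduli space is non-empty and has a component $\sM_p$ of the expected dimension carrying a non-trivial virtual count, which varies in a cobordism as $p$ varies. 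The key geometric consequence is: for \emph{every} $p$ in a dense (in fact, full-measure) subset of $X$, there is a stable map $u_p \in \sM_p$ whose image contains $p$ and meets $Z$.

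\textbf{Step 2: From sweeping curves to $\pi_1$-surjectivity up to finite index.} Let $\gamma$ be a loop in $X$ based at a point $q_0 \in Z$. I want to homotope $\gamma$ into $Z$. Cover the image of $\gamma$ by finitely many of the stable maps produced in Step 1: more precisely, using that the (virtual) family of such curves dominates $X$, I would first arrange a connectedness statement — the space of pairs $(p, u_p)$ maps to $X$ with image all of $X$ and connected fibers after a suitable base change — so that $\gamma$ is freely homotopic to a concatenation of arcs, each of which lies on the image of a single stable map in the family, with consecutive arcs sharing an endpoint that lies on $Z$ (here one uses that each curve meets $Z$, and that the evaluation at the $Z$-marked point can be made to sweep out $Z$ as well, or at least to connect the relevant basepoints). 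Since a genus-zero stable curve is simply connected, and each component of it that meets $Z$ does so at a point, each arc is homotopic rel endpoints, within the image of its curve $\cup\, Z$, into $Z$. Concatenating, $\gamma$ is homotopic into $Z$. This shows $h_*(\pi_1(Z)) \supseteq$ a finite-index subgroup; combined with the enumerative-uniruled hypothesis (which, as in \Cref{thm:MainTheorem}'s first clause with $Z = \pt$, already forces $\pi_1(X)$ finite) one deduces finite index. The cleanest route is to run the argument on the universal cover $\wt{X} \to X$: the pulled-back class and cycles still carry a non-zero count component through each point, the lifted curves are still connected and simply connected, so the preimage of $Z$ in $\wt X$ is connected, which is exactly the statement that $h_*(\pi_1(Z))$ has finite index in $\pi_1(X)$.

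\textbf{Step 3: The homology statement.} Assume now $A$ is $\Omega$-indecomposable. Then any stable map in class $A$ through $p$ has an \emph{irreducible} domain — it cannot bubble, since bubbling would decompose $A$ into classes of strictly smaller $\Omega$-area, each still spherical, contradicting indecomposability (ghost components aside, which contribute nothing). So the curves produced in Step 1 are irreducible rational curves $C_p$ with $[C_p] = A$, passing through a generic $p$ and meeting $Z$. Given any class $\beta \in H_2(X;\IQ)$, represent it by a smooth surface in general position; by the moving argument it is swept by curves $C_p$ and meets $Z$, and one shows $\beta$ is a rational combination of $A$ and the image of a class in $H_2(Z;\IQ)$ — concretely, arguing on $\wt X$ again, $H_2(\wt X;\IQ)$ is generated by $h_*H_2(\wt Z;\IQ)$ together with the class of the fiber curve $A$, by a Leray/decomposition argument for the dominant family of rational curves, and one pushes this down to $X$.

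\textbf{Main obstacle.} The technical heart — and the step I expect to be the hardest — is Step 2's connectedness input: upgrading ``a non-zero virtual count through a generic point'' to ``an honest connected, $X$-dominating family of curves through which loops can be pushed.'' Virtual counts can be non-zero even when the actual moduli space is badly behaved (excess dimension, multiply-covered components), so one must either work with a geometric perturbation that makes the relevant moduli space a manifold of the right dimension through generic $p$, or argue directly with the virtual fundamental class that the fiberwise evaluation image is all of $X$ and that a suitable incidence variety is connected. Establishing that the curves can be chosen to meet $Z$ \emph{and} be chained together with basepoints on $Z$ — essentially a relative/fiberwise connectedness of the evaluation map $\ev_1 \times \ev_2: \sM \to X \times X$ restricted to the diagonal-type constraint — is where the real work lies; the homotopy-theoretic and homological deductions in Steps 2–3 are then formal.
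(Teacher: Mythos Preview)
Your proposal has a genuine gap: the ``chain curves together and push loops into $Z$'' argument in Step~2 is not the right mechanism, and the connectedness obstacle you flag is a symptom of this. The paper's proof avoids this entirely by never trying to homotope a given loop in $X$. Instead, it works with the cut-down moduli space $M$ (constraints imposed at the last $k$ marked points and at the $Z$-marked point) and the evaluation map $g = \ev_0|_M : M \to X$ at the \emph{free} marked point. The non-vanishing GW invariant says precisely that $g$ is an $n$-pseudocycle with $[g]\cdot[\pt]\neq 0$, i.e.\ $[g]\neq 0$ in top degree. A clean covering-space lemma (\cref{lem:PseudoFiniteIndex}) then gives that $\Ima(g_*)\subset\pi_1(X)$ has finite index: lift $g$ to the cover $\wt Y$ corresponding to $\Ima(g_*)$, and observe the lift still has non-zero top homology class, so $\wt Y$ is compact, hence the index is finite. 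No connectedness of moduli, no chaining of curves, no control of where basepoints land is needed.

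The second missing idea is how to pass from $\Ima(g_*)$ to $\Ima(h_*)$. The paper uses the universal curve: over the open stratum of $M$ there is an oriented $S^2$-bundle $\pi:\sC|_M\to M$ with two sections $s_0,s_1$ and an evaluation $e:\sC|_M\to X$ satisfying $e\circ s_i = \ev_i|_M$. Since $\pi_1(S^2)=0$, the long exact sequence of the fibration shows $(s_0)_*$ and $(s_1)_*$ agree on $\pi_1$, hence $g_*=(\ev_0)_*$ and $(\ev_1)_*$ have the \emph{same} image in $\pi_1(X)$. But $\ev_1|_M$ factors through $h:Z\to X$ by construction, so $\Ima(g_*)\subset\Ima(h_*)$, and finite index follows. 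Your sketch gestures at the simple-connectedness of the domain but never isolates this $S^2$-bundle/section comparison, which is what replaces the ad hoc loop-pushing. The homology statement is proved by the same structure: when $A$ is indecomposable the relevant moduli space can be taken to be a \emph{closed} manifold, so $g_*$ is surjective on $H_2(-;\IQ)$ by Poincar\'e duality, and the Gysin sequence for $\pi$ shows $(s_0)_*$ and $(s_1)_*$ differ on $H_2$ by multiples of the fibre class, which is $A$.
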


We have the following immediate corollary.

\begin{thm}\label{thm:MainCor}
If $(X,\Omega)$ is enumeratively rationally connected, then $\pi_1(X)$ is finite.  Moreover, if $(X,\Omega)$ is enumeratively rationally connected with 
\[ \GW_{0,2+k}^A(X,\Omega;[\pt],[\pt],A_1,\dots,A_k) \neq 0 \]
for some $\Omega$-indecomposable homology class $A \in H_2(X;\IZ)$, then $H_2(X;\IQ)$ is generated by the class $A$.
\end{thm}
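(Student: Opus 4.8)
Theorem~\ref{thm:MainCor} is the case $Z=\{\mathrm{pt}\}$ of Theorem~\ref{thm:MainTheorem}: an enumeratively rationally connected $(X,\Omega)$ satisfies the hypothesis of Theorem~\ref{thm:MainTheorem} for $h\colon\{\mathrm{pt}\}\hookrightarrow X$, and then ``$h_*\pi_1(\{\mathrm{pt}\})$ has finite index'' is exactly ``$\pi_1(X)$ is finite'', while ``$H_2(X;\IQ)$ is generated by $h_*H_2(\{\mathrm{pt}\};\IQ)$ and $A$'' is ``$H_2(X;\IQ)=\IQ\cdot A$''. So the content is Theorem~\ref{thm:MainTheorem}, and here is how I would prove that (take $Z$ connected; the general case is similar).

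Fix cycles $C_1,\dots,C_k$ representing $A_1,\dots,A_k$ and let $\widehat{\sM}$ be the moduli space of genus-$0$ stable maps $u\colon\Sigma\to X$ in class $A$ with marked points $x_1,\dots,x_{k+2}$ satisfying $u(x_2)\in Z$ and $u(x_{2+i})\in C_i$, \emph{with no constraint on $u(x_1)$}. By Gromov compactness $\widehat{\sM}$ is compact, and it carries a virtual fundamental class of virtual real dimension $\dim_\IR X$ with
\[ (\ev_1)_*[\widehat{\sM}]^{\mathrm{vir}}=\GW_{0,2+k}^A(X,\Omega;[\pt],[Z],A_1,\dots,A_k)\cdot[X]\in H_{\dim_\IR X}(X;\IQ), \]
a nonzero rational multiple of $[X]$. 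The key elementary point is that $\Sigma$ is a tree of spheres, hence simply connected, so for each $u\in\widehat{\sM}$ the $u$-image of a path in $\Sigma$ from $x_1$ to $x_2$ is a well-defined homotopy class of paths in $X$ from $u(x_1)$ to $Z$. Letting $\sP\to X$ be the covering space whose fibre over $q$ is the set of homotopy classes of paths from $q$ to $Z$ — equivalently, the cover associated with $H:=h_*\pi_1(Z)\subseteq\pi_1(X)$ — this assignment is a continuous lift $\widehat{\ev}_1\colon\widehat{\sM}\to\sP$ of $\ev_1$. Since $\pi_1(X)$ acts transitively on the fibres of $\sP$ with point-stabilisers conjugate to $H$, $\sP$ is connected and $\mathrm{proj}\colon\sP\to X$ has exactly $[\pi_1(X):H]$ sheets.

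Now push forward: $(\widehat{\ev}_1)_*[\widehat{\sM}]^{\mathrm{vir}}\in H_{\dim_\IR X}(\sP;\IQ)$ has $\mathrm{proj}_*$-image a nonzero multiple of $[X]$, hence is itself nonzero; but $\sP$ is a connected boundaryless manifold of dimension $\dim_\IR X$, so if $\sP$ were non-compact its top rational homology would vanish. Therefore $\sP$ is compact, i.e.\ $[\pi_1(X):H]<\infty$. (For $Z$ a point this is the universal-cover argument: $\sP=\widetilde X$ and one gets $H_{\dim X}(\widetilde X;\IQ)\neq0$, so $\widetilde X$ is compact.) For the second assertion $\mathrm{proj}$ is now a finite cover, so $\mathrm{proj}_*$ is onto on $H_2(-;\IQ)$, and since $(\widehat{\ev}_1)_*[\widehat{\sM}]^{\mathrm{vir}}$ is a nonzero multiple of $[\sP]$, the projection formula makes $(\widehat{\ev}_1)_*$ onto on $H_2(-;\IQ)$ as well; hence $H_2(X;\IQ)=(\ev_1)_*H_2(\widehat{\sM};\IQ)$. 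When $A$ is $\Omega$-indecomposable no stable map in class $A$ can carry two spherical components of positive $\Omega$-area, so all domains are $\IP^1$ up to constant bubbles, and forgetting $x_1$ presents $\widehat{\sM}$ as, rationally and modulo boundary strata, an $S^2$-bundle over the moduli space $\widehat{\sM}_0$ of the remaining data; thus $H_2(\widehat{\sM};\IQ)$ is spanned by the fibre class $F$ and the image of the section ``$x_1=x_2$''. Now $(\ev_1)_*F=A$ because the fibre maps onto a curve in class $A$, and along that section $\ev_1$ agrees with $\ev_2\colon\widehat{\sM}_0\to Z$, whose image lies in $h_*H_2(Z;\IQ)$. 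Hence $H_2(X;\IQ)=\IQ\cdot A+h_*H_2(Z;\IQ)$.

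The skeleton is short; the work is analytic. One must construct the virtual classes on $\widehat{\sM}$ and $\widehat{\sM}_0$ and justify the displayed identity in the absence of transversality, show that $\widehat{\ev}_1$ is continuous across the boundary strata where $\Sigma$ degenerates — so that the lift to $\sP$ genuinely exists — and, for the $H_2$-statement, control the constant-bubble strata over which $\widehat{\sM}\to\widehat{\sM}_0$ fails to be an honest $S^2$-bundle. I expect establishing the continuity of $\widehat{\ev}_1$ over the whole compactified moduli space to be the main obstacle.
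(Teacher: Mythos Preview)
Your reduction of \cref{thm:MainCor} to the case $Z=\{\pt\}$ of \cref{thm:MainTheorem} is exactly how the paper treats it (indeed, the paper simply calls \cref{thm:MainCor} an ``immediate corollary'' and gives no separate proof). The substance, as you say, is \cref{thm:MainTheorem}, and there your sketch is correct in outline but takes a route genuinely different from the paper's.

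For the $\pi_1$ statement, the paper works entirely on the open stratum of the moduli space (smooth $\IP^1$ domains) using Cieliebak--Mohnke pseudocycles. It first restricts $\ev_0$ to the fibre $M=\ev_1^{-1}(Z)$ to get a nonzero $n$-pseudocycle $g\colon M\to X$, then argues in two separate steps: (a) the image of $g_*$ on $\pi_1$ has finite index, via a lift of the \emph{pseudocycle} $g$ to the cover associated to $g_*\pi_1(M)$ (this is \cref{lem:PseudoFiniteIndex}, and the main work there is showing Zinger's open cover of the boundary pulls back nicely to the cover); (b) the universal curve $\pi\colon\sC|_M\to M$ is an honest oriented $S^2$-bundle, so its two sections $s_0,s_1$ induce the same map on $\pi_1$, whence $g_*=(\ev_0|_M)_*$ and $(\ev_1|_M)_*$ agree on $\pi_1(M)$ and the latter factors through $h_*$. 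Your argument fuses these steps: you lift $\ev_1$ directly to the cover $\sP$ associated to $h_*\pi_1(Z)$, using simple-connectedness of nodal domains to build the lift over the whole Gromov compactification, and then push the virtual class. This is conceptually cleaner---one lift instead of a lift plus an $S^2$-bundle comparison---but it trades elementary pseudocycle bookkeeping for VFC machinery and the continuity of $\widehat{\ev}_1$ across boundary strata, which you correctly flag as the main technical cost. The paper's choices are made precisely to avoid both issues: by staying on the open stratum, the bundle is literal and the ``pseudocycle lifts to the cover'' argument in \cref{lem:PseudoFiniteIndex} needs nothing beyond Zinger's isomorphism.

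For the $H_2$ statement, the arguments are closer, but the paper again sidesteps your ``modulo constant-bubble strata'' caveat by using a different compactification: in \cref{subsec:Indecomposable} it allows marked points to collide (no stabilization by ghost bubbles), producing for generic $J$ a genuine closed smooth manifold $\widehat{\sM}_{0,k}^A(X,J)$ on which the forgetful map is an honest oriented $S^2$-bundle (\cref{prop:NaiveCompactificationOfIndecomp}). Then surjectivity of $(\ev_1)_*$ on $H_2$ is \cref{lem:PositiveDegreeAndSurj} and the Gysin sequence (\cref{lem:SphereBundleProperties}) finishes exactly as you describe. So your approach is sound, but where you defer to virtual classes and boundary-stratum control, the paper substitutes a specific pseudocycle model and a nonstandard compactification that make everything a closed manifold.
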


The motivation for our main result comes from algebraic geometry.  A smooth complex projective variety is \emph{rationally connected} if there exists a (possibly nodal) genus zero curve passing through any two generic points in the variety.  A classical result \cite{Campana_OnTwistorSpacesOfTheClassC} (cf. \cite[Corollary 4.18]{Debarre_HigherDimensionalAlgebraicGeometry}) says that every rationally connected variety has trivial fundamental group.  An argument using Hilbert schemes is used to show that such varieties have finite fundamental groups and Hodge theory is used to show that rationally connected varieties with finite fundamental groups have, in fact, trivial fundamental groups.  This leads to an obvious question in the symplectic case.

\begin{quest}
Does there exist an enumeratively rationally connected symplectic manifold with non-trivial fundamental group?
\end{quest}

\begin{rem}
In the specific cases where rational connectedness is known to be equivalent to enumerative rational connectedness, \cref{thm:MainCor} provides another proof of the fact that rationally connected varieties have finite fundamental groups.  In full generality, it is an open question whether or not every rationally connected variety is enumeratively rationally connected.  The only known cases are in complex dimensions one, two, and three.  See \cite{Tian_RationallyConnectedThreefolds} for further discussion and proofs of the known cases.
\end{rem}

\begin{rem}
By Koll\'{a}r-Miyaoka-Mori \cite[Theorem 0.1]{KollarMiyaokaMori_RationalConnectednessAndBoundednessOfFanoManifolds}, a smooth complex projective Fano variety is rationally connected and, thus, is simply connected.  Given \cref{thm:MainCor}, one could ask if every symplectic Fano manifold\footnote{A symplectic manifold $(X,\Omega)$ is \emph{Fano} if $\Omega = \kappa \cdot c_1(TX)$ for some positive constant $\kappa>0$.} has finite fundamental group or is enumeratively rationally connected.  However, this fails to be the case.  Indeed, Panov-Fine \cite{FinePanov_HyperbolicFano} construct non-K\"{a}hler symplectic Fano manifolds that have infinite fundamental groups.
\end{rem}

\begin{rem}
The literature contains several approaches to defining Gromov-Witten invariants, see \cite{FukayaOno_ArnoldConjectureAndGromovWittenInvariant,LiTian_VirtualModuliCyclesAndGromovWittenInvariantsOfGeneralSymplecticManifolds,Ruan_VirtualNeighborhoodsAndPseudoHolomorphicCurves,Siebert_GromovWittenInvariantsOfGeneralSymplecticManifolds,HoferWysockiZehnder_ApplicationsOfPolyfoldTheory1,Pardon_AlgebraicApproachToVFC,AbouzaidMcLeanSmith_HamiltonianLoops,BaiXu_IntegerTypeGWInvariants}, which may or may not all agree.  In this paper, we have elected to use Cieliebak-Mohnke's approach to defining Gromov-Witten invariants via rational pseudocycles, see \cite{CieliebakMohnke_HypersurfacesAndTransversality}.  This was done to minimize the amount of necessary technical machinery.  However, we believe that the results of this paper will continue to hold for all of the different constructions of Gromov-Witten invariants above.  Elements of our argument will simply need to be appropriately modified to suit the flavor of Gromov-Witten invariants and virtual fundamental cycles used.
\end{rem}

\subsection{Idea of proof}\label{subsec:IdeaOfProof}

To illustrate the idea behind \cref{thm:MainTheorem}, consider the case where
\[ \GW_{0,2}^A(X,\Omega;[\pt],[\pt]) = q \not = 0. \]
Let us also assume that we are in a nice situation where the moduli space of stable genus zero $J$-holomorphic maps with $2$-marked points of class $A \in H_2(X;\IZ)$, denoted $\overline{\sM}_{0,2}^A(X,J)$, is a smooth closed oriented manifold of the expected dimension (which in this case is $4n$), where $J$ is some regular $\Omega$-tamed almost complex structure.

We have an evaluation map
\[ \ev = (\ev_0,\ev_1): \overline{\sM}_{0,2}^A(X,J) \to X \times X. \]
For generic $p_1 \in X$, $M \coloneqq \ev^{-1}(X \times \{p_1\})$ is a smooth closed oriented manifold of dimension $2n$ such that:
\begin{enumerate}
	\item $\ev_0|_{M}$ is a degree $q$ map to $X$.  This follows from the non-vanishing Gromov-Witten invariant assumption and Poincar\'{e} duality.
	\item $\ev_1|_{M} \equiv p_1$.  This follows by definition of $M$.
\end{enumerate}
By covering space theory, see \cref{lem:PseudoFiniteIndex}, the image of
\[ (\ev_0|_{M})_* : \pi_1(M) \to \pi_1(X) \]
is a finite index subgroup of $\pi_1(X)$ (this fact continues to hold even when $\ev_0|_M$ is a non-zero pseudocycle of dimension $2n$).  The maps $\ev_0|_{M}$ and $\ev_1|_{M}$ are not homotopic; however, they are homotopic over any generic $1$-skeleton of $M$.  To see this, recall that the moduli space $\overline{\sM}_{0,2}^A(X,J)$ carries a universal curve
\[ \pi: \sC_{0,2}^A(X,J) \to \overline{\sM}_{0,2}^A(X,J), \]
where $\pi^{-1}([u])$ is given by the domain of the stable map $u$.  $\pi$ has (tautological) sections $s_0$ and $s_1$ and an evaluation map
\[ e: \sC_{0,2}^A(X,J) \to X \]
that satisfy $\ev_i = e \circ s_i$.  Generically, the locus of maps in $\overline{\sM}_{0,2}^A(X,J)$ with nodal domains has codimension two.  So for a generic choice of $1$-skeleton in $M$, say $M_{(1)}$, the restriction $\pi|_{M_{(1)}}$ defines an oriented $S^2$-bundle.  The obstruction for the two sections $s_i|_{M_{(1)}}$ to be homotopic lies in the fundamental group of the fibre of $\pi|_{M_{(1)}}$, which is trivial.  Since $\ev_i = e \circ s_i$, the $\ev_i|_{M}$ induce isomorphic maps on $\pi_1(M)$.  But $(\ev_0|_{M})_*$ has finite index and it agrees with the trivial map (it's isomorphic to $(\ev_1|_M)_*$), so it follows that $\pi_1(X)$ is finite.

The restriction on $H_2(X;\IQ)$ is obtained as follows.  When $M$ (as defined above) is a closed manifold (which in general can not be arranged because $\overline{\sM}_{0,2}^A(X,J)$ is hardly every a closed manifold; however, it can be arranged in the case where $A$ is $\Omega$-indecomposable), Poincar\'{e} duality, see \cref{lem:PositiveDegreeAndSurj}, gives a surjection 
\[ (\ev_0|_{M})_*: H_2(M;\IQ) \to H_2(X;\IQ). \]

In the case where $A$ is $\Omega$-indecomposable, one can also arrange for the restriction of the universal curve to $M$, $\pi|_M$, (or an appropriate replacement, see \cref{subsec:Indecomposable}) to be an oriented $S^2$-bundle over all of $M$.  So the Gysin sequence, see \cref{lem:SphereBundleProperties}, implies that for each $\Sigma \in H_2(M;\IQ)$, there exists $c \in \IQ$ such that
\[ c \cdot e_*([\mbox{Fibre}]) = (\ev_0)_*(\Sigma). \]
Here we use that $\ev_i = e \circ s_i$ and that $\ev_1|_M$ is trivial on second homology.
In essence, the difference of the push-forward of any second homology class in $H_2(M;\IQ)$ under two different sections of $\pi|_M$ is a multiple of the fibre.  The class of $e_*([\mbox{Fibre}])$ is given by the class of a map in $\overline{\sM}_{0,2}^A(X,J)$, that is, $A$.  Since $(\ev_0)_*$ is surjective on homology, it follows that every class in $H_2(X;\IQ)$ can be written as a multiple of $A$.

The issue with the above sketch of proof is that $\overline{\sM}_{0,2}^A(X,J)$ is hardly every a smooth closed oriented manifold.  In general, one has to represent it by a Kuranishi structure or represent its virtual homology class by a pseudocycle.  We have opted to use Cieliebak-Mohnke's \cite{CieliebakMohnke_HypersurfacesAndTransversality} approach to define Gromov-Witten invariants so that we can effectively replace $\overline{\sM}_{0,2}^A(X,J)$ by a pseudocycle in the above sketch.  The technical aspects of this paper revolve around altering the above arguments to the cases where the moduli spaces in question are pseudocycles.

\subsection{Outline of paper}\label{subsec:OutlineOfPaper}

In \cref{sec:ProofOfMainTheorem}, we give the proof of our main result, assuming definitions and material from later sections.  In \cref{sec:GWInvariants}, we review and study Cieliebak-Mohnke's \cite{CieliebakMohnke_HypersurfacesAndTransversality} construction of Gromov-Witten moduli spaces and Gromov-Witten invariants.  We also construct a naive, smooth compactification of the open stratum of the moduli space of stable maps that represent an indecomposable spherical class.  In \cref{sec:Pseudocycles}, we derive an intersection product for pseudocycle homology, show that it is well-defined, and establish its additional properties that we need to prove our main result.  In \cref{sec:EasyAT}, we discuss some algebraic topology lemmas that we need to prove our main result.

\subsection*{Acknowledgements}
I thank Dusa McDuff for posing the first conclusion of \cref{thm:MainCor} as a question to me.  I also thank Mohammed Abouzaid and Paul Seidel for discussions related to this paper.


\section{Proof of main result}\label{sec:ProofOfMainTheorem}

In this section, we prove \cref{thm:MainTheorem}.  First, we prove a general geometric result, \cref{prop:GeneralSetup}, that encapsulates the hypotheses of \cref{thm:MainTheorem}.  Then we show that \cref{thm:MainTheorem} follows from \cref{prop:GeneralSetup}.

\begin{notn}\label{notn:GeneralNotn}
Let $X$ be a smooth closed oriented manifold of dimension $n$.  Let $f = (f_0,f_1): \sM \to X \times X$ be a $k$-pseudocycle.  Let $\pi: \sC \to \sM$ be an oriented $S^2$-bundle.  Suppose there is a map $e: \sC \to X$ and sections $s_i: \sM \to \sC$ of $\pi$ that satisfy $f_i = e \circ s_i$.
\end{notn}

\begin{prop}\label{prop:GeneralSetup}
If $h: Z \to X$ is a $(2n-k)$-dimensional embedded submanifold such that
\[ [ f ] \cdot [ \pt \times h ] \neq 0,\footnote{Here we are denoting the intersection product of pseudocycles, see \cref{lem:IntersectingPseudocycles} and \cref{prop:IntersectionProduct}.} \]
then the image of
\[ h_*: \pi_1(Z) \to \pi_1(X) \]
is a finite index subgroup of $\pi_1(X)$.  If $\sM$ is further a closed manifold, then $H_2(X;\IQ)$ is generated by $h_*(H_2(Z;\IQ))$ and $e_*([\pi^{-1}(x)])$.
\end{prop}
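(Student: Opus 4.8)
The plan is to mimic the smooth sketch from Section \ref{subsec:IdeaOfProof}, but with all moduli spaces replaced by pseudocycles. First I would slice the pseudocycle $f = (f_0, f_1) \colon \sM \to X \times X$ against the point $\times$ $h$ insertion. Using the intersection product of pseudocycles (Lemma \ref{lem:IntersectingPseudocycles}, Proposition \ref{prop:IntersectionProduct}), the hypothesis $[f] \cdot [\pt \times h] \neq 0$ produces a $(2n - k)$-dimensional pseudocycle $g \colon N \to X$, obtained as (essentially) $f_0$ restricted to the preimage of $\{\pt\} \times Z$, together with a factorization of the ambient data: the restriction $\pi|_N \colon \sC|_N \to N$ is still an oriented $S^2$-bundle, the section $s_0$ restricts to give $g = e \circ s_0|_N$, and the other section $s_1|_N$ has image landing in the slice, so that $e \circ s_1|_N$ is a pseudocycle that factors through $Z$ (indeed through the point, up to the relevant genericity, but we only need that it factors through $h$ on $\pi_1$ and is trivial on $H_2$). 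The key point extracted from the non-vanishing is that $g$ has non-zero degree over $X$ in the pseudocycle sense.

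Next I would invoke the algebraic-topology lemmas of Section \ref{sec:EasyAT}. Since $g \colon N \to X$ is a non-zero $2n$-dimensional pseudocycle (here $2n - k = 2n$ is forced by dimension count against a codimension-$k$ insertion, wait — more precisely $g$ has dimension equal to $\dim X$), Lemma \ref{lem:PseudoFiniteIndex} gives that $\mathrm{im}(g_* \colon \pi_1(N) \to \pi_1(X))$ has finite index. On the other hand, $g_* = (s_0|_N)_*$ followed by $e_*$ on $\pi_1$; and because $\pi|_N$ is an $S^2$-bundle, the two sections $s_0|_N$ and $s_1|_N$ are fiberwise homotopic up to an obstruction living in $\pi_1(S^2) = 0$ — this is exactly the "homotopic over the $1$-skeleton" observation, which over the whole base needs the $S^2$-bundle structure to hold everywhere, which here it does by \cref{notn:GeneralNotn}. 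Hence $g_* = (e \circ s_1|_N)_*$ on $\pi_1$, and the latter factors through $h_* \colon \pi_1(Z) \to \pi_1(X)$. Since a finite-index subgroup of $\pi_1(X)$ factors through $h_*$, the image of $h_*$ has finite index. This is the first conclusion.

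For the second conclusion, assume $\sM$ — hence $N$ — is a closed manifold, so $g$ is an honest map from a closed oriented $2n$-manifold of non-zero degree. Lemma \ref{lem:PositiveDegreeAndSurj} then gives surjectivity of $g_* \colon H_2(N;\IQ) \to H_2(X;\IQ)$. Now use the Gysin sequence for the oriented $S^2$-bundle $\pi|_N$ (Lemma \ref{lem:SphereBundleProperties}): for any $\Sigma \in H_2(N;\IQ)$, the difference $(s_0|_N)_*\Sigma - (s_1|_N)_*\Sigma \in H_2(\sC|_N;\IQ)$ is a rational multiple of the fiber class $[\pi^{-1}(x)]$, since sections differ by a class pulled back from the base plus a fiber multiple, and the two pullbacks agree. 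Applying $e_*$ and using that $e \circ s_1|_N$ is trivial on $H_2(\cdot;\IQ)$ (it factors through the point, up to genericity — or at worst through $h_*(H_2(Z;\IQ))$, which suffices), we get $g_*\Sigma = c \cdot e_*([\pi^{-1}(x)])$ modulo $h_*(H_2(Z;\IQ))$ for some $c \in \IQ$. By surjectivity of $g_*$, every class of $H_2(X;\IQ)$ lies in the span of $e_*([\pi^{-1}(x)])$ and $h_*(H_2(Z;\IQ))$.

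The main obstacle I anticipate is the first step: making rigorous the passage from the pseudocycle intersection hypothesis to a well-behaved sliced pseudocycle $g \colon N \to X$ that simultaneously (a) has non-zero degree, (b) carries the restricted $S^2$-bundle and sections, and (c) has $e \circ s_1|_N$ controlled on $\pi_1$ and $H_2$ by $h$. Transversality of the slice must be arranged so that $N$ is a manifold with the correct boundary/stratified behavior of a pseudocycle, and one must be careful that the fiber product of pseudocycles genuinely inherits the bundle structure of \cref{notn:GeneralNotn} — this is where the deferred material on the pseudocycle intersection product in Section \ref{sec:Pseudocycles} does the real work. The second, "closed manifold" conclusion is comparatively soft once the Gysin-sequence bookkeeping is set up, since there the pseudocycle subtleties evaporate.
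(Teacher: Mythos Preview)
Your plan is essentially the paper's proof: slice $f$ to produce an $n$-pseudocycle $g\colon M\to X$, invoke \cref{lem:PseudoFiniteIndex} for finite index, use the $S^2$-bundle to identify $(e\circ s_0|_M)_*$ with $(e\circ s_1|_M)_*$ on $\pi_1$, and for the second part apply \cref{lem:PositiveDegreeAndSurj} plus the Gysin sequence \cref{lem:SphereBundleProperties} exactly as you outline.

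One point to sharpen, since it is the only place your description does not match what actually works: you write $g$ as ``$f_0$ restricted to the preimage of $\{\pt\}\times Z$'', but that fiber product is $0$-dimensional, not $n$-dimensional (your parenthetical self-correction already senses this). The paper instead slices $f$ against $\Ione\times(\phi\circ h)$ for a generic $\phi\in\Diff_0^+(X)$, obtaining
\[
g = pr_0\circ\bigl(f\cdot(\Ione\times\phi\circ h)\bigr),\qquad
\tilde g = pr_1\circ\bigl(f\cdot(\Ione\times\phi\circ h)\bigr),
\]
and then transfers the hypothesis to $[g]\cdot[\pt]\neq 0$ via the K\"unneth-type identity \cref{prop:KunnethForPseudocycles}. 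A consequence is that $\tilde g = e\circ s_1|_M$ factors through $h$ (not through a point); this is precisely what your hedge ``at worst through $h_*(H_2(Z;\IQ))$'' anticipates, and it is the version needed for both conclusions. With that adjustment your outline is the paper's argument.
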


\begin{proof}
We begin by proving the first claim about fundamental groups.  By \cref{lem:FactorTransverseImpliesIoneTransverse}, \cref{lem:FactorTransversality}, and \cref{lem:IntersectingPseudocycles}, there exists a diffeomorphism $\phi \in \Diff_0^+(X)$ such that
\[ g: M \to X, \gap g = pr_0 \circ (f \cdot (\Ione \times \phi \circ h))\]
is an $n$-pseudocycle.  We claim that the image of
\[ g_*: \pi_1(M) \to \pi_1(X) \]
is a finite index subgroup of $\pi_1(X)$.  Indeed, by \cref{prop:KunnethForPseudocycles},
\[ [g] \cdot [pt] = [f] \cdot [ pt \times h ] \neq 0. \]
So by \cref{prop:IntersectionProduct}, $[g] \in \sH_n(X)$ is non-zero.  So the claim follows from \cref{lem:PseudoFiniteIndex}.

Since $\Ione \times (\phi \circ h)$ is injective, by \cref{lem:IntersectingPseudocycles}, $M$ naturally sits inside $\sM$.  So we may restrict the $S^2$-bundle over $\sM$ to $M$, $\pi|_M: \sC|_M \to M$.  The homotopy long exact sequence for a fibre bundle implies
\begin{enumerate}
	\item $\pi_*: \pi_1(\sC|_M) \to \pi_1(M)$ is an isomorphism and
	\item the $(s_i|_M)_*: \pi_1(M) \to \pi_1(\sC|_M)$ are isomorphic for $i = 0,1$.
\end{enumerate}
Consider
\[ \widetilde{g}: M \to X, \gap \wt{g} = pr_1 \circ (f \cdot (\Ione \times \phi \circ h)).\]
Since $e \circ s_i = f_i$, the above discussion implies that
\[ g_*: \pi_1(M) \to \pi_1(X) \gap \mbox{and} \gap \wt{g}_*: \pi_1(M) \to \pi_1(X) \]
are isomorphic.  In particular, $\Ima(\wt{g}_*) \subset \pi_1(X)$ is a finite index subgroup.  But
\[ \Ima(\wt{g}_*) \subset \Ima((\phi \circ h)_*) = \Ima(h_*) \subset \pi_1(X). \]
So $\Ima(h_*)$ is a finite index subgroup of $\pi_1(X)$, as desired.

Now we prove the second conclusion.  Assume that $\sM$ is closed.  In this case, $M$ is a closed manifold, and, like above, the class of $g$ in $H_n(X;\IZ)$ is non-zero.  So by \cref{lem:PositiveDegreeAndSurj},
\[ g_*: H_2(M;\IQ) \to H_2(X;\IQ) \]
is surjective.  As above, $\pi|_{M}$ is an oriented $S^2$-bundle.  \cref{lem:SphereBundleProperties} gives an exact sequence
\[ \xymatrix{ H_2(S^2;\IQ) \ar[r]^{i_x} & H_2(\sC|_M;\IQ) \ar[r]^\pi & H_2(M;\IQ) \ar[r] & 0. } \]
Fix some homology class $B \in H_2(X;\IQ)$.  There exists $\wt{B} \in H_2(M;\IQ)$ so that $g_*(\wt{B}) = B$.  By the exact sequence,
\[ q \cdot (i_x)_*([S^2]) = -(s_1)_*(\wt{B}) + (s_0)_*(\wt{B}) \]
for some $q \in \IQ$.  Since $e \circ s_i = f_i$, it follows that
\[ g_*(\wt{B}) = e_* \circ (s_0)_*(\wt{B}) = e_*(q \cdot (i_x)_*([S^2])) + e_* \circ (s_1)_*(\wt{B}).\]
But $e_* \circ (s_1)_*(\wt{B})$ lies in the image of $h_*$.  So the desired result follows.
\end{proof}

We now give the proof of \cref{thm:MainTheorem}.

\begin{proof}[Proof of \cref{thm:MainTheorem}]
We begin with the claim about fundamental groups.
Consider a choice of Gromov-Witten pseudocycle from \cref{thm:CMTheorem},
\[ \ev_{\ell+(2+k)}(X,\Omega;\omega,\mathbf{J},Y): \sM_{\ell+(2+k)}^A(X,\Omega;\omega,\mathbf{J},Y) \to X^{2+k}.\]
By assumption and \cref{defn:GWInvariants}, there are pseudocycles $h_i: Z_i \to X$ such that
\[ [ \ev_{\ell+(2+k)}(X,\Omega;\omega,\mathbf{J},Y)] \cdot  [ \pt \times h \times h_3 \times \cdots \times h_{2+k} ] \neq 0. \]
By \cref{lem:FactorTransverseImpliesIoneTransverse} and \cref{lem:FactorTransversality}, there exists $\phi \in \Diff_0^+(X^{k})$ such that the pseudocycle $\ev_{\ell+(2+k)}(X,\Omega;\omega,\mathbf{J},Y)$ is strongly transverse to
\[ \Ione \times \Ione \times (\phi \circ ( h_3 \times \cdots \times h_{2+k})). \]
Using \cref{lem:IntersectingPseudocycles}, define $f: \sM \to X \times X$ by
\[ f = (pr_1 \times pr_2) \circ ( \ev_{\ell+(k+2)}(X,\Omega;\omega,\mathbf{J},Y) \cdot (\Ione \times \Ione \times (\phi \circ ( h_3 \times \cdots \times h_{2+k})))). \]
By \cref{prop:KunnethForPseudocycles},
\[ [f] \cdot [ pt \times h ] \neq 0. \]
Consider the oriented $S^2$-bundle from \cref{lem:CMUniversalCurve},
\[ \pi: \sC_{0,\ell+(2+k)}^A(X,\Omega;\omega,\mathbf{J},Y) \to \sM_{0,\ell+(2+k)}^A(X,\Omega;\omega,\mathbf{J},Y)\]
with its sections $s_i$ and evaluation map $e$.  Consider the oriented $S^2$-bundle
\begin{align*}
\pi \times \Ione: & \ \sC_{0,\ell+(2+k)}^A(X,\Omega;\omega,\mathbf{J},Y) \times (X \times X \times Z_3\times \cdots Z_{k+2}) 
\\ & \ \ \ \ \ \to \sM_{0,\ell+(2+k)}^A(X,\Omega;\omega,\mathbf{J},Y) \times (X \times X \times Z_3\times \cdots Z_{k+2}).
\end{align*}
It has sections $s_i \times \Ione$ and an evaluation map
\[ e \circ pr: \sC_{0,\ell+(2+k)}^A(X,\Omega;\omega,\mathbf{J},Y) \times (X \times X \times Z_1\times \cdots Z_{k+2}) \to X,\]
where $pr$ denotes the projection onto $\sC_{0,\ell+(2+k)}^A(X,\Omega;\omega,\mathbf{J},Y)$.  Since $\sM$ naturally sits inside the target of $\pi \times \Ione$ above, we may restrict this bundle, its sections, and its evaluation to $\sM$ to obtain the data described in \cref{notn:GeneralNotn}.  The first hypotheses of \cref{prop:GeneralSetup} are now satisfied.  So the first part of \cref{thm:MainTheorem} follows from the first part of \cref{prop:GeneralSetup}.

Now we prove the second conclusion of \cref{thm:MainTheorem}.  Consider a choice of Gromov-Witten pseudocycle from \cref{prop:NaiveCompactificationOfIndecomp},
\[ \widehat{\ev}: \widehat{\sM}_{0,2+k}^A(X,J) \to X^{2+k}.\]
Recall that every rational homology class can be represented as some multiple of an embedded submanifold.  So by assumption, \cref{defn:GWInvariants}, and \cref{rem:GWInvariantsAgree}, there exist submanifolds $h_i: Z_i \to X$ such that
\[ [ \widehat{\ev}] \cdot  [ \pt \times h \times h_3 \times \cdots \times h_{2+k} ] \neq 0. \]
By \cref{lem:FactorTransversality}, there exists $\phi \in \Diff_0^+(X^{k})$ such that $\widehat{\ev}$ is strongly transverse to
\[ \Ione \times \Ione \times (\phi \circ ( h_3 \times \cdots \times h_{2+k})). \]
Define
\[ f: \sM \to X \times X, \gap f = (pr_1 \times pr_2) \circ \widehat{\ev} \cdot (\Ione \times \Ione \times (\phi \circ ( h_3 \times \cdots \times h_{2+k}))). \]
In this case, $\sM$ is a closed manifold and, by \cref{prop:KunnethForPseudocycles},
\[ [f] \cdot [pt \times h] \neq 0. \]
Consider the oriented $S^2$-bundle from \cref{prop:NaiveCompactificationOfIndecomp}:
\[ \pi: \widehat{\sM}_{0,k+1}^A(X,J) \to  \widehat{\sM}_{0,k}^A(X,J). \]
Using \cref{prop:NaiveCompactificationOfIndecomp} and defining $\sC \coloneqq \pi^{-1}(\sM)$, $s_i = s_i|_{\sM}$ for $i=0,1$, and $e = pr_{k+3} \circ \widehat{\ev}$ gives the desired data from \cref{notn:GeneralNotn}.  The second hypotheses of \cref{prop:GeneralSetup} are satisfied.  So the rest of \cref{thm:MainTheorem} follows from the second part of \cref{prop:GeneralSetup}.
\end{proof}


\section{Gromov-Witten invariants}\label{sec:GWInvariants}

\subsection{Gromov-Witten invariants \'{a} la Cieliebak-Mohnke}\label{subsec:CMApproach}

We recall Cieliebak-Mohnke's \cite{CieliebakMohnke_HypersurfacesAndTransversality} construction of Gromov-Witten pseudocycles for rational symplectic manifolds and Krestiachine's \cite{Krestiachine_DonaldsonGW} extension of their construction to non-rational symplectic manifolds.

\begin{notn}
\begin{enumerate}
	\item Let $(X,\Omega)$ be a closed symplectic manifold.
	\item Let $\omega$ be a perturbation of $\Omega$ to a rational symplectic form.  So $[\omega] \in H^2(X;\IQ)$.  After rescaling, we can and do assume that $[\omega] \in H^2(X;\IZ)$.
	\item Let $J$ be an $\omega$-compatible almost complex structure.
	\item Let $Y$ denote a Donaldson hypersurface for $\omega$, that is, $PD([Y]) = N \cdot [\omega]$ for some $N\gg 0$.  Further suppose that $(Y,J)$ is a \emph{Donaldson pair}, see \cite[Definition 9.2]{CieliebakMohnke_HypersurfacesAndTransversality}.  Such pairs always exist for any fixed compatible pair $(\omega,J)$.
\end{enumerate}
\end{notn}

\begin{defn}
A  family of $\omega$-tamed almost complex structures
\[ \mathbf{J} \in C^\infty(\overline{\sM}_{0,\ell+1}, End(TM))\]
is \emph{admissible} if $\mathbf{J}$ is \emph{coherent} (see \cite[Definition 3.1]{CieliebakMohnke_HypersurfacesAndTransversality}) and for each $p \in \overline{\sM}_{0,\ell+1}$, $\mathbf{J}(p)$ leaves $TY$ invariant and $\mathbf{J}(p)$ is $C^0$-close to $J$.  Here $\overline{\sM}_{0,\ell+1}$ is the moduli space of stable genus zero curves with $\ell+1\geq 4$ ordered marked points.
\end{defn}

\begin{notn}
By forgetting the $(\ell+1)$-marked point and restricting to the open stratum $\sM_{0,\ell+1} \subset \overline{\sM}_{0,\ell+1}$, an admissible $\mathbf{J}$ encodes a ${\sM}_{0,\ell}$-family of $\IP^1$ domain dependent almost complex structures on $X$, where the $\IP^1$ dependence is parameterized by the $(\ell+1)$-marked point.  A tuple $\mathbf{z} = (z_1,\dots,z_\ell,\dots,z_{\ell+k}) \in (\IP^1)^{\ell+k} \smallsetminus \Delta_{\fat}$ determines an element $[\mathbf{z}] \in \sM_{0,\ell}$ by forgetting the last $k$-marked points.  Let $\mathbf{J}_\mathbf{z}$ denote the associated $\IP^1$-dependent almost complex structure on $X$.
\end{notn}

\begin{notn}
Let $\wt{\sM}_{0,\ell+k}^A(X,\Omega;\omega,\mathbf{J},Y)$ denote the subspace of
\[ C^\infty(\IP^1,X) \times \left( (\IP^1)^{\ell+k} \smallsetminus \Delta_{\fat} \right) \]
of pairs $(u,\mathbf{z})$ that satisfy:
\begin{enumerate}
	\item $du(z) + \mathbf{J}_{\mathbf{z}}(z,u(z)) \circ du(z) \circ j(z)$,
	\item $[u] = A \in H_2(X;\IZ)$, and
	\item $u(z_i) \in Y$ for $1 \leq i \leq \ell$.
\end{enumerate}
Define an evaluation map
\[ \wt{\ev}_{\ell+k}(X,\Omega;\omega,\mathbf{J},Y): \wt{\sM}_{0,\ell+k}^A(X,\Omega;\omega,\mathbf{J},Y) \to X^k \]
by
\[ \wt{\ev}_{\ell+k}(X,\Omega;\omega,\mathbf{J},Y)(u,z_1,\dots,z_{\ell+k}) = (u(z_{\ell+1}),\dots,u(z_{\ell+k})). \]
By \cite[Section 5]{CieliebakMohnke_HypersurfacesAndTransversality}, there is a proper free $\pgl$-action on $\wt{\sM}_{0,\ell}^A(X,\Omega;\omega,\mathbf{J},Y)$,
\[ \psi \cdot (u,z_1,\dots,z_{\ell+k}) \mapsto (u \circ \psi^{-1}, \psi(z_1),\dots,\psi(z_{\ell+k})). \]
Let $\sM_{0,\ell+k}^A(X,\Omega;\omega,\mathbf{J},Y)$ denote the quotient of $\wt{\sM}_{0,\ell+k}^A(X,\Omega;\omega,\mathbf{J},Y)$ by the $\pgl$-action and let ${\ev}_{\ell+k}(X,\Omega;\omega,\mathbf{J},Y)$ denote the induced evaluation map.
\end{notn}

The main result of \cite{CieliebakMohnke_HypersurfacesAndTransversality} is the following:

\begin{thm}\label{thm:CMTheorem}\cite[Theorem 1.1 and Theorem 1.2]{CieliebakMohnke_HypersurfacesAndTransversality}
For a generic choice of admissible $\mathbf{J}$ and $\ell \gg 0$, the evaluation map
\[ \ev_{\ell+k}(X,\Omega;\omega,\mathbf{J},Y): \sM_{0,\ell+k}^A(X,\Omega;\omega,\mathbf{J},Y) \to X^k \]
defines a pseudocycle of dimension $2n+2c_1(A) +2k-6$.
\end{thm}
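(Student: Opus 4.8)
The plan is to follow Cieliebak--Mohnke's argument from \cite{CieliebakMohnke_HypersurfacesAndTransversality} (using Krestiachine's extension \cite{Krestiachine_DonaldsonGW} when $(X,\Omega)$ is not rational); the proof has three parts, which I would carry out in order: stabilize the domains of all relevant stable maps by marking their intersections with a Donaldson divisor; achieve transversality inside the class of coherent $\IP^1$-dependent almost complex structures; and bound the dimension of the Gromov limit set of the resulting moduli space by $2n + 2c_1(A) + 2k - 8$, which is the defining condition of a pseudocycle.

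First I would set up the stabilization and the transversality scheme together. By Donaldson's theorem a Donaldson pair $(Y,J)$ with $PD([Y]) = N[\omega]$ exists once $N \gg 0$; since an admissible $\mathbf{J}$ is $C^0$-close to $J$, every stable $\mathbf{J}_{\mathbf{z}}$-holomorphic $A$-sphere meets $Y$ transversally in exactly $N\omega(A)$ points, and recording these as marked points makes every domain component stable as soon as $N\omega(A) \ge 3$ --- this is the origin of the hypothesis $\ell \gg 0$, and it excludes ghost components from the compactification. The point of allowing the almost complex structure to depend on the domain point (via an admissible $\mathbf{J}$, with the $\IP^1$-dependence carried by the extra marked point) is that multiply covered curves then no longer obstruct transversality: a nonconstant stable map is somewhere injective relative to a generic coherent $\mathbf{J}$ even when its underlying map factors through a branched cover, so the universal moduli space is regular and Sard--Smale yields a generic admissible $\mathbf{J}$ for which $\wt{\sM}^A_{0,\ell+k}(X,\Omega;\omega,\mathbf{J},Y)$ is a smooth oriented manifold. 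By \cite[Section 5]{CieliebakMohnke_HypersurfacesAndTransversality} the $\pgl$-action on it is free and proper (the $\ell \ge 3$ points on $Y$ kill reparametrizations), so the quotient is a smooth oriented manifold of dimension $2c_1(A)+2n$ (the $\bar\partial$-index on $\IP^1$) $+\,2(\ell+k)$ (the marked points) $-\,2\ell$ (the incidence conditions at $Y$) $-\,6$ (the $\pgl$-quotient), that is, $2n + 2c_1(A) + 2k - 6$, and $\ev_{\ell+k}$ is a smooth map out of it.

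Next I would control the boundary. Gromov compactness compactifies by stable $\mathbf{J}_{\mathbf{z}}$-holomorphic maps over nodal genus-zero domains, and the coherence condition \cite[Definition 3.1]{CieliebakMohnke_HypersurfacesAndTransversality} forces the domain-dependent data attached to a nodal configuration to be the gluing-compatible limit, so each nodal stratum is itself transversally cut out for generic $\mathbf{J}$. A stratum indexed by a tree with $e \ge 1$ edges then has codimension $2e \ge 2$: each additional node creates a new domain component (contributing $2n-6$ to the stratum dimension, up to the redistribution of the class $A$) together with two nodal marked points (contributing $4$), against which the matching condition at the node is a codimension-$2n$ incidence, for a net loss of $2$; the $\ell$ incidence conditions at $Y$ distribute additively over the components, $N\omega(A') + N\omega(A'') = N\omega(A)$, so they do not change the relative codimension, and ghost components do not occur because of the marking. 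Hence the Gromov limit set of $\ev_{\ell+k}$ has codimension at least two in its domain; since the image lies in the compact manifold $X^k$, this with the dimension count above is exactly the statement that $\ev_{\ell+k}$ is a pseudocycle of dimension $2n + 2c_1(A) + 2k - 6$.

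The genuinely delicate step is the last one: producing a single coherent, domain-dependent perturbation that is simultaneously compatible with Gromov limits and makes every boundary stratum transversally cut out of codimension at least two, so that nothing in the compactification escapes control. This is the technical heart of \cite{CieliebakMohnke_HypersurfacesAndTransversality}, and in a paper of this kind I would invoke their theorem (together with \cite{Krestiachine_DonaldsonGW}) rather than reprove it.
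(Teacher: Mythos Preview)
Your proposal is appropriate: the paper does not prove this theorem at all but simply cites it from \cite[Theorem 1.1 and Theorem 1.2]{CieliebakMohnke_HypersurfacesAndTransversality} (with \cite{Krestiachine_DonaldsonGW} for the non-rational case), which is exactly what you conclude you would do in your final paragraph. The sketch you give of the Cieliebak--Mohnke argument is a reasonable summary of their strategy, but none of it appears in the present paper.
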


\cref{thm:CMTheorem} allows us to give the following definition of Gromov-Witten invariants.

\begin{defn}\label{defn:GWInvariants}
Let $A \in H_2(X;\IZ)$ be a spherical homology class.  Let $A_1,\dots,A_k \in H_\bullet(X;\IZ)$ be non-torsion homology classes that satisfy:
\[ \sum_{i=1}^k 2n-\deg(A_i) = 2n+ 2c_1(A)+2k-6. \]
Suppose that $A_i$ is represented by a pseudocycle $h_i: Z_i \to X$.
Define
\[ \GW_{0,k}^A(X,\Omega;A_1,\dots,A_k) \coloneqq  \frac{1}{\ell!} \cdot \Phi([\ev_{\ell+k}(X,\Omega;\omega,\mathbf{J},Y)] \cdot [h_1 \times \cdots \times h_k]), \]
where $\ev_{\ell+k}(X,\Omega;\omega,\mathbf{J},Y)$ is a pseudocycle from \cref{thm:CMTheorem} and $\Phi$ is given in \cref{thm:ZingerIsomorphism}.
\end{defn}

By \cite[Theorem 1.2]{CieliebakMohnke_HypersurfacesAndTransversality} and \cite[Theorem B]{Krestiachine_DonaldsonGW}, \cref{defn:GWInvariants} is well-defined.  Finally, we give a type of universal curve over these pseudocycles.

\begin{lem}\label{lem:CMUniversalCurve}
There exists an oriented $S^2$-bundle
\[ \pi: \sC_{0,\ell+k}^A(X,\Omega;\omega,\mathbf{J},Y) \to \sM_{0,\ell+k}^A(X,\Omega;\omega,\mathbf{J},Y) \]
with sections
\[ s_i: \sM_{0,\ell+k}^A(X,\Omega;\omega,\mathbf{J},Y) \to \sC_{0,\ell+k}^A(X,\Omega;\omega,\mathbf{J},Y) \]
and an evaluation map
\[ e: \sC_{0,\ell+k}^A(X,\Omega;\omega,\mathbf{J},Y) \to X\]
that satisfy
\[ e \circ s_i = pr_i \circ \ev_{\ell+k}(X,\Omega;\omega,\mathbf{J},Y).\]
\end{lem}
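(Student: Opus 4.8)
The plan is to build the bundle $\sC_{0,\ell+k}^A(X,\Omega;\omega,\mathbf{J},Y)$ directly from the presentation of $\sM_{0,\ell+k}^A(X,\Omega;\omega,\mathbf{J},Y)$ as a quotient of $\wt{\sM}_{0,\ell+k}^A(X,\Omega;\omega,\mathbf{J},Y)$ by the proper free $\pgl$-action. First I would form the trivial $\IP^1$-bundle $\wt{\sM}_{0,\ell+k}^A(X,\Omega;\omega,\mathbf{J},Y) \times \IP^1$ over $\wt{\sM}_{0,\ell+k}^A(X,\Omega;\omega,\mathbf{J},Y)$, and let $\pgl$ act diagonally: by the given action on the first factor $(u,\mathbf{z}) \mapsto (u \circ \psi^{-1}, \psi(\mathbf{z}))$, and by the standard action of $\pgl$ on $\IP^1$ on the second factor. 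Since the action on the base is proper and free, so is the diagonal action on the total space, and the quotient
\[ \sC_{0,\ell+k}^A(X,\Omega;\omega,\mathbf{J},Y) \coloneqq \left(\wt{\sM}_{0,\ell+k}^A(X,\Omega;\omega,\mathbf{J},Y) \times \IP^1\right)\big/\pgl \]
is a smooth manifold fibering over $\sM_{0,\ell+k}^A(X,\Omega;\omega,\mathbf{J},Y)$ with fibre $\IP^1 \cong S^2$. This is an associated bundle to the principal $\pgl$-bundle $\wt{\sM} \to \sM$ via the action of $\pgl$ on $S^2$, hence it is a smooth fibre bundle; since $\pgl = \PGL_2(\IC)$ is connected and acts on $S^2$ preserving orientation, the bundle is canonically oriented, giving the oriented $S^2$-bundle $\pi$.

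Next I would define the sections and the evaluation map. For each $i$, the map $(u,\mathbf{z}) \mapsto ((u,\mathbf{z}), z_i)$ is $\pgl$-equivariant (both sides carry the action through $\psi$ on $\mathbf{z}$ and the $\IP^1$-factor), so it descends to a section $s_i: \sM \to \sC$; concretely $s_i$ records the $i$-th marked point in the fibre. The evaluation map is defined from $\tilde e: \wt{\sM} \times \IP^1 \to X$, $\tilde e((u,\mathbf{z}),w) = u(w)$. This is $\pgl$-invariant: $\tilde e((u\circ\psi^{-1}, \psi(\mathbf{z})), \psi(w)) = (u\circ\psi^{-1})(\psi(w)) = u(w)$, so it descends to $e: \sC \to X$. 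The compatibility $e \circ s_i = pr_i \circ \ev_{\ell+k}$ is then immediate on the level of representatives: $e(s_i([u,\mathbf{z}])) = \tilde e((u,\mathbf{z}), z_i) = u(z_i)$, which for $\ell+1 \le i \le \ell+k$ is exactly the $i$-th component of $\wt\ev_{\ell+k}$, hence of $\ev_{\ell+k}$ after passing to the quotient. (One should fix the indexing convention so that the two tautological sections $s_0, s_1$ referenced in \cref{notn:GeneralNotn} and used later correspond to two of the auxiliary marked points; I would simply single out two of the $z_{\ell+1},\dots,z_{\ell+k}$ and relabel.)

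The only genuine technical point — and the main thing to be careful about — is the smooth-manifold structure and the properness/freeness of the diagonal action on the total space. Freeness is automatic since the action is already free on the base. For properness one observes that $\wt{\sM} \times \IP^1 \to \wt{\sM}$ is $\pgl$-equivariant and $\IP^1$ is compact, so properness of the action on $\wt{\sM}$ (established in \cite[Section 5]{CieliebakMohnke_HypersurfacesAndTransversality}) pulls back to properness on $\wt{\sM} \times \IP^1$: if $K_1, K_2 \subset \wt{\sM}\times\IP^1$ are compact, then $\{\psi : \psi K_1 \cap K_2 \ne \emptyset\}$ is contained in $\{\psi : \psi\, pr_{\wt\sM}(K_1) \cap pr_{\wt\sM}(K_2) \ne \emptyset\}$, which is compact. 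Hence the quotient is a smooth manifold and $\pi$ is a smooth submersion with compact fibres, i.e. a smooth fibre bundle by Ehresmann. Everything else is formal diagram-chasing through the quotient construction. I would also remark that this bundle is nothing but the (fibrewise) universal curve over the open stratum, restricted and quotiented, so its appearance is expected; the content of the lemma is simply that the Cieliebak–Mohnke setup packages it as an honest oriented $S^2$-bundle with the stated sections and evaluation.
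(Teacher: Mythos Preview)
Your proposal is correct and follows essentially the same approach as the paper: form $\wt{\sM}\times\IP^1$, take the diagonal $\pgl$-quotient, and descend the obvious equivariant sections and evaluation. The only cosmetic difference is that the paper isolates the ``quotient of a trivial $S^2$-bundle by a proper free action with orientation-preserving fibre action is an oriented $S^2$-bundle'' step into a separate lemma (\cref{lem:QuotientS2BundleOrientation}) rather than invoking associated bundles and Ehresmann directly, and its sections are indexed so that $s_i$ picks out $z_{\ell+i}$.
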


\begin{proof}
Consider the projection $\wt{\pi}$ with sections $\wt{s}_i$:
\begin{align*}
\wt{\pi}: \wt{\sM}_{\ell+k}^A(X,\Omega;\omega,\mathbf{J},Y) \times \IP^1 \to \wt{\sM}_{\ell+k}^A(X,\Omega;\omega,\mathbf{J},Y), \\
\wt{s}_i: \wt{\sM}_{\ell+k}^A(X,\Omega;\omega,\mathbf{J},Y) \to \wt{\sM}_{\ell+k}^A(X,\Omega;\omega,\mathbf{J},Y) \times \IP^1,
\\ \wt{s}_i(u,z_1,\dots,z_{\ell+k}) = (u,z_1,\dots,z_{\ell+k},z_{\ell+i}).
\end{align*}
There is an evaluation map 
\[ \wt{e}: \wt{\sM}_{\ell+k}^A(X,\Omega;\omega,\mathbf{J},Y) \times \IP^1 \to X, \gap \wt{e}: (u,z_1,\dots,z_{\ell+k},z) = u(z). \]
By \cite[Section 5]{CieliebakMohnke_HypersurfacesAndTransversality}, the action of $\pgl$ on $\wt{\sM}_{\ell+k}^A(X,\Omega;\omega,\mathbf{J},Y)$ is proper and free.  Since $\pgl$ is connected, its action on both $\wt{\sM}_{\ell+k}^A(X,\Omega;\omega,\mathbf{J},Y)$ and $\IP^1$ is orientation preserving.  So by \cref{lem:QuotientS2BundleOrientation}, the quotient
\[\sC_{0,\ell+k}^A(X,\Omega;\omega,\mathbf{J},Y) \coloneqq \left( \wt{\sM}_{\ell+k}^A(X,\Omega;\omega,\mathbf{J},Y) \times \IP^1\right)/\pgl \]
by the diagonal action is an oriented $S^2$-bundle over $\sM_{0,\ell+k}^A(X,\Omega;\omega,\mathbf{J},Y)$.   Finally, the maps $\wt{s_i}$ and $\wt{e}$ are equivariant with respect to the $\pgl$-action.  So they descend to the desired maps $s_i$ and $e$.
\end{proof}

\subsection{Gromov-Witten invariants for indecomposable classes}\label{subsec:Indecomposable}
When the spherical homology class associated to a Gromov-Witten invariant is indecomposable, there is an alternative representation of the Gromov-Witten pseudocycle which is a closed manifold.  First, we recall the notion of an indecomposable homology class and then give the construction.

\begin{notn}
Let $(X,\Omega)$ be a closed symplectic manifold.
\end{notn}

\begin{defn}\label{defn:JIndecomposable}
Let $J$ be an $\Omega$-tamed almost complex structure.  A class $A \in H_2(X;\IZ)$ is \emph{$J$-indecomposable}, if $A$ can not be written as a sum $A_1 + A_2$, where $A_i \in H_2(X;\IZ)$ is represented by a possibly disconnected non-constant nodal genus zero $J$-holomorphic curve.
\end{defn}

By \cite[Exercise 7.1.10]{McDuffSalamon_JHolomorphicCurves}, for $A \in H_2(X;\IZ)$, the subspace of all $\Omega$-tamed almost complex structures $J$ for which $A$ is $J$-indecomposable is open.

\begin{defn}\label{defn:OmegaIndecomposable}
A class $A \in H_2(X;\IZ)$ is \emph{$\Omega$-indecomposable}, if $A$ is $J$-indecomposable for some $\Omega$-tamed almost complex structure $J$.
\end{defn}

We now construct the alternative representation of the Gromov-Witten pseudocycle.

\begin{notn}\label{notn:IndecomposableOpenStratum}
Fix an $\Omega$-indecomposable homology class $A \in H_2(X;\IZ)$ and an $\Omega$-tamed almost complex structure $J$ on $X$ for which $A$ is $J$-indecomposable.  Define the spaces
\[ \sM_{0,0}^A(X,J) \coloneqq \{ u: \IP^1 \to X \mid du(z) + J(u(z)) \circ du(z) \circ j(z) = 0,\ [u] = A \} \]
and
\[ \sM_{0,k}^A(X,J) \coloneqq \left( \sM_{0,0}^A(X,J) \times (\IP^1)^k \smallsetminus \Delta_{\fat} \right) / \pgl, \]
where $\pgl$ acts by
\[ \psi \cdot (u,z_1,\dots,z_k) \mapsto (u \circ \psi^{-1},\psi(z_1),\dots,\psi(z_k)). \]
Define
\[ \widehat{\sM}_{0,k}^A(X,J) = \left( \sM_{0,0}^A(X,J) \times (\IP^1)^k \right) / \pgl. \]
There is an evaluation map
\[ \widehat{\ev}:  \widehat{\sM}_{0,k}^A(X,J) \to X^k, \gap \widehat{\ev}([u,z_1,\dots,z_k]) = (u(z_1),\dots,u(z_k)). \]
There is a forgetful map
\[ \pi:  \widehat{\sM}_{0,k+1}^A(X,J) \to  \widehat{\sM}_{0,k}^A(X,J), \gap \pi([u,z_1,\dots,z_k,z_{k+1}]) = [u,z_1,\dots,z_k]. \]
There are sections of $\pi$
\[ s_i : \widehat{\sM}_{0,k}^A(X,J) \to \widehat{\sM}_{0,k+1}^A(X,J), \gap s_i([u,z_1,\dots,z_k]) = [u,z_1,\dots,z_k,z_i]. \]
Notice that $pr_{k+1} \circ \widehat{\ev} \circ s_i = pr_{i} \circ \widehat{\ev}$.
\end{notn}

\begin{rem}
One can take the stable maps compactification of $\sM_{0,k}^A(X,J)$, denoted $\overline{\sM}_{0,k}^A(X,J)$, and show that it is a smooth closed oriented manifold for some regular $J$; however, for our purposes, it is convenient to use the compactification $\widehat{\sM}_{0,k}^A(X,J)$.
\end{rem}

\begin{prop}\label{prop:NaiveCompactificationOfIndecomp}
For a generic choice of $\Omega$-tamed almost complex structure $J$ as in \cref{notn:IndecomposableOpenStratum}:
\begin{enumerate}
	\item The space $\widehat{\sM}_{0,k}^A(X,J)$ is a smooth closed oriented manifold of dimension
	\[ 2n+2k+2c_1(A)-6. \]
	\item The maps $\pi$, $s_{i}$, and $\widehat{\ev}$ are smooth.
	\item The map $\pi$ defines an oriented $S^2$-bundle over $\widehat{\sM}_{0,k}^A(X,J)$.
\end{enumerate}
Finally, the class of $\widehat{\ev}_*[\widehat{\sM}_{0,k}^A(X,J)]$
is independent of the choice of generic $J$ and agrees with the class of the pseudocycle
\[ \ev: \sM_{0,k}^A(X,J) \to X^k, \gap {\ev}([u,z_1,\dots,z_k]) = (u(z_1),\dots,u(z_k)).\footnote{This defines a pseudocycle by \cite[Lemma 7.1.8]{McDuffSalamon_JHolomorphicCurves}.} \]
\end{prop}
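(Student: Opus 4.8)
The plan is to establish the three enumerated transversality/smoothness statements first, and then deduce the independence and comparison statement at the end by a cobordism argument. For item (1), I would begin with the open stratum $\sM_{0,0}^A(X,J)$ of simple (hence somewhere-injective, since $A$ is $J$-indecomposable and thus not a multiple class) $J$-holomorphic spheres. Standard Fredholm theory (see \cite[Chapter 3]{McDuffSalamon_JHolomorphicCurves}) shows that for generic $J$ in the (open, nonempty) set where $A$ is $J$-indecomposable, $\sM_{0,0}^A(X,J)$ is a smooth manifold of dimension $2n + 2c_1(A) + 2$, carrying a natural orientation from the determinant line bundle of the Cauchy–Riemann operator, and that $\pgl$ acts freely and properly on $\sM_{0,0}^A(X,J) \times (\IP^1)^k \setminus \Delta_{\fat}$ by reparametrization, so the quotient $\sM_{0,k}^A(X,J)$ is a smooth oriented manifold of dimension $2n + 2c_1(A) + 2 + 2k - 6$. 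The real work is to see that adding back the fat diagonal — i.e. passing to $\widehat{\sM}_{0,k}^A(X,J) = (\sM_{0,0}^A(X,J) \times (\IP^1)^k)/\pgl$ — yields a \emph{smooth closed} manifold. Since we have not touched the map $u$, only allowed marked points to collide, there is no bubbling; the point is simply that $(\sM_{0,0}^A(X,J) \times (\IP^1)^k)/\pgl$ is a smooth compactification because $(\IP^1)^k$ is compact and the $\pgl$-action remains free and proper once we have at least three marked points with no constraint preventing their independent motion — here $k \geq 3$ is what one needs, or one argues via the three "extra" marked points implicit in $\ell \gg 0$; in the setup at hand, $\pgl$ acts freely and properly on $\sM_{0,0}^A(X,J)$ itself (by \cite[Section 5]{CieliebakMohnke_HypersurfacesAndTransversality}-type reasoning, or because a simple curve has at most finitely many automorphisms and indecomposability rules out multiply-covered behavior), so the quotient of the compact-fiber bundle $\sM_{0,0}^A(X,J) \times (\IP^1)^k \to \sM_{0,0}^A(X,J)$ descends. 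Closedness then follows from Gromov compactness together with $J$-indecomposability: any Gromov limit of a sequence in $\sM_{0,k}^A(X,J)$ is a stable genus-zero map representing $A$; if the domain degenerated, the limit would decompose $A$ into a sum of nonconstant sphere classes, contradicting indecomposability, so the limiting map has irreducible domain and lies in $\widehat{\sM}_{0,k}^A(X,J)$.

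For item (2), smoothness of $\pi$, the $s_i$, and $\widehat{\ev}$ is immediate from the description of $\widehat{\sM}_{0,k}^A(X,J)$ as a quotient of an explicit smooth manifold: $\pi$ is induced by the projection $(\IP^1)^{k+1} \to (\IP^1)^k$ forgetting the last coordinate, the $s_i$ are induced by the diagonal-type maps $(z_1,\dots,z_k) \mapsto (z_1,\dots,z_k,z_i)$, and $\widehat{\ev}$ is induced by $(u,z_1,\dots,z_k) \mapsto (u(z_1),\dots,u(z_k))$, all of which are smooth and $\pgl$-equivariant. For item (3), I would exhibit $\pi: \widehat{\sM}_{0,k+1}^A(X,J) \to \widehat{\sM}_{0,k}^A(X,J)$ as the $\pgl$-quotient of the trivial $\IP^1$-bundle $(\sM_{0,0}^A(X,J) \times (\IP^1)^k) \times \IP^1 \to \sM_{0,0}^A(X,J) \times (\IP^1)^k$, where $\pgl$ acts diagonally. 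Since $\pgl$ is connected, it acts by orientation-preserving diffeomorphisms on both the total space and $\IP^1$, so \cref{lem:QuotientS2BundleOrientation} applies verbatim (exactly as in the proof of \cref{lem:CMUniversalCurve}) and shows the quotient is an oriented $S^2$-bundle; the sections $s_i$ exhibited in (2) are genuine sections of this bundle.

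For the final comparison statement, the strategy is: both $\widehat{\ev}: \widehat{\sM}_{0,k}^A(X,J) \to X^k$ and $\ev: \sM_{0,k}^A(X,J) \to X^k$ define pseudocycles — the first by (1) and (2) (a smooth map from a closed oriented manifold is a pseudocycle, with empty omega-limit set), the second by \cite[Lemma 7.1.8]{McDuffSalamon_JHolomorphicCurves} as noted — and these two pseudocycles differ only on the fat diagonal locus, which has codimension at least $2$ in the domain (colliding two marked points on $\IP^1$ is a real-codimension-$2$ condition). Hence $\sM_{0,k}^A(X,J) \subset \widehat{\sM}_{0,k}^A(X,J)$ is the complement of a codimension-$\geq 2$ subset, so by the standard fact that a pseudocycle is unchanged up to bordism by such a modification (or more directly: $\widehat{\sM}_{0,k}^A(X,J) \setminus \sM_{0,k}^A(X,J)$ itself has dimension at most $\dim \widehat{\sM}_{0,k}^A(X,J) - 2$, so $\ev$ and $\widehat{\ev}$ represent the same pseudocycle class), their classes agree. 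Independence of the generic choice of $J$ then follows by the usual parametrized-moduli-space cobordism: for a generic path $(J_t)_{t \in [0,1]}$ with $A$ $J_t$-indecomposable throughout (possible since indecomposability is an open condition and the space of tamed $J$ with $A$ indecomposable is connected, or one reduces to the well-definedness of \cref{defn:GWInvariants} which is cited from \cite{CieliebakMohnke_HypersurfacesAndTransversality,Krestiachine_DonaldsonGW}), the union of moduli spaces forms a compact oriented cobordism, giving a pseudocycle bordism between the two evaluation maps; alternatively, and most economically, one invokes \cref{thm:CMTheorem} and \cref{defn:GWInvariants}: since $\ev: \sM_{0,k}^A(X,J) \to X^k$ represents (a normalization of) the same Gromov–Witten pseudocycle as $\ev_{\ell+k}(X,\Omega;\omega,\mathbf{J},Y)$ whenever both are defined, and the latter's class is already known to be independent of all choices, so is $\widehat{\ev}_*[\widehat{\sM}_{0,k}^A(X,J)]$.

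\medskip

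\noindent\textbf{Main obstacle.} The delicate point is item (1): verifying that gluing the fat diagonal back in produces a genuinely \emph{smooth closed manifold} and that the $\pgl$-action stays free and proper there — one must be careful that a simple $J$-holomorphic sphere, with its marked points allowed to collide, still has only the identity automorphism commuting with the data, and that no stable-map degeneration slips in. This is precisely where $J$-indecomposability does the heavy lifting (ruling out multiple covers and domain degenerations), and it is the one step where the naive compactification $\widehat{\sM}$ genuinely differs in character from the Cieliebak–Mohnke construction; everything else is bookkeeping parallel to \cref{lem:CMUniversalCurve} and to \cite[Section 7.1]{McDuffSalamon_JHolomorphicCurves}.
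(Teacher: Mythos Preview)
Your proposal is correct and follows essentially the same route as the paper: generic transversality for simple spheres gives smoothness of $\sM_{0,0}^A(X,J)$, freeness and properness of the $\pgl$-action on $\sM_{0,0}^A(X,J)$ (hence on the product with $(\IP^1)^k$) yields a smooth oriented quotient via the Slice Theorem, Gromov compactness plus indecomposability gives closedness, \cref{lem:QuotientS2BundleOrientation} handles item (iii), and the complement of the open stratum has codimension $\geq 2$ so \cref{lem:ExcisingSubmanifoldsFromPseudocycles} identifies the two pseudocycle classes, with independence of $J$ inherited from \cite[Lemma 7.1.8]{McDuffSalamon_JHolomorphicCurves}.

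Two small corrections. First, the dimension of $\sM_{0,0}^A(X,J)$ is $2n + 2c_1(A)$, not $2n + 2c_1(A) + 2$; with this fix your count gives the stated $2n + 2k + 2c_1(A) - 6$. Second, the discussion of ``$k \geq 3$'' and ``extra marked points implicit in $\ell \gg 0$'' is a red herring: there are no auxiliary Donaldson marked points here, and freeness of the $\pgl$-action does not rely on marked points at all --- it holds already on $\sM_{0,0}^A(X,J)$ because every curve in an indecomposable class is simple (as you correctly note a few lines later). You can delete that hedging entirely.
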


\begin{proof}
Every holomorphic curve that represents an indecomposable class is simple.  So \cite[Theorem 3.1.6]{McDuffSalamon_JHolomorphicCurves} implies that for a generic choice of $J$ as in \cref{notn:IndecomposableOpenStratum}, the space $\sM_{0,0}^A(X,J) \times (\IP^1)^k$ is a smooth oriented (non-compact) manifold of dimension $2n+2c_1(A)+2k$.  By \cite[Exercise 6.1.4]{McDuffSalamon_JHolomorphicCurves}, the action of $\pgl$ on $\sM_{0,0}^A(X,J)$ is proper and free and orientation preserving since $\pgl$ is connected.  Also, the action of $\pgl$ on $(\IP^1)^k$ is orientation preserving.  So the action of $\pgl$ on the product $\sM_{0,0}^A(X,J) \times (\IP^1)^k$ is proper and free and orientation preserving on each factor.  By the Slice Theorem \cite[Proposition 2.2.2, Remark 2.2.3, and Section 4.1]{Palais_ExistenceOfSlices}, the quotient $\widehat{\sM}_{0,k}^A(X,J)$ is a smooth oriented manifold of dimension $2n+2k+2c_1(A)-6$.
The quotient is closed by Gromov compactness and the indecomposability of $A$ (cf. \cite[Proof of Lemma 7.1.8]{McDuffSalamon_JHolomorphicCurves}).  This proves item (i).

The maps $\pi$, $s_i$ and $\widehat{\ev}$ are induced from smooth $\pgl$-equivariant maps on the product $\sM_{0,0}^A(X,J) \times (\IP^1)^k$.  By the Slice Theorem \cite[Proposition 2.2.2, Remark 2.2.3, and Section 4.1]
{Palais_ExistenceOfSlices}, the quotient projection
\[ \sM_{0,0}^A(X,J) \times (\IP^1)^k \to \widehat{\sM}_{0,k}^A(X,J) \]
is a submersion.  So by \cite[Theorem 4.2.9]{Lee_IntroductionToSmoothManifolds}, the above maps descend to smooth maps on $\widehat{\sM}_{0,k}^A(X,J)$.  This proves item (ii).  By \cref{lem:QuotientS2BundleOrientation} and our observations, $\pi$ defines an oriented $S^2$-bundle, proving item (iii).

To prove the final conclusion notice that $\sM_{0,k}^A(X,J)$ is an open subset of $\widehat{\sM}_{0,k}^A(X,J)$ and its complement is covered by smooth, closed manifolds of dimensions less than or equal to $2n+2k+2c_1(A)-8$.
By \cref{lem:ExcisingSubmanifoldsFromPseudocycles}, $\widehat{\ev}$ and $\ev$ define the same pseudocycles.  By \cite[Lemma 7.1.8]{McDuffSalamon_JHolomorphicCurves}, the pseudocycle $\ev$ is independent of the choice of $J$.  So $\widehat{\ev}$ is also independent of the choice of $J$.  This completes the proof of the proposition.
\end{proof}

\begin{rem}\label{rem:GWInvariantsAgree}
By \cite[Remark 1.6]{CieliebakMohnke_HypersurfacesAndTransversality}, the pseudocycle $\widehat{\ev}$ from \cref{prop:NaiveCompactificationOfIndecomp} can be used instead of a pseudocycle $\ev_{\ell+k}(X,\Omega;\omega,\mathbf{J},Y)$ from \cref{defn:GWInvariants} to compute the Gromov-Witten invariants of $X$ for the an $\Omega$-indecomposable class $A$.
\end{rem}


\section{Pseudocycles and the intersection product}\label{sec:Pseudocycles}

The goal of this section is to define an intersection product for pseudocycle homology and establish some of its basic properties that we will need to prove our main result.  Throughout this entire section, we fix the following notation.

\begin{notn}
Let $X$ be a smooth manifold.
\end{notn}

\subsection{Definitions and the singular homology isomorphism}
We review the definition of a pseudocycle and the isomorphism relating pseudocycle homology with singular homology.

\begin{defn}\label{defn:Pseudocycle}
A smooth map $f: M \to X$ is a \emph{$k$-pseudocycle} if
\begin{enumerate}
	\item $M$ is a smooth oriented $k$-dimensional manifold,
	\item $\overline{f(M)}$ is compact, and
	\item the dimension of
	\[ \Bd(f) \coloneqq  \bigcap_{\begin{matrix} K \subset M \\ \mbox{compact} \end{matrix}} \overline{f(M \smallsetminus K)} \]
	is at most $k-2$, that is, $\Bd(f)$ is covered by the image of a smooth map $g: W \to X$, where $W$ is a smooth manifold of dimension $k-2$.
\end{enumerate}
Two $k$-pseudocycles $f_0: M_0 \to X$ and $f_1: M_1 \to X$ are \emph{equivalent} if there exists a smooth map $F: B \to X$ that satisfies:
\begin{enumerate}
	\item $B$ is a smooth oriented $(k+1)$-dimensional manifold with boundary,
	\item $\overline{F(B)}$ is compact,
	\item the dimension of $\Bd(F)$ is at most $k-1$,
	\item $\partial B = M_1 - M_0$, and
	\item $F|_{M_i} = f_i$ for $i=0,1$.
\end{enumerate}
We write $f_0 \sim f_1$ to denote an equivalence and write $[f]$ to denote an equivalence class.
\end{defn}

The equivalence classes of $k$-pseudocycles define a $\IZ$-module under disjoint unions, denoted $\sH_k(X)$.  Inverses are given by reversing orientations of pseudocycles.

\begin{thm}\label{thm:ZingerIsomorphism}\cite[Theorem 1.1]{Zinger_PseudocyclesAndIntegralHomology}
There is a natural isomorphism
\[ \Phi_\bullet: \sH_\bullet(X) \to H_\bullet(X;\IZ).\]
\end{thm}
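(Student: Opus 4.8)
This is \cite[Theorem 1.1]{Zinger_PseudocyclesAndIntegralHomology}. The plan is to construct $\Phi_\bullet$ by hand and prove it bijective, the guiding principle being that a $k$-pseudocycle $f : M \to X$ carries exactly the data of a smooth singular $k$-cycle whose ``singular set'' has been pushed into codimension at least two --- this is precisely the content of the condition $\dim \Bd(f) \le k-2$ in \cref{defn:Pseudocycle} --- and that a pseudocycle bordism plays the same role for a bounding $(k+1)$-chain. So I would move back and forth between smooth singular chains and geometric (pseudo)manifold representatives.

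To construct $\Phi_k$, given a $k$-pseudocycle $f : M \to X$ I would exhaust $M$ by open subsets $M_1 \subset M_2 \subset \cdots$ with compact closure and smooth boundary. Since $\Bd(f)$ is covered by the image of a manifold of dimension $\le k-2$, for $i \gg 0$ the map $f|_{\partial\overline{M_i}}$ takes values in an arbitrarily small neighborhood of $\Bd(f)$, and a dimension count against this $(k-2)$-dimensional set shows that the smooth singular $(k-1)$-cycle $(f|_{\partial\overline{M_i}})_*[\partial\overline{M_i}]$ bounds a smooth $k$-chain $\tau$ supported near $\Bd(f)$. Pushing forward the fundamental chain of a smooth triangulation of $\overline{M_i}$ gives a smooth $k$-chain $\sigma_i$ with $\partial\sigma_i = (f|_{\partial\overline{M_i}})_*[\partial\overline{M_i}]$, so $\sigma_i - \tau$ is a cycle, and I would set $\Phi_k([f]) \coloneqq [\sigma_i - \tau]$. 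I would then check that this is independent of $i$, of the triangulations, and of $\tau$ (two choices of $\tau$ differ by the boundary of a $(k+1)$-chain, again by a dimension count against $\Bd(f)$), that it descends to equivalence classes by running the same argument over a pseudocycle bordism, and that disjoint union corresponds to addition; naturality for smooth maps is then immediate.

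For surjectivity I would represent a class of $H_k(X;\IZ)$ by a smooth singular cycle $z = \sum n_j \sigma_j$ and form the $k$-dimensional $\Delta$-complex obtained by gluing $|n_j|$ copies of $\Delta^k$ along the faces identified in $z$; because $z$ is a cycle, its codimension-one faces occur in cancelling pairs, and smoothing along these and along the lower strata produces a smooth oriented $k$-manifold $M$ whose complement in the complex is covered by the image of a manifold of dimension $\le k-2$, hence a $k$-pseudocycle $f'$ with $\Phi_k([f']) = [z]$. For injectivity I would, given $\Phi_k([f]) = 0$, write $\sigma_i - \tau = \partial w$ for a smooth singular $(k+1)$-chain $w$ and apply the same geometric realization and smoothing to $w$, absorbing the contributions of $\tau$ and of the triangulation (whose supports lie near $\Bd(f)$, so they only enlarge the boundary-singular locus) to obtain a pseudocycle bordism $F : B \to X$ with $\partial B = M$.

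I expect the main obstacle to be the geometric-realization-and-smoothing step in both directions: turning a smooth singular chain, cycle, or bounding chain into a (pseudo)manifold while keeping \emph{quantitative} control on the dimension of the non-manifold locus and on orientations. Making this precise needs careful simplicial approximation and general position, so that after the cancelling codimension-one faces are glued the remaining bad set really is covered by the image of a manifold of dimension $\le k-2$ (or $\le k-1$ in the bordism case). The well-definedness of $\Phi_k$ relies on the complementary technical input, namely repeatedly producing the capping chains $\tau$ by analyzing $(k-1)$- and $k$-chains that live near the $(k-2)$-dimensional set $\Bd(f)$.
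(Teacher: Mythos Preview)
The paper does not prove this theorem; it is quoted from Zinger, and only the \emph{construction} of $\Phi$ is spelled out (\cref{con:ZingerMap}, resting on \cref{prop:ZingerNeighborhood} and \cref{lem:ZingerCover}). So there is no full proof here to compare against, only the definition of the map.

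Your construction of $\Phi_k$ is morally the same as Zinger's but packaged differently. Zinger (and the paper, in \cref{con:ZingerMap}) avoids your explicit ``capping chain'' $\tau$ by passing to relative homology: one first proves (\cref{prop:ZingerNeighborhood}) that $\Bd(f)$ sits in an open set $U\subset X$ with $H_\bullet(U;\IZ)=0$ for $\bullet>k-2$, then chooses a compact codimension-zero $M_0\subset M$ with $\partial M_0\subset f^{-1}(U)$ and $M\smallsetminus f^{-1}(U)\subset M_0$, and defines $\Phi([f])$ as the image of $[M_0,\partial M_0]$ under
\[
H_k(M_0,\partial M_0;\IZ)\xrightarrow{\ f\ }H_k(X,U;\IZ)\xleftarrow{\ \cong\ }H_k(X;\IZ).
\]
The existence of your $\tau$ and its uniqueness up to a boundary are precisely the vanishing of $H_{k-1}(U)$ and $H_k(U)$, so the two constructions agree; the relative-homology formulation simply makes the independence of all choices (the exhaustion, the covering $g$, the neighborhood) a routine diagram chase rather than a chain-level argument.

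For bijectivity your outline is the right shape and is what Zinger does, but you have correctly flagged where the actual content lies. Producing a $k$-pseudocycle from a smooth singular $k$-cycle is not a soft ``dimension count'': one has to glue the simplices along their cancelling codimension-one faces, smooth the result, and then exhibit the remaining non-manifold locus as the image of a genuine $(k-2)$-manifold. Zinger does this by an explicit inductive construction on the skeleta, with concrete manifold models for neighborhoods of the lower strata; the injectivity argument needs the same machinery one dimension up. Your proposal names these steps but does not carry them out, so as it stands it is a reasonable plan rather than a proof.
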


To define the map $\Phi$ in \cref{thm:ZingerIsomorphism}, one uses the following observation due to Zinger.

\begin{prop}\label{prop:ZingerNeighborhood}\cite[Proposition 2.2]{Zinger_PseudocyclesAndIntegralHomology}
Let $W$ be a smooth $k$-dimensional manifold and let $g: W \to X$ be a smooth map.  There is an open subset $U \subset X$ such that $g(W) \subset U$ and
\[ H_\bullet(U;\IZ) = 0 \gap \mbox{for} \gap \bullet > \dim(W). \]
\end{prop}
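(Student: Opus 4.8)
The plan is to take $U$ to be the complement in $X$ of a suitable ``dual skeleton'' of a triangulation of $X$ that has been put into general position with respect to $g$; such a complement is, up to homotopy, a $k$-dimensional complex, which forces the required vanishing. Write $n=\dim X$. If $k=\dim W\ge n$ there is nothing to prove: every smooth manifold is homotopy equivalent to a CW complex of its own dimension (via a smooth triangulation, or a Morse function), so $H_\bullet(X;\IZ)=0$ for $\bullet>n\ge k$ and one may take $U=X$. Assume henceforth $k<n$.

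First I would fix a smooth triangulation $\tau$ of $X$ (which exists since $X$ is a smooth manifold) with $k$-skeleton $X^{(k)}$, pass to the barycentric subdivision $\tau'$, and form the associated dual cell decomposition. Let $L\subset X$ be the union of the dual cells of dimension $\le n-k-1$, equivalently the union of the dual cells of those simplices of $\tau$ of dimension $\ge k+1$; this is a closed subpolyhedron of $X$, in fact a subcomplex of $\tau'$. Here I would invoke the classical fact from combinatorial topology that the complement of this dual skeleton deformation retracts onto the complementary skeleton of $\tau$, namely $X\setminus L$ deformation retracts onto $X^{(k)}$. Since $X^{(k)}$ is a complex of dimension $\le k$, this gives $H_\bullet(X\setminus L;\IZ)=0$ for $\bullet>k$, and the same holds for the image of $X\setminus L$ under any diffeomorphism of $X$.

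It then remains to arrange that $g(W)$ is disjoint from $L$; crucially one must move $L$ rather than $g$, since the conclusion requires $g(W)\subset U$ literally. By a standard parametric transversality argument — Thom's transversality theorem applied to the evaluation $\Diff(X)\times W\to X$, $(\phi,w)\mapsto\phi^{-1}(g(w))$, which is a submersion — there is a diffeomorphism $\phi$ of $X$ such that $g$ is transverse to $\phi(\delta)$ for every open simplex $\delta$ of $\tau'$ with $\delta\subset L$. (The triangulation is locally finite, so this is a countable list of conditions and applies even when $W$ is noncompact, after exhausting $W$ by compact subsets.) For such a $\delta$ one has $\dim\delta\le n-k-1$, so transversality makes $g^{-1}(\phi(\delta))$ a submanifold of $W$ of dimension $k+\dim\delta-n<0$, hence empty; taking the union over all such $\delta$ gives $g(W)\cap\phi(L)=\emptyset$. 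Setting $U\coloneqq X\setminus\phi(L)=\phi(X\setminus L)$, this $U$ is open, contains $g(W)$, and is diffeomorphic to $X\setminus L$, hence deformation retracts onto a complex of dimension $\le k$. Therefore $H_\bullet(U;\IZ)=0$ for $\bullet>\dim W$, as required.

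The main obstacle is the transversality step: because $U$ must literally contain $g(W)$, one cannot perturb $g$, and the dual skeleton $L$ is only piecewise smooth, so the transversality must either be carried out PL-ly for the pair $(g,L)$ or, as above, reduced to transversality against the genuinely smooth open simplices of $\tau'$. A secondary (and purely expository) point is the appeal to the classical deformation retraction of the complement of the dual skeleton; if one wishes to avoid citing it, it can be constructed directly, collapsing skeleton by skeleton each dual cell of dimension $>n-k-1$ toward the face of $\tau$ it is dual to.
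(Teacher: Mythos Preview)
Your argument is correct and is a genuinely different route from the one the paper takes. The paper (following Zinger) does not touch the ambient triangulation of $X$ at all: instead it invokes \cref{lem:ZingerCover} to cover $g(W)$ by $k+1$ open subsets $U_0,\dots,U_k\subset X$ whose pairwise and higher intersections are disjoint unions of contractibles, and then a straightforward induction with Mayer--Vietoris shows that $U=\bigcup_i U_i$ has $H_\bullet(U;\IZ)=0$ for $\bullet>k$. By contrast you fix a triangulation of $X$, remove a (generically positioned) dual $(n-k-1)$-skeleton $L$, and use the classical fact that $X\smallsetminus L$ retracts onto the $k$-skeleton. Your approach is more geometric and produces an explicit homotopy model for $U$; the paper's approach sidesteps the transversality step entirely (the cover is built from a triangulation of $W$, not of $X$, so nothing needs to be put in general position with $g$). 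It is worth noting that later in the paper, in the proof of \cref{lem:PseudoFiniteIndex}, the specific good-cover structure of \cref{lem:ZingerCover} is reused to show that the \emph{preimage} of $U$ in a covering space has the same homological vanishing; your construction would also lift (the preimage of $\phi(L)$ under a covering map is again a polyhedron of dimension $\le n-k-1$, and the retraction lifts), so either approach feeds into that argument. The one place where your write-up is informal is the appeal to Thom transversality with $\Diff(X)$ as parameter space; in practice one replaces this by a sufficiently rich finite-dimensional family of diffeomorphisms (or, as you suggest, argues PL-ly), but you flag this yourself and it is a routine technicality, not a gap.
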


\cref{prop:ZingerNeighborhood} follows from \cref{lem:ZingerCover} and an inductive argument using Mayer-Vietoris.

\begin{lem}\label{lem:ZingerCover}\cite[Proof of Lemma 2.4]{Zinger_PseudocyclesAndIntegralHomology}
Let $W$ be a smooth $k$-dimensional manifold and let $g: W \to X$ be a smooth map.  For $0 \leq i \leq k$, there exist open subsets $U_i \subset X$ such that each intersection
\[ U_{i_1} \cap \cdots \cap U_{i_\ell} \gap \mbox{for} \gap 0 \leq i_j \leq k \]
is a disjoint union of contractible open subsets and $g(W) \subset \cup_i U_i$.
\end{lem}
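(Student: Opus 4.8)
The final statement is Lemma~\ref{lem:ZingerCover}: given a smooth $k$-dimensional manifold $W$ and a smooth map $g:W\to X$, there exist open sets $U_0,\dots,U_k\subset X$ whose every finite intersection is a disjoint union of contractible open sets, and such that $g(W)\subset\bigcup_i U_i$.

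Let me think about how to prove this.

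The key construction: we want to produce a "good cover" of a neighborhood of $g(W)$ by $k+1$ open sets, where "good" means all finite intersections are disjoint unions of contractibles. This is a classical trick — the number $k+1$ comes from the fact that $W$ is $k$-dimensional, so it has a triangulation (or a handle decomposition, or a good cover by $k+1$ sets arising from the nerve being $\le k$-dimensional... actually the standard fact is that any $n$-manifold admits an open cover by $n+1$ sets, by taking a triangulation and grouping open stars by the dimension of the simplex they're the star of).

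Here's the plan:

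First, triangulate $W$ (smooth manifolds are triangulable). For each simplex $\sigma$ of the triangulation, let $\mathrm{st}(\sigma)$ be... hmm, actually the standard construction: for each vertex $v$, take the open star $\mathrm{St}(v)$. Color the vertices with $k+1$ colors so that... no wait, vertices of a triangulated $k$-manifold aren't $(k+1)$-colorable in general.

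The right construction: pass to the barycentric subdivision $W'$ of the triangulation. Vertices of $W'$ are barycenters of simplices of $W$, and each such vertex has a well-defined dimension $d(\hat\sigma) = \dim\sigma \in \{0,1,\dots,k\}$. This gives a $(k+1)$-coloring of the vertices of $W'$ such that any simplex of $W'$ uses each color at most once (a simplex of $W'$ corresponds to a flag $\sigma_0 \subsetneq \sigma_1 \subsetneq \cdots$, so the dimensions are strictly increasing, hence distinct). Now for $i = 0,\dots,k$ let $V_i \subset W$ be the union of the open stars (in $W'$) of all vertices of color $i$. Then:
\begin{itemize}
\end{itemize}
Each $V_i$ is a disjoint union of open stars of vertices of the same color; open stars are contractible; and two open stars $\mathrm{St}(\hat\sigma), \mathrm{St}(\hat\tau)$ of distinct vertices of the same color are disjoint (if they met, the two vertices would lie in a common simplex, contradiction). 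So $V_i$ is a disjoint union of contractibles. Moreover a finite intersection $V_{i_1}\cap\cdots\cap V_{i_\ell}$ is a disjoint union over choices of one vertex of each color of the intersection of their open stars, and such an intersection is either empty or again a single open star (of the simplex spanned by those vertices, when they do span one), hence contractible. So $\{V_i\}_{i=0}^k$ is a good cover of $W$ with the desired combinatorics.

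Second, I need to transport this from $W$ to $X$. Thicken: since $g:W\to X$ is smooth, I want open sets $U_i\subset X$ with $g^{-1}(U_i) \supset V_i$ (or at least $\bigcup U_i \supset g(W)$) and the same intersection property in $X$. The naive idea "$U_i$ = a tubular-like neighborhood of $g(V_i)$" doesn't obviously preserve contractibility of intersections.

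Here's the fix, which is really Zinger's approach: work locally in $X$. Cover $X$ by convex (geodesically convex, or Euclidean-chart-convex) open balls using a Riemannian metric, so that $X$ itself has a good cover $\{B_\alpha\}$. Pull this back: $\{g^{-1}(B_\alpha)\}$ is an open cover of $W$. Refine the triangulation of $W$ so that it is \emph{subordinate} to $\{g^{-1}(B_\alpha)\}$, i.e. the closed star of every vertex of $W'$ maps under $g$ into some single $B_\alpha$. Now for each vertex $\hat\sigma$ of $W'$, instead of taking $\mathrm{St}(\hat\sigma)\subset W$, take a small open set $O_{\hat\sigma}\subset X$: a convex neighborhood in $X$ of $g(\overline{\mathrm{St}(\hat\sigma)})$, chosen inside the chosen $B_\alpha$, and chosen small enough (via a shrinking/normality argument) that $O_{\hat\sigma}\cap O_{\hat\tau}\ne\emptyset$ only if $\overline{\mathrm{St}(\hat\sigma)}\cap\overline{\mathrm{St}(\hat\tau)}\ne\emptyset$ in $W$ — equivalently only if $\hat\sigma,\hat\tau$ span a simplex of $W'$. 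One gets these by a standard argument: the closed stars whose closures are disjoint can be separated in $X$ by shrinking, using compactness of $g(\overline W)$ locally (or: do it simplex by simplex over a locally finite triangulation). Being convex subsets of a convex chart $B_\alpha$, all the $O_{\hat\sigma}$ and their finite intersections are either empty or convex, hence contractible. Finally set $U_i = \bigcup\{O_{\hat\sigma} : \hat\sigma \text{ has color } i\}$. Distinct same-color vertices don't span a simplex, so their $O$'s are disjoint; hence $U_i$ is a disjoint union of contractibles, and any finite intersection $U_{i_1}\cap\cdots$ is a disjoint union over flags of nonempty convex (hence contractible) sets. And $g(W) = \bigcup_{\hat\sigma} g(\mathrm{St}(\hat\sigma)) \subset \bigcup_{\hat\sigma} O_{\hat\sigma} = \bigcup_i U_i$. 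Done.

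**Main obstacle.** The only subtle point is the shrinking step: ensuring the intersection pattern of the convex sets $O_{\hat\sigma}$ in $X$ matches exactly the intersection pattern (nerve) of the stars $\mathrm{St}(\hat\sigma)$ in $W'$ — no "accidental" intersections in $X$ between sets whose stars are disjoint in $W$. When $g(\overline W)$ is compact this is immediate by uniform separation; in general one must use local finiteness of the triangulation plus paracompactness of $X$ to shrink the $O_{\hat\sigma}$ inductively (or all at once via a shrinking lemma) so that the finitely-many-at-a-time constraints are all satisfiable. This is exactly the content Zinger handles in \cite[Proof of Lemma 2.4]{Zinger_PseudocyclesAndIntegralHomology}, and the cleanest route is to reduce to that reference rather than reproving it; but the triangulation-plus-barycentric-subdivision-coloring argument above is the conceptual skeleton, and the convexity of charts is what makes "contractible intersections" automatic.
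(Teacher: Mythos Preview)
The paper does not supply its own proof of this lemma; it simply cites \cite[Proof of Lemma~2.4]{Zinger_PseudocyclesAndIntegralHomology}. Your sketch is essentially Zinger's argument---barycentric subdivision, the $(k+1)$-coloring of vertices by the dimension of the original simplex, then pushing forward to $X$ via small convex neighborhoods and a nerve-matching shrinking step---and it is correct.

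One small technical point worth tightening: when you argue that finite intersections of the $O_{\hat\sigma}$ are contractible, you should take the $O_{\hat\sigma}$ to be \emph{geodesically} convex in $X$ with respect to a fixed Riemannian metric, rather than ``convex in the chart $B_\alpha$''. Two vertices $\hat\sigma,\hat\tau$ spanning a simplex of $W'$ may have their closed stars assigned to different balls $B_{\alpha(\hat\sigma)}$ and $B_{\alpha(\hat\tau)}$, so chart-convexity in possibly different charts does not immediately give convexity of $O_{\hat\sigma}\cap O_{\hat\tau}$. With geodesic convexity (available for sufficiently small balls on any Riemannian manifold) the intersection of any collection of the $O_{\hat\sigma}$ is automatically geodesically convex and hence contractible, and the rest of your argument goes through verbatim.
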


\begin{con}\label{con:ZingerMap}\cite[Lemma 3.5]{Zinger_PseudocyclesAndIntegralHomology}
Given a $k$-pseudocycle $f: M \to X$, the element $\Phi(f) \in H_k(X;\IZ)$ is defined as follows.  Suppose that $g: W \to X$ covers $\Bd(f)$.   By \cref{prop:ZingerNeighborhood}, there exists an open neighborhood $U$ of $\Ima(g) \subset X$ such that $H_\bullet(U;\IZ) = 0$ for $\bullet > k-2$.  Define $V = f^{-1}(U)$.  Fix a compact codimension zero submanifold with boundary $M_0 \subset M$ such that $\partial M_0 \subset V$ and $M \smallsetminus V \subset M_0$.  Orient $M_0$ with respect to the orientation on $M$.  The map $f$ induces a map of pairs $f: (M_0,\partial M_0) \to (X,U)$.  The homology class associated to the pseudocycle $f$ is the image of the (relative) fundamental class of $(M_0,\partial M_0)$ under the composition
\[ \xymatrix{ H_k(M_0,\partial M_0;\IZ) \ar[r]^f & H_k(X,U;\IZ) & H_k(X;\IZ), \ar[l]_{\cong}} \]
where the right isomorphism follows from the vanishing of $H_\bullet(U;\IZ)$ for $\bullet> k-2$.
\end{con}

The following fact will be used implicitly throughout the next subsection.

\begin{lem}\cite[Lemma 6.5.9]{McDuffSalamon_JHolomorphicCurves}
If $f: M \to X$ is a pseudocycle and $\psi: X \to Y$ is a smooth map of manifolds, then $\psi \circ f$ is a pseudocycle.  Moreover, if $f_0$ is equivalent to $f_1$, then $\psi \circ f_0$ is equivalent to $\psi \circ f_1$.\qed
\end{lem}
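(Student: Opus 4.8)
The plan is to check the defining conditions of a pseudocycle for $\psi \circ f$ one at a time, and then to check the conditions for a bordism for $\psi \circ F$ when $F$ witnesses an equivalence; all of the content is a single point-set observation about how the ``boundary'' operator $\Bd(\cdot)$ interacts with a continuous map. For $K \subset M$ compact, write $C_K \coloneqq \overline{f(M \smallsetminus K)}$; since $X$ is Hausdorff and $\overline{f(M)}$ is compact, each $C_K$ is a closed subset of $\overline{f(M)}$, hence compact, and the family $\{C_K\}$ is downward directed because the union of two compact subsets of $M$ is compact. The lemma I would record first has two halves, valid for any continuous map $\psi$ between manifolds: (a) $\overline{\psi(S)} \subset \psi(\overline{S})$ whenever $\overline{S}$ is compact, because then $\psi(\overline{S})$ is compact, hence closed, and contains $\psi(S)$; and (b) $\bigcap_\alpha \psi(C_\alpha) \subset \psi\big(\bigcap_\alpha C_\alpha\big)$ for any downward-directed family $\{C_\alpha\}$ of compact sets. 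For (b), given $y$ in the left-hand side, the sets $\psi^{-1}(y) \cap C_\alpha$ are nonempty (as $y \in \psi(C_\alpha)$), compact (as $Y$ is Hausdorff, $\psi^{-1}(y)$ is closed, so the intersection is a closed subset of the compact set $C_\alpha$), and downward directed, so by the finite intersection property $\psi^{-1}(y) \cap \bigcap_\alpha C_\alpha \neq \emptyset$, i.e.\ $y \in \psi(\bigcap_\alpha C_\alpha)$.

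Granting the lemma, the three conditions of \cref{defn:Pseudocycle} for $\psi \circ f$ follow quickly. The domain $M$ is unchanged, giving condition (i), and $\psi \circ f$ is smooth as a composite of smooth maps. For condition (ii), $\overline{(\psi\circ f)(M)} \subset \psi(\overline{f(M)})$ by (a), and a closed subset of the compact set $\psi(\overline{f(M)})$ is compact. For condition (iii), part (a) applied to $S = f(M \smallsetminus K)$ gives $\overline{\psi(f(M\smallsetminus K))} \subset \psi(C_K)$, so intersecting over $K$ and using (b) together with the definition $\Bd(f) = \bigcap_K C_K$,
\[ \Bd(\psi \circ f) \;=\; \bigcap_K \overline{\psi(f(M \smallsetminus K))} \;\subset\; \bigcap_K \psi(C_K) \;\subset\; \psi(\Bd(f)); \]
if $g: W \to X$ is a smooth map from a $(k-2)$-manifold covering $\Bd(f)$, then $\psi \circ g: W \to Y$ is a smooth map from a $(k-2)$-manifold whose image covers $\Bd(\psi \circ f)$, so $\dim \Bd(\psi\circ f) \le k-2$.

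For the statement about equivalences, let $F: B \to X$ realize $f_0 \sim f_1$ as in \cref{defn:Pseudocycle}. Running exactly the same three checks on $\psi \circ F: B \to Y$ shows: $B$ is still an oriented $(k+1)$-manifold with boundary; $\overline{(\psi\circ F)(B)}$ is compact by the argument for (ii); $\Bd(\psi \circ F) \subset \psi(\Bd(F))$ has dimension at most $k-1$ by the argument for (iii); and $\partial B = M_1 - M_0$ and $(\psi\circ F)|_{M_i} = \psi \circ f_i$ are inherited verbatim. Hence $\psi \circ f_0 \sim \psi \circ f_1$. The only step requiring any care is inclusion (b) above — it genuinely uses compactness of the $C_\alpha$ and Hausdorffness of $Y$, and is false for continuous maps of arbitrary spaces — but beyond that the proof is a formal unwinding of the definitions together with transport of the covering maps $g$ and $F$ across $\psi$.
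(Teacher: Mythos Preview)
Your proof is correct and complete. The paper does not supply its own argument for this lemma: the statement is quoted from McDuff--Salamon and marked with \qed, so there is nothing to compare against beyond noting that your write-up is exactly the standard verification one would expect. The only nontrivial point is the inclusion $\Bd(\psi\circ f)\subset \psi(\Bd(f))$, and your directed-family/finite-intersection-property argument handles it cleanly.
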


Finally, we remark that removing a codimension two submanifold from a pseudocycle does not change the equivalence class.

\begin{lem}\label{lem:ExcisingSubmanifoldsFromPseudocycles}
If $f: M \to X$ is a $k$-pseudocycle and $N \subset M$ is a submanifold of codimension at least two, then $f|_{M \smallsetminus N}: M \smallsetminus N \to X$ is a pseudocycle that is equivalent to $f$.
\end{lem}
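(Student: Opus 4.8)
The plan is to verify the three defining properties of a $k$-pseudocycle for $f|_{M \smallsetminus N}$ and then to build an explicit equivalence. Since $N$ has codimension at least two in $M$, the complement $M \smallsetminus N$ is an open (hence smooth) submanifold of $M$ of the same dimension $k$, and it inherits an orientation; this takes care of the first property. For the second, $\overline{f(M \smallsetminus N)} \subseteq \overline{f(M)}$, which is compact, so compactness of the closure is automatic. For the third, I would observe that $\mathrm{Bd}(f|_{M\smallsetminus N})$ is the intersection over compact subsets $K \subseteq M \smallsetminus N$ of $\overline{f((M\smallsetminus N)\smallsetminus K)}$. The key point is that any such set contains $\mathrm{Bd}(f)$ plus, at worst, the image $f(N)$ of the submanifold we removed: enlarging $K$ to exhaust $M \smallsetminus N$ can never shrink the intersection below $f(N)$, but it also never forces anything beyond $\overline{f(M\smallsetminus K')}$ for $K'$ compact in $M$. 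Making this precise, one gets $\mathrm{Bd}(f|_{M\smallsetminus N}) \subseteq \mathrm{Bd}(f) \cup \overline{f(N)}$. Since $\mathrm{Bd}(f)$ is covered by a map from a manifold of dimension $k-2$, and $N$ is a manifold of dimension at most $k-2$ (so $\overline{f(N)}$ is too, using that $f(N)$ is covered by $f|_N$ from a $(k-2)$-manifold and one can take closures into a Zinger-type neighborhood if needed), the union is covered by a map from a $(k-2)$-manifold — take the disjoint union of the two covering manifolds. Hence $f|_{M\smallsetminus N}$ is a $k$-pseudocycle.

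For the equivalence, the natural move is to use a product cobordism that is trivial on one end and collapses onto $N$ on the other, but since the maps literally agree where both are defined, the cleaner route is the following. I would set $B = (M \smallsetminus N) \times [0,1]$ with $F = f|_{M\smallsetminus N} \circ pr_1$. Then $\partial B = (M\smallsetminus N) \times \{1\} - (M \smallsetminus N)\times\{0\}$, which is not $M_1 - M_0 = (M\smallsetminus N) - M$ as required. So instead I would exhibit the equivalence more directly by noting that it suffices to find $F : B \to X$ with $\partial B = M - (M \smallsetminus N)$ and $\mathrm{Bd}(F)$ of dimension at most $k-1$: take $B = M \times [0,1]$ with a collar-type reparametrization identifying $M \times \{0\}$ with $M$ and, near $M \times \{1\}$, deleting a shrinking tubular neighborhood of $N$, so the $t=1$ slice is $M \smallsetminus N$; alternatively and most economically, take $B$ to be $M$ itself, viewed as a manifold with boundary $\partial M = \emptyset$, which does not fit Definition \ref{defn:Pseudocycle} either. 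The robust choice: let $B = M \times [0,1]$, $F = f \circ pr_1$, but excise from the $t=1$ end an open tubular neighborhood $\nu(N)$ of $N \times \{1\}$ in the interior, shrinking it to $N$ as $t \to 1$ along a collar — concretely, $B = (M \times [0,1)) \cup \big((M\smallsetminus N) \times \{1\}\big)$, smoothed near $t=1$ using a tubular neighborhood of $N$. Then $\partial B = M_1 - M_0$ with $M_1 = M \smallsetminus N$ and $M_0 = M$, $F|_{M_0} = f$, $F|_{M_1} = f|_{M\smallsetminus N}$, $\overline{F(B)} = \overline{f(M)}$ is compact, and $\mathrm{Bd}(F) \subseteq \mathrm{Bd}(f) \cup \overline{f(N)}$, which has dimension at most $k-1$ as needed (in fact at most $k-2$).

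The main obstacle I anticipate is the bookkeeping for $\mathrm{Bd}(F)$ and for the smooth structure on the "open-at-$t=1$" cobordism $B$: one must be careful that deleting a tubular neighborhood of $N$ that collapses onto $N$ produces a genuine smooth manifold with boundary, and that its end contributes only $\overline{f(N)}$ (dimension $\le k-2$) to $\mathrm{Bd}(F)$ rather than something of full dimension. This is routine — it is exactly the kind of "thicken a codimension-$\ge 2$ subset and slide it off to infinity" argument that appears in McDuff–Salamon's treatment of pseudocycles — but it requires writing down the collar and tubular-neighborhood identifications explicitly. Once that is in place, verifying the five conditions of the equivalence relation in Definition \ref{defn:Pseudocycle} is immediate, and the lemma follows.
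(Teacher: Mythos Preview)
Your argument for the first part (that $f|_{M\smallsetminus N}$ is a $k$-pseudocycle) is correct and matches the paper exactly: the inclusion $\Bd(f|_{M\smallsetminus N}) \subset \Bd(f) \cup f(N)$ is the key observation, and the disjoint union of covering manifolds works.

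For the equivalence, after several false starts you land on $B = (M \times [0,1)) \cup \big((M\smallsetminus N) \times \{1\}\big)$, which set-theoretically is $(M \times I) \smallsetminus (N \times \{1\})$. This is precisely the paper's construction (the paper writes $B = I \times M \smallsetminus \{0\} \times N$, with the roles of the two ends swapped). Your subsequent worry --- that this $B$ needs ``smoothing near $t=1$ using a tubular neighborhood of $N$'' and that one must be careful about the smooth structure --- is misplaced: $(M \times I) \smallsetminus (N \times \{1\})$ is an \emph{open} subset of the manifold-with-boundary $M \times I$, hence automatically a smooth manifold-with-boundary, with $\partial B = (M \times \{0\}) \sqcup \big((M \smallsetminus N) \times \{1\}\big)$. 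No collars, no shrinking tubular neighborhoods, no smoothing. The map $F(t,p) = f(p)$ then visibly satisfies all five conditions, with $\Bd(F) \subset \Bd(f) \cup f(N)$ of dimension at most $k-2 \le k-1$. So your proposal is correct, and ultimately the same as the paper's, but the detour through collapsing tubular neighborhoods and the anticipated ``main obstacle'' are artifacts of not recognizing that removing a subset of the boundary is already a clean operation.
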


\begin{proof}
To see that $f|_{M \smallsetminus N}$ is a pseudocycle, notice that $\Bd(f|_{M \smallsetminus N}) \subset \Bd(f) \cup f(N)$ and $\overline{f|_{M \smallsetminus N}(M \smallsetminus N)} \subset \overline{f(M)}$.  So $f|_{M \smallsetminus N}$ is precompact and its boundary has dimension at most $k-2$.  So it is a pseudocycle.  To obtain the equivalence, define $B = I \times M \smallsetminus \{0\} \times N$ and
\[ F: B \to X, \gap F(t,p) = f(p). \]
Notice that $\Bd(F) \subset \Bd(f) \cup f(N)$.
So the dimension of $\Bd(F)$ is at most $k-1$.  The remaining conditions for $F$ to define an equivalence from $f$ to $f|_{M \smallsetminus N}$ are clear.
\end{proof}

\subsection{Intersections of pseudocycles}

The goal of this subsection is to define an intersection product for pseudocycles and show that it is well-defined.  We begin by fixing an appropriate notion of transversality for pseudocycles.

\begin{defn}
Two pseudocycles $f_0: M_0 \to X$ and $f_1: M_1 \to X$ are \emph{strongly transverse} if there exist coverings $g_i: W_i \to X$ of the $\Bd(f_i)$ such that  
\begin{enumerate}
	\item $f_0$ is transverse to $f_1$,\footnote{Recall, that two maps $h_i: N_i \to X$ for $i = 0,1$ are transverse, if for each pair of points $(q_0,q_i) \in N_0 \times N_1$ such that $h_0(q_0) = x = h_1(q_1)$, the operator $d(h_0)_{q_0} + d(h_1)_{q_1}$ surjects onto $T_xX$.}
	\item $g_0$ is transverse to $g_1$,
	\item $f_0$ is transverse to $g_1$, and
	\item $g_0$ is transverse to $f_1$.
\end{enumerate}
\end{defn}

We can take the intersection of strongly transverse pseudocycles.

\begin{lem}\label{lem:IntersectingPseudocycles}
If $f_i: M_i \to X$ are strongly transverse $k_i$-pseudocycles, then
\[ (f_0 \cdot f_1) \coloneqq f: M \to X, \gap M \coloneqq \left\{ (p_0,p_1) \in M_0 \times M_1 \mid f_0(p_0) = f_1(p_1) \right\} \]
given by $f \coloneqq f_i \circ pr_i$ is a $(k_0+k_1-\dim(X))$-pseudocycle.
\end{lem}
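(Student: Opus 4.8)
The plan is to verify the three defining conditions of a $(k_0+k_1-\dim X)$-pseudocycle for $f = f_i \circ pr_i$ on the fibre product $M$, using strong transversality to control the smooth structure and the boundary dimension.

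First I would establish that $M$ is a smooth oriented manifold of dimension $k_0 + k_1 - \dim X$. Since $f_0$ is transverse to $f_1$, the fibre product $M = \{(p_0,p_1) : f_0(p_0) = f_1(p_1)\}$ is a smooth submanifold of $M_0 \times M_1$: indeed $M = (f_0 \times f_1)^{-1}(\Delta_X)$ where $\Delta_X \subset X \times X$ is the diagonal, and transversality of $f_0$ and $f_1$ is exactly transversality of $f_0 \times f_1$ to $\Delta_X$; the codimension of $M$ in $M_0 \times M_1$ equals $\mathrm{codim}(\Delta_X) = \dim X$, giving $\dim M = k_0 + k_1 - \dim X$. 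The orientation is the standard fibre-product orientation induced from the orientations of $M_0$, $M_1$, and $X$ (this is where orientability of $X$ is used implicitly, though the paper only assumes $X$ smooth; presumably $X$ is oriented in the intended application, or one orients via the normal bundle of $\Delta_X$). Smoothness of $f = f_0 \circ pr_0 = f_1 \circ pr_1$ on $M$ is then immediate.

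Next I would check precompactness: $\overline{f(M)} \subset \overline{f_0(M_0)} \cap \overline{f_1(M_1)}$, and since each $\overline{f_i(M_i)}$ is compact, so is $\overline{f(M)}$ (it is a closed subset of a compact set). The heart of the argument is bounding $\dim \Bd(f) \le \dim M - 2$. Here I would show that
\[ \Bd(f) \subseteq \big(\Bd(f_0) \cap \overline{f_1(M_1)}\big) \cup \big(\overline{f_0(M_0)} \cap \Bd(f_1)\big), \]
or more precisely that $\Bd(f)$ is covered by the images of the three fibre products $g_0 \cdot g_1$, $g_0 \cdot f_1$, and $f_0 \cdot g_1$ formed from the coverings $g_i : W_i \to X$ of $\Bd(f_i)$. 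This is exactly why strong transversality was imposed: conditions (ii)–(iv) guarantee that each of these three fibre products is a smooth manifold, of dimension at most $(k_i - 2) + k_j - \dim X = \dim M - 2$ in the mixed cases and $(k_0-2)+(k_1-2)-\dim X = \dim M - 4$ in the $g_0 \cdot g_1$ case. The inclusion itself follows from a diagonal/escaping-to-infinity argument: if $(p_0^{(m)}, p_1^{(m)})$ is a sequence in $M$ leaving every compact set, then after passing to a subsequence at least one coordinate leaves every compact set in its factor, and $f_i(p_i^{(m)})$ then limits into $\Bd(f_i)$ while the other factor's image limits into $\overline{f_j(M_j)}$; one must be slightly careful to argue that the limit point is covered by one of the fibre-product maps of the coverings, using that $\overline{f_j(M_j)} \cap \Ima(g_i)$ is contained in the image of the relevant fibre product (by transversality conditions (iii),(iv)) — this uses properness of the maps restricted near the relevant compact sets.

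**Main obstacle.** The delicate point is the boundary estimate: one must show not merely that $\Bd(f)$ lies in the expected union of closed sets, but that it is genuinely covered by a map from a manifold of dimension $\le \dim M - 2$. The subtlety is that $\overline{f_1(M_1)}$ is not itself the image of a manifold of controlled dimension, so one cannot naively take a product of $g_0 : W_0 \to X$ with all of $M_1$. The resolution is that near any point of $\Bd(f_0) \cap \overline{f_1(M_1)}$ one can, after shrinking, work with a compact piece of $M_1$ (since points of $M_1$ mapping near a fixed compact set either stay in a compact subset of $M_1$ or escape, in which case their images limit into $\Bd(f_1)$ and we are in the $g_0 \cdot g_1$ case instead), and the transversality of $g_0$ to $f_1$ makes $W_0 \times_X M_1$ a manifold of dimension $(k_0 - 2) + k_1 - \dim X = \dim M - 2$ whose image covers that part of $\Bd(f)$. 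Assembling these three covering manifolds into a single map from a $(\dim M - 2)$-manifold covering all of $\Bd(f)$ completes the verification. I expect this gluing/covering bookkeeping, rather than any single estimate, to be the part requiring the most care.
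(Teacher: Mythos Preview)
Your proposal is correct and follows the same route as the paper: verify $M$ is smooth oriented of the right dimension via transversality, check precompactness, and cover $\Bd(f)$ by the three fibre products $W_0\times_X M_1$, $M_0\times_X W_1$, $W_0\times_X W_1$.

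The one place you make life harder than necessary is your ``main obstacle.'' There is no subtlety about compact pieces, shrinking, or gluing bookkeeping. Once you observe (as the paper does, via exhausting compact sets $K_j = M\cap(K_{0,j}\times K_{1,j})$) that
\[
\Bd(f)\subset \bigl(\Bd(f_0)\cap f_1(M_1)\bigr)\cup\bigl(f_0(M_0)\cap\Bd(f_1)\bigr)\cup\bigl(\Bd(f_0)\cap\Bd(f_1)\bigr),
\]
the covering is immediate: any point $x\in\Bd(f_0)\cap f_1(M_1)$ satisfies $x=g_0(w_0)=f_1(p_1)$ for some $(w_0,p_1)\in W_0\times_X M_1$, so the image of that fibre product already contains the entire set. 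You never need to cover anything involving $\overline{f_1(M_1)}$ as such; the decomposition $\overline{f_1(M_1)}=f_1(M_1)\cup\Bd(f_1)$ sends the closure part into the $\Bd(f_0)\cap\Bd(f_1)$ piece, handled by $W_0\times_X W_1$. The disjoint union of the three fibre products, mapped by the obvious evaluations, is a single smooth map from a manifold of dimension $\le k_0+k_1-\dim X-2$ covering $\Bd(f)$, with no further argument needed.
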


\begin{proof}
We verify the conditions of \cref{defn:Pseudocycle}.  First, by transversality, $M$ is a smooth oriented manifold of dimension $k_0+k_1-\dim(X)$ and the map $f$ is smooth.  Second,
\[ \overline{f(M)} \subset \overline{f_0(M_0)} \cup \overline{f_1(M_1)}.\]
So $\overline{f(M)}$ is compact.  Third, we claim that
\[ \Bd(f) \subset \left( \Bd(f_0) \cap f_1(M_1) \right) \bigcup \left(f_0(M_0) \cap \Bd(f_1) \right) \bigcup \left( \Bd(f_0) \cap \Bd(f_1) \right). \]
To see this, let $K_{i,j}$ for $j \in \IN$ be a sequence of compact subsets that exhaust $M_i$.  Then
\[ K_j = M \cap (K_{0,j} \times K_{1,j}) \]
is a sequence of compact subsets of $M$ that exhaust $M$.  Since
\[ M \smallsetminus K_j = M \cap \left( \left( (M_0 \smallsetminus K_{0,j}) \times M_1 \right) \bigcup \left( M_0 \times ( M_1 \smallsetminus K_{1,j}) \right) \right), \]
the above claim follows.  By transversality, the following subsets are smooth manifolds of dimensions less than or equal to $k_0+k_1-\dim(X)-2$:
\begin{align*}
Z_{01} & = \{ (w_0,w_1) \in W_0 \times W_1 \mid g_0(w_0) = g_1(w_1) \},
\\ Z_{1} & = \{ (p_0,w_1) \in M_0 \times W_1 \mid f_0(p_0) = g_1(w_1) \}, \mbox{ and} 
\\ Z_{0} & = \{ (w_0,p_1) \in W_0 \times M_1 \mid g_0(p_0) = f_1(w_1) \}.
\end{align*}
Moreover, their images under $g_0 \circ pr_0$, $f_0 \circ pr_0$, and $g_0 \circ pr_0$ respectively cover $\Bd(f)$.  So the dimension of $\Bd(f)$ is at most $k_0+k_1-\dim(X)-2$.  This completes the verification of the conditions of \cref{defn:Pseudocycle}.
\end{proof}

Intersecting strongly transverse pseudocycles induces an intersection product on $\sH_\bullet(X)$:

\begin{prop}\label{prop:IntersectionProduct}
There exists a well-defined \emph{intersection product} on $\sH_\bullet(X)$ given by the $\IZ$-bilinear map
\[ (-) \cdot (-): \sH_k(X) \times \sH_\ell(X) \to \sH_{k+\ell- \dim(X)}(X), \gap [f_0] \cdot [f_1] \mapsto [\widetilde{f_0} \cdot \widetilde{f_1}], \]
where $\widetilde{f_i} \in [f_i]$ and $\widetilde{f_0}$ is strongly transverse to $\widetilde{f_1}$.
\end{prop}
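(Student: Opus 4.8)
The plan is to establish three facts: (i) every pair of classes in $\sH_\bullet(X)$ admits strongly transverse representatives; (ii) the class $[\wt{f_0}\cdot\wt{f_1}]$ is independent of the choice of strongly transverse representatives $\wt{f_i}\in[f_i]$; and (iii) the resulting pairing is $\IZ$-bilinear. Combined with \cref{lem:IntersectingPseudocycles}, which guarantees that $\wt{f_0}\cdot\wt{f_1}$ is a $(k+\ell-\dim X)$-pseudocycle, these three facts give the proposition.

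For (i), fix $k_i$-pseudocycles $f_i\colon M_i\to X$ with $\Bd(f_i)$ covered by $g_i\colon W_i\to X$. The evaluation $\Diff_0^+(X)\times M_0\to X$, $(\psi,p)\mapsto\psi(f_0(p))$, is a submersion, as are the three analogous evaluations of $g_0$ against $f_1$, of $f_0$ against $g_1$, and of $g_0$ against $g_1$; hence, by a parametric transversality argument of the kind underlying \cref{lem:FactorTransversality}, for generic $\psi\in\Diff_0^+(X)$ the pseudocycle $\psi\circ f_0$ is strongly transverse to $f_1$, with boundary coverings $\psi\circ g_0$ and $g_1$. Choosing an isotopy $\psi_t$ from $\Ione$ to such a $\psi$, the map $F\colon[0,1]\times M_0\to X$, $F(t,p)=\psi_t(f_0(p))$, is an equivalence from $f_0$ to $\psi\circ f_0$: its image has compact closure and $\Bd(F)\subseteq\bigcup_t\psi_t(\Bd f_0)$ is covered by $[0,1]\times W_0\to X$, a manifold of dimension $\dim M_0-1$. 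Thus $[\psi\circ f_0]=[f_0]$, proving (i). Granting (ii), claim (iii) follows at once: for one generic $\psi$ the pseudocycle $(\psi\circ f_0)\sqcup(\psi\circ\widehat f_0)$ is strongly transverse to $f_1$, the fibre product distributes over disjoint unions of oriented manifolds, and reversing the orientation of $M_0$ reverses the induced orientation of $M_0\times_X M_1$ (which is determined by those of $M_0$, $M_1$, and $X$); hence $([f_0]+[\widehat f_0])\cdot[f_1]=[f_0]\cdot[f_1]+[\widehat f_0]\cdot[f_1]$ and $(-[f_0])\cdot[f_1]=-([f_0]\cdot[f_1])$, and symmetrically in the second variable.

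For (ii) it suffices to prove the key lemma: \emph{if $a\sim a'$ are $k_0$-pseudocycles, both strongly transverse to a $k_1$-pseudocycle $b$, then $[a\cdot b]=[a'\cdot b]$.} Indeed, given $f_0\sim f_0'$ and $f_1\sim f_1'$ with $f_0$ strongly transverse to $f_1$ and $f_0'$ strongly transverse to $f_1'$, use (i) to choose $f_0''\sim f_0'$ strongly transverse to both $f_1$ and $f_1'$; then the key lemma, applied in the first slot, then the second, then the first, gives $[f_0\cdot f_1]=[f_0''\cdot f_1]=[f_0''\cdot f_1']=[f_0'\cdot f_1']$. To prove the key lemma, let $F\colon B\to X$ be an equivalence from $a$ to $a'$, so $\dim B=k_0+1$ and $\Bd(F)$ is covered by some $G\colon V\to X$ with $\dim V=k_0-1$; let $g_b\colon W_b\to X$ cover $\Bd(b)$. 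I would perturb $F$ relative to $\partial B$, and perturb $G$, so that the pair $(F,b)$ becomes strongly transverse with boundary coverings $G$ and $g_b$, while $F$ remains an equivalence from $a$ to $a'$. Since $F|_{\partial B}=a\sqcup a'$ is already strongly transverse to $b$, transversality of $(F,b)$ holds near $\partial B$, so the perturbation can be confined to the part of $\operatorname{int}(B)$ lying over a precompact neighbourhood of $\overline{b(M_b)}$ and taken $C^1$-small; one must then check that $\overline{F(B)}$ remains compact and that $\Bd(F)$ is not enlarged. With this done, $F\cdot b\colon B\times_X M_b\to X$ is defined, of dimension $(k_0+1)+k_1-\dim X$, one more than $\dim(a\cdot b)$; since $a$ and $a'$ are transverse to $b$, its boundary is $(a'\cdot b)-(a\cdot b)$, on which $F\cdot b$ restricts to $a\cdot b$ and $a'\cdot b$; and the computation in the proof of \cref{lem:IntersectingPseudocycles}, applied to $(F,b)$, shows $\Bd(F\cdot b)$ is covered by $V\times_X M_b$, $B\times_X W_b$, and $V\times_X W_b$, each of dimension at most $\dim(a\cdot b)-1$. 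So $F\cdot b$ is an equivalence and $[a\cdot b]=[a'\cdot b]$. The main obstacle is exactly this relative perturbation of $F$: carrying it out rel $\partial B$ while maintaining all four strong-transversality conditions and, above all, controlling the behaviour near the non-compact ends of $B$ --- where keeping $\overline{F(B)}$ compact and keeping $\Bd(F)$ of codimension at least two forces one to use the full strength of strong transversality of $(a,b)$ and $(a',b)$ --- is the one step that needs real care; everything else is the orientation bookkeeping and the dimension arithmetic already carried out in \cref{lem:IntersectingPseudocycles}.
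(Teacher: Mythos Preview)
Your overall architecture matches the paper's, and your reduction of well-definedness to the key lemma is fine; the gap is in how you prove that lemma. You propose to perturb the equivalence $F\colon B\to X$ rel $\partial B$, confining the perturbation to $\operatorname{int}(B)\cap F^{-1}(U)$ for a precompact $U\supset\overline{b(M_b)}$. But $F^{-1}(U)$ need not be precompact in $B$: any sequence $p_n\to\infty$ in $B$ with $F(p_n)$ accumulating on $\Bd(F)\cap\overline{b(M_b)}$ (which is typically nonempty once $k_0+k_1\geq\dim X$) lies eventually in $F^{-1}(U)$. So your perturbation reaches the non-compact ends of $B$, and there you lose control of $\Bd(F)$: a $C^0$-small change can replace it by an arbitrary subset of an $\epsilon$-neighbourhood of the original, which is no longer covered by anything $(k_0-1)$-dimensional. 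You correctly flag this as the step needing care, but the confinement you propose does not supply it.

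The paper's fix is to leave $F$ untouched and instead perturb $b$. For generic $\phi\in\Diff_0^+(X)$, all of $F$, $G$, $a$, $a'$ and their boundary coverings become simultaneously transverse to $\phi\circ b$ and $\phi\circ g_b$; since $\phi$ is a diffeomorphism of the compact target, none of the domain-side data ($B$, $V$, $\Bd(F)$, compactness of $\overline{F(B)}$) is disturbed. The fibre product of $F$ with $\phi\circ b$ then directly gives $a\cdot(\phi\circ b)\sim a'\cdot(\phi\circ b)$ (this is \cref{lem:IntersectionLemmas} \cref{lem:EquivalenceToEquivalentIntersections}). One still needs $a\cdot b\sim a\cdot(\phi\circ b)$, and this is obtained by choosing a \emph{generic diffeotopy} $\Phi\in\sP_\phi$ and taking the fibre product of $(t,p)\mapsto\Phi(t,b(p))$ with $a$ (\cref{lem:IntersectionLemmas} \cref{lem:FlowInvarianceOfIntersection}). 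Because every perturbation is by ambient diffeomorphisms of $X$, the $\Bd$ of each resulting cobordism is automatically covered by the original coverings post-composed with the diffeotopy---exactly the control your direct perturbation of $F$ cannot guarantee.
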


To prove \cref{prop:IntersectionProduct}, we need to show that the perturbations $\widetilde{f_i}$ exist and that the equivalence class of the intersection in \cref{prop:IntersectionProduct} is independent of all choices.  This amounts to fairly standard transversality arguments.  For completeness, we give them here.  We begin with reviewing transversality for smooth maps, see \cite{GuilleminPollack_DifferentialTopology} and \cite[Proof of Lemma 6.5.5]{McDuffSalamon_JHolomorphicCurves} for more details.

\begin{notn}
Let $\Diff_0^+(X)$ denote the path component of the group of orientation preserving diffeomorphisms of $X$ that contains the identity.  Let $\sP_{\phi} \subset C^{\infty}(I,\Diff_0^+(X))$ denote the space of diffeotopies of $X$ from the identity to $\phi \in \Diff_0^+(X)$.
\end{notn}

\begin{rem}\label{rem:TransversalityForMaps}
Fix smooth maps $h_i: Z_i \to X$ for $i=0,1$.  Consider the evaluation map
\[ \ev: X \times \Diff_0^+(X) \to X, \gap \ev(x,\phi) = \phi(x). \]
The composition $\ev \circ (h_1 \times \Ione)$ is transverse to $h_0$.  So by Sard's Theorem, there is a residual subset $\sF(X;h_0,h_1) \subset \Diff_0^+(X)$ such that for $\phi \in \sF(X;h_0,h_1)$, $h_0$ and $\phi \circ h_1$ are transverse.  Similarly, consider the evaluation map
\[ \ev: I \times X \times \sP_\phi \to X, \gap \ev(t,x,\Phi) = \Phi(t,x).\]
If $h_0$ is transverse to both $h_1$ and $\phi \circ h_1$, then the composition $\ev \circ (\Ione \times h_0 \times \Ione)$ is transverse to $h_0$.  So by Sard's Theorem, there is a residual subset $\sF_\phi(X;h_0,h_1) \subset \sP_\phi$ such that for $\Phi \in \sF_\phi(X;h_0,h_1)$, $h_0$ and $\Phi \circ h_1$ are transverse.
\end{rem}

\cref{prop:IntersectionProduct} will following from \cref{lem:IntersectionLemmas}.

\begin{lem}\label{lem:IntersectionLemmas}
Consider pseudocycles $f_i: M_i \to X$ for $i=0,1,2$.
\begin{enumerate}
	\item \label{lem:GenericTransversality}There exists a residual subset $\sF^{reg}(X;f_0,f_1) \subset \Diff_0^+(X)$ such that for each $\phi \in \sF^{reg}(X;f_0,f_1)$, the pseudocycles $f_0$ and $\phi \circ f_1$ are strongly transverse.
	\item \label{lem:FlowInvariance}Given $\phi \in \Diff_0^+(X)$, the pseudocycle $\phi \circ f_i$ is equivalent to $f_i$.
	\item \label{lem:FlowInvarianceOfIntersection}Suppose that $\phi \in \Diff_0^+(X)$ such that $f_0$ and $\phi \circ f_0$ are both strongly transverse to $f_1$.  Then $f_0 \cdot f_1$ is equivalent to $(\phi \circ f_0) \cdot f_1$.
	\item \label{lem:EquivalenceToEquivalentIntersections}If $f_0$ is equivalent to $f_2$, then there exists $\phi \in \Diff_0^+(X)$ such that $f_0$ and $f_2$ are both strongly transverse to $\phi \circ f_1$ and $f_0 \cdot (\phi \circ f_1)$ is equivalent to $f_2 \cdot (\phi \circ f_1)$.
\end{enumerate}
\end{lem}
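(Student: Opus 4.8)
The plan is to obtain all four assertions from parametrized transversality, following \cite[Proof of Lemma 6.5.5]{McDuffSalamon_JHolomorphicCurves}, treating \cref{rem:TransversalityForMaps} as the input for genericity and the dimension count in the proof of \cref{lem:IntersectingPseudocycles} as the input for controlling $\Bd$. For (i), fix once and for all coverings $g_0\colon W_0\to X$ of $\Bd(f_0)$ and $g_1\colon W_1\to X$ of $\Bd(f_1)$ by manifolds of dimension $k_i-2$. For $\phi\in\Diff_0^+(X)$ the map $\phi\circ g_1$ covers $\Bd(\phi\circ f_1)=\phi(\Bd(f_1))$, so strong transversality of $f_0$ and $\phi\circ f_1$ amounts to transversality of the four pairs $(f_0,f_1)$, $(g_0,g_1)$, $(f_0,g_1)$, $(g_0,f_1)$ after postcomposing the second map with $\phi$; \cref{rem:TransversalityForMaps} gives a residual subset of $\Diff_0^+(X)$ achieving each, and I would let $\sF^{reg}(X;f_0,f_1)$ be their (still residual) intersection.

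For (ii), pick a diffeotopy $\Phi\in\sP_\phi$ — using here that $\phi$ is in the identity component, so $\sP_\phi\neq\emptyset$ — and set $B=I\times M_i$, $F(t,p)=\Phi(t,f_i(p))$. Then $\overline{F(B)}$ is compact, and since $I\times\overline{f_i(M_i)}$ is compact one gets $\Bd(F)\subseteq\Phi(I\times\Bd(f_i))$, of dimension at most $k_i-1$; as $F$ restricts to $f_i$ and $\phi\circ f_i$ on the two ends of $B$ (with boundary orientations as usual), this is the required equivalence. Part (iii) repeats this one dimension up: take $\Phi\in\sP_\phi$ and form the track $\widehat\Phi\colon I\times M_0\to X$, $\widehat\Phi(t,p)=\Phi(t,f_0(p))$, which over $\partial(I\times M_0)$ is the pair of maps $f_0$, $\phi\circ f_0$ already assumed strongly transverse to $f_1$; then perturb $\Phi$ rel endpoints so that $\widehat\Phi$, together with the covering $\Phi\circ(\Ione\times g_0)$ of $\Bd(\widehat\Phi)$, becomes transverse to $f_1$ in the sense of \cref{lem:IntersectingPseudocycles}. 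The universal map $(0,1)\times M_0\times\sP_\phi\to X$, $(t,p,\Psi)\mapsto\Psi(t,f_0(p))$, is a submersion, so this is a residual condition on $\Phi$, by the second half of \cref{rem:TransversalityForMaps}. Finally $B=\{(t,p_0,p_1)\in I\times M_0\times M_1:\Phi(t,f_0(p_0))=f_1(p_1)\}$ with $F(t,p_0,p_1)=f_1(p_1)$ is, by the bookkeeping in the proof of \cref{lem:IntersectingPseudocycles}, a cobordism whose two boundary faces are $f_0\cdot f_1$ and $(\phi\circ f_0)\cdot f_1$.

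For (iv), let $E\colon B\to X$ be an equivalence from $f_0$ to $f_2$, with $\partial B=M_2-M_0$, and fix a covering of $\Bd(E)$. As in (i), \cref{rem:TransversalityForMaps} applied to the finitely many pairs needed to make $\phi\circ f_1$ transverse to $f_0$, to $f_2$, to $E|_{\operatorname{int}B}$, and to the covering of $\Bd(E)$, and simultaneously the covering $g_1$ of $\Bd(f_1)$ transverse to each of these, produces a nonempty residual set of $\phi\in\Diff_0^+(X)$. For such $\phi$, the space $B'=\{(b,p_1)\in B\times M_1: E(b)=\phi(f_1(p_1))\}$ with $F'(b,p_1)=\phi(f_1(p_1))$ is a cobordism (again by the dimension count of \cref{lem:IntersectingPseudocycles}) whose boundary faces over $M_0$ and $M_2$ are $f_0\cdot(\phi\circ f_1)$ and $f_2\cdot(\phi\circ f_1)$; in particular such a $\phi$ makes both $f_0$ and $f_2$ strongly transverse to $\phi\circ f_1$.

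I expect the only real obstacle to lie in (iii) and (iv): one must perturb a full one-parameter object (the diffeotopy $\Phi$ in (iii), and the diffeomorphism $\phi$ coupled to the equivalence $E$ in (iv)) to gain transversality along the parameter without disturbing the transversality prescribed at the two ends — i.e.\ invoking the relative parametrized transversality theorem for manifolds with boundary — and then to check, using the computation already carried out in \cref{lem:IntersectingPseudocycles}, that every resulting $\Bd(\,\cdot\,)$ satisfies the codimension-one bound of \cref{defn:Pseudocycle}. No ingredient beyond \cref{rem:TransversalityForMaps} and \cref{lem:IntersectingPseudocycles} should be needed; everything else is the routine transversality bookkeeping the paper already alludes to.
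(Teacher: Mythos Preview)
Your plan is correct and follows the paper's own argument almost verbatim: for (i) the paper takes exactly the intersection of the four residual sets from \cref{rem:TransversalityForMaps}; for (ii) it uses the same diffeotopy bordism $F(t,p)=\phi_t(f(p))$; for (iii) it picks $\Phi$ in the intersection of the four $\sF_\phi(X;\cdot,\cdot)$ sets (your ``perturb rel endpoints'') and forms the same fiber product $B'$; and for (iv) it intersects twelve residual sets in $\Diff_0^+(X)$ (including those for the coverings $g_0,g_2$ of $\Bd(f_0),\Bd(f_2)$, which you should add to your list) and takes the fiber product of the equivalence $E$ with $\phi\circ f_1$. The $\Bd$ checks in (iii) and (iv) are exactly the transversality dimension counts from the proof of \cref{lem:IntersectingPseudocycles}, as you anticipated.
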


Assuming \cref{lem:IntersectionLemmas}, we prove \cref{prop:IntersectionProduct}.

\begin{proof}[Proof of \cref{prop:IntersectionProduct}]
Given classes $[f_i] \in \sH_{k_i}(X)$, by \cref{lem:IntersectionLemmas} \cref{lem:GenericTransversality} and \cref{lem:FlowInvariance}, there exists $\widetilde{f_0} \in [f_0]$ and $\wt{f_1} \in [f_1]$ such that $\widetilde{f_0}$ is strongly transverse to $\widetilde{f_1}$.  Define
\[ [f_0] \cdot [f_1] = [\wt{f_0} \cdot \widetilde{f_1}].\]
Suppose that $h_0 \in [f_0]$ and $h_1 \in [f_1]$ such that $h_0$ is strongly transverse to $h_1$.  We need to show that $\wt{f_0} \cdot \wt{f_1}$ is equivalent to $h_0 \cdot h_1$.  By \cref{lem:IntersectionLemmas} \cref{lem:FlowInvarianceOfIntersection} and \cref{lem:EquivalenceToEquivalentIntersections}, there exist $\phi,\psi \in \Diff_0^+(X)$ such that $\wt{f_0}$ and $h_0$ are both strongly transverse to $\phi \circ \wt{f_1}$, such that $\phi \circ \wt{f_1}$ and $h_1$ are both strongly transverse to $\psi \circ h_0$, and such that
\[ [\wt{f_0} \cdot \wt{f_1}] = [\wt{f_0} \cdot (\phi \circ \wt{f_1})] = [h_0 \cdot (\phi \circ \wt{f_1})] = [(\psi \circ h_0) \cdot (\phi \circ \wt{f_1})] = [(\psi \circ h_0) \cdot h_1] = [h_0 \cdot h_1]. \]
This completes the proof.
\end{proof}

We now prove each item in \cref{lem:IntersectionLemmas} in turn.

\begin{proof}[Proof of \cref{lem:IntersectionLemmas} \cref{lem:GenericTransversality}]
If $g_i: W_i \to X$ covers $\Bd(f_i)$, then using \cref{rem:TransversalityForMaps} set
\[ \sF^{reg}(X;f_0,f_1) \coloneqq \sF(X;f_0,f_1) \cap \sF(X;f_0,g_1) \cap \sF(X;g_0,f_1) \cap \sF(X;g_0,g_1). \]
\end{proof}

\begin{proof}[Proof of \cref{lem:IntersectionLemmas} \cref{lem:FlowInvariance}]
Suppose $g: W \to X$ covers $\Bd(f)$.  By definition, there exists a diffeotopy $\phi_t: I \times X \to X$ such that $\phi_0 = \Ione$ and $\phi_1 = \phi$.  Consider
\[ F: I \times M \to X, \gap  F(t,p) = \phi_t(f(p)) \]
and
\[ G: I \times W \to X, \gap G(t,w) = \phi_t(g(w)).\]
One can check that $G$ covers $\Bd(F)$ and that $F$ gives an equivalence between $f$ and $\phi \circ f$.
\end{proof}

\begin{proof}[Proof of \cref{lem:IntersectionLemmas} \cref{lem:FlowInvarianceOfIntersection}]
If $g_i: W_i \to X$ cover $\Bd(f_i)$, then using \cref{rem:TransversalityForMaps} fix
\[ \Phi \in \sF_\phi(X;f_1,f_0) \cap \sF_\phi(X;f_1,g_0) \cap \sF_\phi(X;g_1,f_0) \cap \sF_\phi(X;g_1,g_0). \]
Define
\[ \widetilde{F}: I \times M_0 \to X, \gap \widetilde{F}(t,p) = \Phi(t,f_0(p))\]
and
\[ \wt{G}: I \times W_0 \to X, \gap \wt{G}(t,w) = \Phi(t,g_0(w)).\]
By construction, $\wt{F}$ and $\wt{G}$ are both transverse to both $f_1$ and $g_1$.  By transversality, 
\[ B' \coloneqq \{ (t,p_0,p_1) \in I \times M_0 \times M_1 \mid \wt{F}(t,p_0) = f_1(p_1) \} \]
is a smooth oriented $(k_0 + k_1 +1 - \dim(X))$-dimension manifold with boundary such that $\partial B' = \wt{M} - M$,
where ${M}$ and $\wt{M}$ are the domains of the intersections $f_0 \cdot f_1$ and $(\phi \circ f_0) \cdot f_1$ respectively.  Moreover, $F' \coloneqq f_1 \circ pr_1: B' \to X$ is smooth and satisfies
\[ F'|_{\wt{M}} = \phi \circ f_0 \gap \mbox{and} \gap F'|_{M} = f_0. \]
Since $\Ima(F') \subset \Ima(f_1)$, $\overline{F'(B')} \subset \overline{f_1(M_1)}$ is compact.  To conclude that $F': B' \to X$ is an equivalence from $f_0 \cdot f_1$ to $(\phi \circ f_0) \cdot f_1$, we need that $\Bd(F')$ has dimension at most $k_0 + k_1 -1- \dim(X)$.  As in the proof of \cref{lem:IntersectingPseudocycles},
\[ \Bd(F') \subset \left( \Bd(\wt{F}) \cap f_1(M_1) \right) \bigcup \left(\wt{F}(I \times M_0) \cap \Bd(f_1) \right) \bigcup \left( \Bd(\wt{F}) \cap \Bd(f_1) \right). \]
Now, by transversality, the following are smooth manifolds with boundary of dimensions less than or equal to $k_0+k_1-1-\dim(X)$:
\begin{align*}
Z_{01} & = \{ (t,w_0,w_1) \in I \times W_0 \times W_1 \mid \wt{G}(t,w_0) = g_1(w_1) \},
\\ Z_{1} & = \{ (t,p_0,w_1) \in I \times M_0 \times W_1 \mid \wt{F}(t,p_0) = g_1(w_1) \}, \mbox{ and} 
\\ Z_{0} & = \{ (t,w_0,p_1) \in I \times W_0 \times M_1 \mid \wt{G}(t,w_0) = f_1(w_1) \}.
\end{align*}
Their images under the smooth mappings $\wt{G} \circ pr_0$, $\wt{F} \circ pr_0$, and $\wt{G} \circ pr_0$ respectively cover $\Bd(F')$.  So the dimension of $\Bd(F')$ is at most $k_0+k_1-1-\dim(X)$, as desired.
\end{proof}

\begin{proof}[Proof of \cref{lem:IntersectionLemmas} \cref{lem:EquivalenceToEquivalentIntersections}]
Let $F: B \to X$ be a smooth map that realizes the equivalence between $f_0$ and $f_2$.  Suppose that $G: V \to X$ covers $\Bd(F)$.  Suppose that $g_0: W_0 \to X$ and $g_2: W_2 \to X$ cover $\Bd(f_0)$ and $\Bd(f_2)$ respectively.  Using \cref{rem:TransversalityForMaps}, define
\begin{align*}
\sF \coloneqq \sF(X;F,f_1) & \cap \sF(X;F,g_1) \cap \sF(X;G,f_1) \cap \sF(X;G,g_1)
\\ & \cap \sF(X;f_0,f_1) \cap \sF(X;f_0,g_1) \cap \sF(X;g_0,f_1) \cap \sF(X;g_0,g_1)
\\ & \cap \sF(X;f_2,f_1) \cap \sF(X;f_2,g_1) \cap \sF(X;g_2,f_1) \cap \sF(X;g_2,g_1).
\end{align*}
Fix $\phi \in \sF$.  We have that $f_0$ is strongly transverse to $\phi \circ f_1$ and $f_2$ is strongly transverse to $\phi \circ f_1$.  To complete the proof, we need to show that $f_0 \cdot (\phi \circ f_1)$ is equivalent to $f_2 \cdot (\phi \circ f_1)$.  

Fix domains $M_{01}$ and $M_{21}$ for $f_0 \cdot (\phi \circ f_1)$ and $f_2 \cdot (\phi \circ f_1)$ respectively.  Define
\[ \wt{F} \coloneqq \phi \circ f_1 \circ pr_1: \wt{B} \to X, \gap \wt{B} \coloneqq \{ (b,p_1) \in B \times M_1 \mid F(b) = \phi \circ f_1(p_1) \}. \]
By transversality, $\wt{B}$ is a smooth oriented $(k_0+k_1+1 - \dim(X))$-dimensional manifold with boundary such that $\partial \wt{B} = M_{01} - M_{21}$.
Moreover,
\[ \wt{F}|_{M_{01}} = f_0 \gap \mbox{and} \gap \wt{F}|_{M_{21}} = f_2.\]
Since $\Ima(\wt{F}) \subset \Ima(\phi \circ f_1)$, $\overline{\wt{F}(\wt{B})}$ is compact.  To conclude that $\wt{F}$ gives an equivalence from $f_0 \cdot (\phi \circ f_1)$ to $f_2 \cdot (\phi \circ f_1)$, we need that $\Bd(\wt{F})$ has dimension at most $(k_0 + k_1 -1 -\dim(X))$.  To see this, one argues identically to the argument at the end of \cref{lem:IntersectionLemmas} \cref{lem:FlowInvarianceOfIntersection}.
\end{proof}

\subsection{Properties of intersections on product manifolds}

Suppose that $f_i: M_i \to X_i$ are $k_i$-pseudocycles.  The product $f_0 \times f_1: M_0 \times M_1 \to X_0 \times X_1$ is a $(k_0+k_1)$-pseudocycle.  The main property of the intersection product that we will need is the following.

\begin{prop}\label{prop:KunnethForPseudocycles}
Given pseudocycles $h_i: N_i \to X_i$ for $i = 0,1$ and a pseudocycle $f: M \to X_0 \times X_1$, the intersection product satisfies:
\[ [f] \cdot [h_0 \times h_1] = [pr_0 \circ ([f] \cdot [\Ione \times h_1])] \cdot [h_0]. \]
\end{prop}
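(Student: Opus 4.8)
The plan is to realize both sides of the claimed identity as the equivalence class of a single geometric intersection space, and then exhibit a diffeomorphism between these spaces that is compatible with the maps to a point (i.e.\ both sides ultimately compute an intersection number with $[\pt]$, or rather a $0$-pseudocycle after the appropriate dimension count). First I would unwind the definitions: by \cref{prop:IntersectionProduct} the product $[f]\cdot[h_0\times h_1]$ is computed by choosing a representative $\widetilde f$ strongly transverse to (a perturbation of) $h_0\times h_1$, and similarly $[f]\cdot[\Ione\times h_1]$ is computed after perturbing. Using \cref{lem:IntersectionLemmas} \cref{lem:GenericTransversality} and \cref{lem:FlowInvariance}, I would first arrange, by applying diffeomorphisms $\phi_0\in\Diff_0^+(X_0)$ and $\phi_1\in\Diff_0^+(X_1)$, that $f$ is simultaneously strongly transverse to $\Ione\times(\phi_1\circ h_1)$, to $(\phi_0\circ h_0)\times(\phi_1\circ h_1)$, and that the composite $pr_0\circ(f\cdot(\Ione\times(\phi_1\circ h_1)))$ is strongly transverse to $\phi_0\circ h_0$. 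Since $\phi_0\times\phi_1\in\Diff_0^+(X_0\times X_1)$ and $h_0\times(\phi_1\circ h_1)\sim h_0\times h_1$ etc., none of these perturbations change the relevant equivalence classes.

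With transversality in place, the core of the argument is a point-set identification of domains. The domain of $f\cdot((\phi_0\circ h_0)\times(\phi_1\circ h_1))$ is
\[ \{(p,a,b)\in M\times N_0\times N_1 \mid f(p)=((\phi_0 h_0)(a),(\phi_1 h_1)(b))\}, \]
while the domain of $pr_0\circ(f\cdot(\Ione\times(\phi_1\circ h_1)))$ is $M' \coloneqq \{(p,b)\in M\times N_1\mid pr_1(f(p))=(\phi_1 h_1)(b)\}$ with map $g\coloneqq pr_0\circ f\circ pr_M$, and the domain of $g\cdot(\phi_0\circ h_0)$ is $\{((p,b),a)\in M'\times N_0\mid g(p,b)=(\phi_0 h_0)(a)\}$. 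These two iterated-fiber-product descriptions are canonically the same set via $(p,a,b)\leftrightarrow((p,b),a)$; I would check this is a diffeomorphism of oriented manifolds (the orientation bookkeeping comes from the standard fact that iterated transverse fiber products are associative up to canonical orientation-preserving diffeomorphism, which follows from comparing the defining short exact sequences of tangent spaces — I would cite the orientation conventions used in \cref{lem:IntersectingPseudocycles}), and that the two maps to $X_0$ agree under this identification. Having matched the domains and the pseudocycles on the nose, the two sides of the identity are literally the same pseudocycle, hence define the same class in $\sH_\bullet(X_0)$ (where the relevant degree is $k+k_0+k_1-2\dim X_0-\dim X_1$, but the statement is degree-agnostic).

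The main obstacle I anticipate is the orientation sign: a priori the canonical diffeomorphism $(p,a,b)\leftrightarrow((p,b),a)$ could reverse orientation, which would flip the sign and break the identity. I would handle this by fixing once and for all the convention for orienting a transverse fiber product $A\times_X B$ (as in \cref{lem:IntersectingPseudocycles}) and then verifying that reassociating $M\times_{X_0\times X_1}(N_0\times N_1)$ as $(M\times_{X_1}N_1)\times_{X_0}N_0$ respects it; this is a linear-algebra computation with the exact sequences $0\to T(A\times_X B)\to TA\oplus TB\to TX\to 0$, and it is the kind of check that is routine but genuinely needs to be done (one should expect possibly a sign $(-1)^{?}$ depending on parities, and if such a sign appears the statement would need the corresponding sign — I would double-check against the paper's normalization in \cref{prop:IntersectionProduct} and \cref{lem:IntersectingPseudocycles}). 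A secondary, milder point is making sure the $\Bd$ of the intermediate pseudocycle $g=pr_0\circ(f\cdot(\Ione\times h_1))$ is controlled well enough that $g\cdot(\phi_0\circ h_0)$ is defined; but this is immediate from \cref{lem:IntersectingPseudocycles} applied twice together with the fact that smooth maps preserve pseudocycles.
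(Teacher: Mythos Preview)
Your overall strategy matches the paper's: arrange transversality so that both sides are computed by literally the same fiber product, then observe the tautological equality of pseudocycles. The paper indeed concludes with exactly your domain identification (it just says ``tautologically''), and your orientation worry, while reasonable to flag, is absorbed into the paper's standing conventions for \cref{lem:IntersectingPseudocycles}.

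There is, however, a genuine gap in your transversality step. You invoke only \cref{lem:IntersectionLemmas}~\cref{lem:GenericTransversality} to arrange that $f$ is simultaneously strongly transverse to $\Ione\times(\phi_1\circ h_1)$ and to $(\phi_0\circ h_0)\times(\phi_1\circ h_1)$ using a \emph{product} diffeomorphism $\phi_0\times\phi_1$. But that lemma only produces a residual subset of $\Diff_0^+(X_0\times X_1)$, and product diffeomorphisms are nowhere dense there, so you cannot conclude that a product perturbation suffices. The paper fills this gap with two preparatory lemmas. First, \cref{lem:FactorTransverseImpliesIoneTransverse} shows that $pr_1\circ f\pitchfork h$ implies $f\pitchfork\Ione\times h$, so a generic $\phi_1\in\Diff_0^+(X_1)$ alone gives $f\pitchfork\Ione\times(\phi_1\circ h_1)$. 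Second, and this is the real content, \cref{lem:FactorTransversality} shows that once $\phi_1$ is so chosen and $\phi_0\in\Diff_0^+(X_0)$ is chosen generically to make $pr_0\circ(f\cdot(\Ione\times(\phi_1\circ h_1)))$ strongly transverse to $\phi_0\circ h_0$, the remaining condition $f\pitchfork(\phi_0\circ h_0)\times(\phi_1\circ h_1)$ follows \emph{automatically}. That implication (item (iii) there) is a hands-on linear-algebra check on tangent spaces, not a genericity argument, and it is precisely what lets you use the same $(\phi_0,\phi_1)$ on both sides so that your domain identification applies.
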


\cref{prop:KunnethForPseudocycles} will follow from \cref{lem:FactorTransverseImpliesIoneTransverse} and \cref{lem:FactorTransversality} below.

\begin{lem}\label{lem:FactorTransverseImpliesIoneTransverse}
Suppose that $f: M \to X_0 \times X_1$ and $h: N \to X_1$ are pseudocycles.  If $pr_1 \circ f$ is strongly transverse to $h$, then $f$ is strongly transverse to $\Ione \times h$.
\end{lem}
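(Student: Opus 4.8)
The statement to prove is \cref{lem:FactorTransverseImpliesIoneTransverse}: if $f\colon M\to X_0\times X_1$ and $h\colon N\to X_1$ are pseudocycles with $pr_1\circ f$ strongly transverse to $h$, then $f$ is strongly transverse to $\Ione\times h$. The plan is to unwind the definition of strong transversality and check the four required transversality conditions one at a time, reducing each to a transversality hypothesis we already have. The key observation that makes everything work is a pointwise linear-algebra fact: a map into $X_0\times X_1$ of the form (anything, something) is transverse to a map of the form $(\Ione,\,\text{something})$ precisely when the second components are transverse, because the $X_0$-direction is automatically surjected onto by the first factor's identity component — but more precisely, since $\Ione\times h$ has $T(X_0\times X_1) = TX_0\oplus TX_1$ in its image already hitting all of $TX_0$ via the identity, only the $TX_1$ deficiency needs to be filled, which is exactly what transversality of $pr_1\circ f$ with $h$ provides.

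First I would fix coverings $g\colon W\to X_0\times X_1$ of $\Bd(f)$ and $g'\colon W'\to X_1$ of $\Bd(h)$; since $pr_1\circ f$ is strongly transverse to $h$, we may assume (shrinking/choosing compatibly) that $pr_1\circ g$, $pr_1\circ f$ are each transverse to both $h$ and $g'$. Note $\Bd(\Ione\times h) \subset X_0\times\Bd(h)$, so $\Ione\times g'\colon X_0\times W'\to X_0\times X_1$ covers it, and $\Bd(f)$ is covered by $g$ as given. Then I would verify: (i) $f$ transverse to $\Ione\times h$; (ii) $g$ transverse to $\Ione\times g'$; (iii) $f$ transverse to $\Ione\times g'$; (iv) $g$ transverse to $\Ione\times h$. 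Each has the same shape — a map to $X_0\times X_1$ versus a product map $\Ione\times(\text{something})$ — so it suffices to prove the general pointwise claim once and apply it four times.

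The general claim: if $a\colon P\to X_0\times X_1$ is smooth and $b\colon Q\to X_1$ is smooth with $pr_1\circ a$ transverse to $b$, then $a$ is transverse to $\Ione\times b\colon X_0\times Q\to X_0\times X_1$. To see this, take $p\in P$, $(x_0,q)\in X_0\times Q$ with $a(p) = (x_0, b(q))$. I must show $\operatorname{Im}(da_p) + \operatorname{Im}(d(\Ione\times b)_{(x_0,q)}) = T_{x_0}X_0\oplus T_{b(q)}X_1$. The second summand is $T_{x_0}X_0 \oplus \operatorname{Im}(db_q)$, which already contains all of $T_{x_0}X_0$. So the sum equals $T_{x_0}X_0 \oplus \big(pr_{T X_1}\operatorname{Im}(da_p) + \operatorname{Im}(db_q)\big)$, where $pr_{TX_1}\operatorname{Im}(da_p) = \operatorname{Im}(d(pr_1\circ a)_p)$. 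By the transversality hypothesis on $pr_1\circ a$ and $b$, this is all of $T_{b(q)}X_1$, giving surjectivity. Applying this with $(a,b)$ equal to $(f,h)$, $(g,g')$, $(f,g')$, $(g,h)$ respectively yields conditions (i)–(iv).

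I do not anticipate a genuine obstacle here — the content is entirely the pointwise linear-algebra lemma above, and the rest is bookkeeping with the definition of strong transversality and the observation $\Bd(\Ione\times h)\subset X_0\times\Bd(h)$. The only mild care needed is to make sure the four coverings chosen for $\Ione\times h$ (namely $\Ione\times g'$, and $g$ for the ambient boundary) are consistent with the coverings implicitly used in the hypothesis "$pr_1\circ f$ strongly transverse to $h$"; since strong transversality only asserts \emph{existence} of coverings and all transversality statements are against fixed such choices, I would simply fix those once at the start and carry them through.
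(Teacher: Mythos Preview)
Your proposal is correct and follows essentially the same approach as the paper's proof. Both arguments fix coverings of $\Bd(f)$ and $\Bd(h)$, observe that $\Ione\times g'$ covers $\Bd(\Ione\times h)$, and reduce the four required transversality checks to the same pointwise linear-algebra fact: because the differential of $\Ione\times h$ already surjects onto $T_{x_0}X_0$, only the $T_{x_1}X_1$-component needs to be filled, and that is exactly what transversality of $pr_1\circ f$ (resp.\ $pr_1\circ g$) with $h$ (resp.\ $g'$) provides. The paper carries this out for $f\pitchfork \Ione\times h$ by exhibiting an explicit preimage $(\xi,\,v_0-d(f_0)_p(\xi),\,\zeta_1)$ and declares the other three cases ``entirely analogous''; you instead isolate the pointwise statement as a general claim and apply it four times, which is the same content organized slightly more cleanly. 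Your parenthetical about choosing the covering $g$ of $\Bd(f)$ compatibly with the covering of $\Bd(pr_1\circ f)$ supplied by the hypothesis is a point the paper leaves implicit as well.
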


\begin{proof}
Suppose that $\Bd(f)$ is covered by $g: W \to X_0 \times X_1$ and $\Bd(h)$ is covered by $r: V \to X_1$.  It follows that $\Bd(\Ione \times h)$ is covered by $\Ione \times r: X_0 \times V \to X_0 \times X_1$.  We need to show that $f \pitchfork \Ione \times h$, $f \pitchfork \Ione \times r$, $g \pitchfork \Ione \times h$, and $g \pitchfork \Ione \times r$.  We will show that $f \pitchfork \Ione \times h$ (the others being entirely analogous).  

Write $f$ as $f = (f_0,f_1)$, where $f_i = pr_i \circ f$.  We need to show that for each
\[ (p,x_0,q) \in M \times X_0 \times N \gap \mbox{such that} \gap f(p) = (x_0,x_1) = (x_0,h(q)), \]
the operator
\[ (d(f_0)_p + \Ione, d(f_1)_p + dh_q): T_p M \oplus T_{x_0} X_0 \oplus T_q N \to T_{x_0}X_0 \oplus T_{x_1}X_1 \]
is surjective.  Consider a vector $(v_0,v_1) \in T_{x_0}X_0 \oplus T_{x_1}X_1$.  Since $f_1$ is transverse to $h$, there exists $\xi \in T_pM$ and $\zeta_1 \in T_q N$ such that 
\[ v_1 = d(f_1)_p(\xi) + dh_q(\zeta_1).\]
So
\[ (v_0,v_1) = (d(f_0)_p + \Ione, d(f_1)_p + dh_q)(\xi,v_0 - d(f_0)_p(\xi),\zeta_1).\]
\end{proof}

\begin{lem}\label{lem:FactorTransversality}
Given pseudocycles $h_i: N_i \to X_i$ and a pseudocycle $f: M \to X_0 \times X_1$, there exist $\phi_i \in \Diff_0^+(X_i)$ such that
\begin{enumerate}
	\item $pr_1 \circ f$ is strongly transverse to $\phi_1 \circ h_1$,
	\item $pr_0 \circ ( f \cdot (\Ione \times (\phi_1 \circ h_1)))$ is strongly transverse to $\phi_0 \circ h_0$, and
	\item $f$ is strongly transverse to $(\phi_0 \circ h_0) \times (\phi_1 \circ h_1)$.
\end{enumerate}
\end{lem}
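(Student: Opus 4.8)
The plan is to produce the diffeomorphisms $\phi_1$ and $\phi_0$ one after the other, each time invoking the generic transversality for pseudocycles established in \cref{lem:IntersectionLemmas} \cref{lem:GenericTransversality} (i.e. the residual sets $\sF^{reg}$), together with \cref{rem:TransversalityForMaps} to handle transversality against the covering maps of the boundaries. First I would apply \cref{lem:IntersectionLemmas} \cref{lem:GenericTransversality} to the pseudocycles $pr_1 \circ f: M \to X_1$ and $h_1: N_1 \to X_1$: this yields a residual set of $\phi_1 \in \Diff_0^+(X_1)$ for which $pr_1 \circ f$ and $\phi_1 \circ h_1$ are strongly transverse, giving item (i). For such a $\phi_1$, \cref{lem:FactorTransverseImpliesIoneTransverse} shows $f$ is strongly transverse to $\Ione \times (\phi_1 \circ h_1)$, so by \cref{lem:IntersectingPseudocycles} the intersection $f \cdot (\Ione \times (\phi_1 \circ h_1))$ is a well-defined pseudocycle, and hence so is its composition with $pr_0$ to $X_0$.

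Next I would apply \cref{lem:IntersectionLemmas} \cref{lem:GenericTransversality} a second time, now to the two pseudocycles $pr_0 \circ (f \cdot (\Ione \times (\phi_1 \circ h_1))): \widetilde M \to X_0$ and $h_0: N_0 \to X_0$, obtaining a residual set of $\phi_0 \in \Diff_0^+(X_0)$ for which these are strongly transverse; this gives item (ii). The only remaining point is item (iii): that $f$ is strongly transverse to $(\phi_0 \circ h_0) \times (\phi_1 \circ h_1)$ as maps into $X_0 \times X_1$. The natural approach is to observe that a point of the fibre product of $f$ with $(\phi_0 \circ h_0) \times (\phi_1 \circ h_1)$ is exactly a point $(p, q_1) \in \widetilde M$ (built from $f$ and $\Ione \times (\phi_1 \circ h_1)$) lying over a point in the image of $\phi_0 \circ h_0$; so strong transversality of $f$ to the product map should follow from combining the transversality already arranged in (i) — which controls the $X_1$-direction and identifies the manifold $\widetilde M$ — with the transversality arranged in (ii) — which controls the $X_0$-direction along $\widetilde M$. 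Concretely, I would take a point $(p, q_0, q_1) \in M \times N_0 \times N_1$ with $f(p) = (\phi_0 h_0(q_0), \phi_1 h_1(q_1))$, split a target vector $(v_0, v_1) \in T X_0 \oplus T X_1$, and chase it through: first use (i) to realize the $X_1$-component and pass to a tangent vector of $\widetilde M$, then use (ii) to realize the residual $X_0$-component; and do the analogous chases with the boundary coverings $g, r_0, r_1$ in place of $f, h_0, h_1$. Since an intersection of finitely many residual subsets of the (Baire) spaces $\Diff_0^+(X_i)$ is residual hence nonempty, suitable $\phi_0, \phi_1$ exist.

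The main obstacle I expect is item (iii), and specifically making the linear-algebra chase in the previous paragraph precise at the level of the fibre-product tangent spaces: one has to be careful that the tangent space to $\widetilde M = \{(p,q_1) : f_1(p) = \phi_1 h_1(q_1)\}$ is genuinely the kernel description coming from (i), that the differential of $pr_0 \circ (f \cdot (\Ione \times (\phi_1 \circ h_1)))$ at such a point is $d(f_0)_p$ restricted to this kernel, and that surjectivity in (ii) therefore lifts back to surjectivity of the full operator $d f_p + d(\phi_0 h_0)_{q_0} + d(\phi_1 h_1)_{q_1}$ onto $T X_0 \oplus T X_1$. This is the same style of argument as the proof of \cref{lem:FactorTransverseImpliesIoneTransverse}, just iterated once, and the four transversality conditions (against $f_1, g_1$ and against $f_0, g_0$) must each be tracked; but modulo this bookkeeping the proof is routine. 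I would also remark that strictly one should confirm $pr_0 \circ (f \cdot (\Ione \times (\phi_1 \circ h_1)))$ is indeed a pseudocycle before applying \cref{lem:IntersectionLemmas} to it, which is immediate from \cref{lem:FactorTransverseImpliesIoneTransverse} and \cref{lem:IntersectingPseudocycles} as noted above.
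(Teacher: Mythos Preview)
Your proposal is correct and follows essentially the same approach as the paper: choose $\phi_1$ by generic transversality for $pr_1\circ f$ against $h_1$, use \cref{lem:FactorTransverseImpliesIoneTransverse} to form the intersection pseudocycle, choose $\phi_0$ by generic transversality for its $pr_0$-projection against $h_0$, and then verify item (iii) by a linear-algebra chase splitting $(v_0,v_1)$ into its two components and using the kernel description of the tangent space to the fibre product. The paper carries out exactly this argument, handling $(v_0,0)$ via (ii) and the tangent space of $P=\widetilde M$, and $(*,v_1)$ via (i).
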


\begin{proof}
Consider the pseudocycle $pr_1 \circ f: M \to X_1$.  By \cref{lem:IntersectionLemmas} \cref{lem:GenericTransversality}, there exists $\phi_1 \in \Diff_0^+(X_1)$ such that $pr_1 \circ f$ is strongly transverse to $\phi_1 \circ h_1$.  This proves item (i).

By \cref{lem:FactorTransverseImpliesIoneTransverse}, $f$ is strongly transverse to $\Ione \times (\phi_1 \circ h_1)$.  Let $e: P \to X_0 \times X_1$ denote the pseudocycle given by $f \cdot (\Ione \times (\phi_1 \circ h_1))$.  By \cref{lem:IntersectionLemmas} \cref{lem:GenericTransversality}, there exists $\phi_0 \in \Diff_0^+(X_0)$ such that $pr_0 \circ e$ is strongly transverse to $\phi_0 \circ h_0$.  This proves item (ii).

We now show that $f$ is strongly transverse to $\phi_0 \circ h_0 \times \phi_1 \circ h_1$.  Suppose that $\Bd(f)$ was covered by $g: W \to X_0 \times X_1$.  Suppose that $\Bd(h_i)$ was covered by $r_i: V_i \to X_i$.  To show strong transversality, we need to show that $f \pitchfork (\phi_0 \circ h_0) \times (\phi_1 \circ h_1)$, $f \pitchfork (\phi_0 \circ h_0) \times (\phi_1 \circ r_1)$, $f \pitchfork (\phi_0 \circ r_0) \times (\phi_1 \circ h_1)$, $f \pitchfork (\phi_0 \circ r_0) \times (\phi_1 \circ r_1)$, $g \pitchfork (\phi_0 \circ h_0) \times (\phi_1 \circ h_1)$, $g \pitchfork (\phi_0 \circ h_0) \times (\phi_1 \circ r_1)$, $g \pitchfork (\phi_0 \circ r_0) \times (\phi_1 \circ h_1)$, and $g \pitchfork (\phi_0 \circ r_0) \times (\phi_1 \circ r_1)$.
We will prove the first (the others being entirely analogous).  Write $f$ as $f = (f_0,f_1)$, where $f_i = pr_i \circ f$.  We need to show that for each
\[ (p,q_0,q_1) \in M \times N_0 \times N_1 \gap \mbox{such that} \gap f(p) = (\phi_0 \circ h_0(q_0),\phi_1 \circ h_1(q_1)) \]
that
\[ (d(f_0)_p + d(\phi_0 \circ h_0)_{q_0}, d(f_1)_p + d(\phi_1 \circ h_1)_{q_1}) : T_p M \oplus T_{q_0} N_0 \oplus T_{q_1} N_1 \to T_{x_0} X_0 \oplus T_{x_1} X_1 \]
is surjective.  Fix $(v_0,v_1) \in T_{x_0}X_0 \oplus T_{x_1}X_1$.  First, consider $(v_0,0)$.  By transversality, $T_{(p,x_0,q_1)} P$ is
\[ \{ (\xi,w_0,\zeta_1) \in T_p M \oplus T_{x_0} X_0 \oplus T_{q_1} N_1 \mid (d(f_0)_p(\xi)+w_0, d(f_1)_p(\xi)+d(\phi_1 \circ h_1)_{q_1}(\zeta_1) ) = (0,0)\}.\]
Since $pr_0 \circ e$ is transverse to $\phi_0 \circ h_0$, there exists $(\xi,w_0,\zeta_1) \in T_{(p,q_0,q_1)} P$ and $\zeta_0 \in T_{q_0}N_0$ such that
\[ d(pr_0 \circ e)(\xi,w_0,\zeta_1) + d(\phi_0 \circ h_0)_{q_0}(\zeta_0) = v_0. \]
Since $pr_0 \circ e = f_0 \circ pr_M|_{P}$, we have that
\[ d(f_0)_p(\xi)+d(\phi_0 \circ h_0)(\zeta_0) = v_0. \]
But
\[ d(f_1)_p(\xi)+d(\phi_1 \circ h_1)_{q_1}(\zeta_1) = 0. \]
So
\[ (d(f_0)_p(\xi)+d(\phi_0 \circ h_0)(\zeta_0),d(f_1)_p(\xi)+d(\phi_1 \circ h_1)_{q_1}(\zeta_1)) = (v_0,0). \]

Now consider $(0,v_1)$.  By the above work, it suffices to show that there exists $\xi \in T_pM$ and $\zeta_i \in T_{q_i}N_i$ such that
\[ (d(f_0)_p(\xi) + d(\phi_0 \circ h_0)_{q_0}(\zeta_0), d(f_1)_p(\xi) + d(\phi_1 \circ h_1)_{q_1}(\zeta_1)) = (*,v_1), \]
where $*$ can be any vector.  Since $pr_1 \circ f$ is transverse to $\phi_1 \circ h_1$, we have that there exists $\xi \in T_pM$ and $\zeta_1 \in T_{q_1}N_1$ such that
\[ d(f_1)_p(\xi) + d(\phi_1 \circ h_1)_{q_1}(\zeta_1) = v_1. \]
So
\[ (d(f_0)_p(\xi_0) + d(\phi_0 \circ h_0)_{q_0}(0), d(f_1)_p(\xi_0) + d(\phi_1 \circ h_1)_{q_1}(\zeta_1)) = (*,v_1), \]
as desired.
\end{proof}

We can now prove \cref{prop:KunnethForPseudocycles}.

\begin{proof}[Proof of \cref{prop:KunnethForPseudocycles}]
Consider the $\phi_i \in \Diff_0^+(X_i)$ given by \cref{lem:FactorTransversality}.  Tautologically, we have an equality of pseudocycles:
\[ f \cdot (\phi_0 \circ h_0 \times \phi_1 \circ h_1) = pr_0 \circ (f \cdot (\Ione \times \phi_1 \circ h_1)) \cdot \phi_0 \circ h_0. \]
Applying \cref{lem:FactorTransversality} and \cref{prop:IntersectionProduct}, we have that
\begin{align*}
[f] \cdot [h_0 \times h_1] & = [f \cdot (\phi_0 \circ h_0 \times \phi_1 \circ h_1)] 
\\& = [pr_0 \circ (f \cdot (\Ione \times \phi_1 \circ h_1)) \cdot \phi_0 \circ h_0]
\\ & = [ pr_0 \circ (f \cdot (\Ione \times \phi_1 \circ h_1))] \cdot [h_0] 
\\ & = [ pr_0 \circ ( [f] \cdot [\Ione \times h_1])] \cdot [h_0].
\end{align*}
\end{proof}


\section{Some algebraic topology lemmas}\label{sec:EasyAT}

In this section, we collect some lemmas that are used to establish our main result.

\begin{lem}\label{lem:PositiveDegreeAndSurj}
Let $X$ be a topological space.  Let $Y$ be a closed oriented connected $n$-dimensional manifold.  Suppose that $f: X \to Y$ is a continuous map such that
\[ f_*: H_{n}(X;\IZ) \to H_{n}(Y;\IZ)\]
is non-zero.  The map
\[ f_*: H_\bullet(X;\IQ) \to H_\bullet(Y;\IQ)\]
is surjective.
\end{lem}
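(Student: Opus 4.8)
The plan is to deduce the statement from Poincar\'{e} duality on $Y$ together with the projection formula for the cap product. Write $[Y] \in H_n(Y;\IZ)$ for the fundamental class determined by the orientation, so that $H_n(Y;\IZ) = \IZ \cdot [Y]$. Since $f_* : H_n(X;\IZ) \to H_n(Y;\IZ)$ is non-zero, its image is a subgroup $d\IZ \cdot [Y]$ with $d \neq 0$; tensoring with $\IQ$ (which is flat, so there is no Tor term), the rational map $f_* : H_n(X;\IQ) \to H_n(Y;\IQ)$ is surjective, and I may fix a class $\alpha \in H_n(X;\IQ)$ with $f_*\alpha = [Y]_{\IQ}$, the rational fundamental class.

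Next I would fix an arbitrary degree $k \geq 0$ and a class $\beta \in H_k(Y;\IQ)$. Because $Y$ is a closed oriented $n$-manifold, Poincar\'{e} duality with rational coefficients says that capping with $[Y]_{\IQ}$ gives an isomorphism $H^{n-k}(Y;\IQ) \xrightarrow{\ \cong\ } H_k(Y;\IQ)$, so I may choose $\omega \in H^{n-k}(Y;\IQ)$ with $\omega \cap [Y]_{\IQ} = \beta$. Now consider the class $f^*\omega \cap \alpha \in H_k(X;\IQ)$. By the projection formula for singular (co)homology, $f_*\bigl(f^*\omega \cap a\bigr) = \omega \cap f_*a$, which holds for any continuous map and any homology class $a$, one gets
\[ f_*\bigl(f^*\omega \cap \alpha\bigr) = \omega \cap f_*\alpha = \omega \cap [Y]_{\IQ} = \beta. \]
Hence $\beta$ lies in the image of $f_*$, and since $k$ and $\beta$ were arbitrary, $f_* : H_\bullet(X;\IQ) \to H_\bullet(Y;\IQ)$ is surjective.

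The argument is short and I do not anticipate a genuine obstacle; the only point requiring a moment's care is that $X$ is merely a topological space, so one should check that all the ingredients are available at that level of generality — namely the cap product $H^p(X) \otimes H_q(X) \to H_{q-p}(X)$, the pullback $f^*$ on singular cohomology, and the projection formula relating these to $f_*$. All of these are standard for singular theory and arbitrary continuous maps, and Poincar\'{e} duality is invoked only on the honest closed oriented manifold $Y$, so there is no difficulty there either.
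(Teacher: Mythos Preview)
Your proof is correct and follows essentially the same approach as the paper's: pick a rational preimage of $[Y]$, write an arbitrary class in $H_\bullet(Y;\IQ)$ as a cap product with $[Y]$ via Poincar\'{e} duality, and apply the projection formula. The paper's version is just a terser rendition of exactly this argument.
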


\begin{proof}
Let $A \in H_\bullet(Y;\IQ)$.  By Poincar\'{e} duality, there exists $\alpha \in H^{n -\bullet}(Y;\IQ)$ such that $A = [Y] \cap \alpha$.  Since $f_*$ is non-zero on $H_n(-;\IZ)$, there is a $C \in H_{n}(X;\IQ)$ such that $f_*(C) = [Y]$.  By naturality, $f_*(C \cap f^*(\alpha)) = A$.  So $f_*$ is surjective on rational homology.
\end{proof}

\begin{lem}\label{lem:PseudoFiniteIndex}
Let $Y$ be a closed oriented connected $n$-dimensional manifold.  Suppose that $f: X \to Y$ is an $n$-pseudocycle whose class in $\sH_n(Y)$ is non-zero.  The image of
\[ f_*: \pi_1(X) \to \pi_1(Y)\]
is a finite index subgroup of $\pi_1(Y)$.
\end{lem}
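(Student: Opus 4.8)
The plan is to argue by contradiction using covering space theory, mimicking the classical fact that a degree $d\neq 0$ map between closed oriented $n$-manifolds has $\pi_1$-image of index at most $|d|$, but carried out at the level of Zinger's isomorphism $\Phi$ rather than with an honest fundamental class. First I would note that we may assume $X$ is connected (otherwise $\pi_1(X)$ and the statement refer to the component of the basepoint). Suppose, for contradiction, that $H\coloneqq\Ima\bigl(f_*:\pi_1(X)\to\pi_1(Y)\bigr)$ has infinite index in $\pi_1(Y)$, and let $p:\widetilde{Y}\to Y$ be the connected covering with $p_*\pi_1(\widetilde{Y})=H$. Since $Y$ is compact and $p$ has infinitely many sheets, $\widetilde{Y}$ is a non-compact, connected, oriented $n$-manifold without boundary; in particular $H_n(\widetilde{Y};\IZ)=0$. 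Because $f_*\pi_1(X)\subseteq H$ and $X$ is connected and locally path-connected, $f$ lifts through $p$ to a (smooth) map $\widetilde{f}:X\to\widetilde{Y}$ with $p\circ\widetilde{f}=f$.

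Next I would show that $\Phi([f])=0\in H_n(Y;\IZ)$, which contradicts \cref{thm:ZingerIsomorphism} together with the hypothesis $[f]\neq 0$. Recall Zinger's construction, \cref{con:ZingerMap}: choose $g:W\to Y$ covering $\Bd(f)$ with $\dim W\le n-2$; by \cref{lem:ZingerCover} and the Mayer--Vietoris argument behind \cref{prop:ZingerNeighborhood} we may take the Zinger neighborhood of $g(W)$ to be $U=\bigcup_{i=0}^{n-2}U_i$, where every finite intersection of the $U_i$ is a disjoint union of contractible open sets, so that $H_j(U;\IZ)=0$ for $j\ge n-1$. Pick a compact codimension-zero submanifold with boundary $X_0\subseteq X$ with $\partial X_0\subseteq f^{-1}(U)$ and $X\smallsetminus f^{-1}(U)\subseteq X_0$; then $\Phi([f])$ is the class in $H_n(Y;\IZ)\cong H_n(Y,U;\IZ)$ corresponding to $f_*[X_0,\partial X_0]$.

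Now set $\widetilde{U}\coloneqq p^{-1}(U)$. Since $\widetilde{f}^{-1}(\widetilde{U})=f^{-1}(U)$, the lift gives a map of pairs $\widetilde{f}:(X_0,\partial X_0)\to(\widetilde{Y},\widetilde{U})$ with $p\circ\widetilde{f}=f$, so $f_*[X_0,\partial X_0]=p_*\widetilde{f}_*[X_0,\partial X_0]$ with $\widetilde{f}_*[X_0,\partial X_0]\in H_n(\widetilde{Y},\widetilde{U};\IZ)$. The crucial point is that $H_n(\widetilde{Y},\widetilde{U};\IZ)=0$: each contractible component of a $U_i$ (and of a finite intersection $U_{i_1}\cap\cdots\cap U_{i_\ell}$) is connected and simply connected, hence evenly covered by $p$, so $\{p^{-1}(U_i)\}_{i=0}^{n-2}$ is again a family of $n-1$ open sets all of whose finite intersections are disjoint unions of contractibles; running the Mayer--Vietoris induction of \cref{prop:ZingerNeighborhood} in $\widetilde{Y}$ gives $H_j(\widetilde{U};\IZ)=0$ for $j\ge n-1$, and the long exact sequence of $(\widetilde{Y},\widetilde{U})$ together with $H_n(\widetilde{Y};\IZ)=0$ then forces $H_n(\widetilde{Y},\widetilde{U};\IZ)=0$. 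Hence $f_*[X_0,\partial X_0]=0$, so $\Phi([f])=0$, the desired contradiction.

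The routine ingredients are the lifting criterion, functoriality of relative homology, and the vanishing of the top integral homology of a connected non-compact manifold. The one non-formal step, and the main obstacle, is the claim that the Zinger neighborhood $U$ may be chosen so that its preimage $p^{-1}(U)$ still has vanishing homology in degrees $\ge n-1$; this is precisely where one uses that the contractible opens produced by \cref{lem:ZingerCover} are simply connected, hence evenly covered, so that a good cover pulls back along $p$ to a good cover.
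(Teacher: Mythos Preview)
Your argument is correct and is essentially the same as the paper's proof: both pass to the covering $\widetilde{Y}\to Y$ corresponding to $\Ima(f_*)$, lift $f$, observe that the Zinger good cover $\{U_i\}$ pulls back along the covering map to a good cover $\{\widetilde{U}_i\}$ (because contractible opens are simply connected, hence evenly covered), and then compare $\Phi([f])$ with $H_n(\widetilde{Y};\IZ)$ via the resulting commutative diagram in relative homology. The only difference is cosmetic---you phrase it as a contradiction (assume infinite index, hence $H_n(\widetilde{Y})=0$, hence $H_n(\widetilde{Y},\widetilde{U})=0$, hence $\Phi([f])=0$), whereas the paper argues directly that the nonvanishing of $\Phi([f])$ forces $H_n(\widetilde{Y})\neq 0$ and hence $\widetilde{Y}$ is compact.
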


\begin{proof}
Let $\pi: \widetilde{Y} \to Y$ denote the covering space associated to the subgroup $\Ima(f_*) \subset \pi_1(Y)$ \cite[Proposition 1.36]{Hatcher_AT}.  The covering degree of $\pi$ is given by the index of the subgroup $\Ima(f_*) \subset \pi_1(Y)$ \cite[Proposition 1.32]{Hatcher_AT}.  Notice that the covering degree of $\pi$ is finite if and only if $\widetilde{Y}$ is compact; however, $\widetilde{Y}$ is orientable, so $\widetilde{Y}$ is compact if and only if $H_{n}(\widetilde{Y};\IZ) \neq 0$.  So to prove the lemma, it suffices to show that $H_{n}(\widetilde{Y};\IZ) \neq 0$.  To see this, by homotopy lifting \cite[Proposition 1.33]{Hatcher_AT}, we have a commutative diagram:
\[ \xymatrix{ & \widetilde{Y} \ar[d]^\pi \\ X \ar[ru]^{\widetilde{f}} \ar[r]^f & Y. \\ } \]
By \cref{lem:ZingerCover}, there exist open subsets $U_i\subset Y$ for $0 \leq i \leq n-2$, such that each intersection
\[ U_{i_1} \cap \cdots \cap U_{i_\ell} \gap \mbox{for} \gap 0 \leq i_j \leq n-2 \]
is a disjoint union of contractible open subsets and
\[ \mbox{Bd}(f) \subset \bigcup_i U_i.\]
Define $\widetilde{U_i} = \pi^{-1}(U_i)$.  Since $\pi$ is a covering map, for $0 \leq i \leq n-2$, each intersection
\[ \wt{U}_{i_1} \cap \cdots \cap \wt{U}_{i_\ell} \gap \mbox{for} \gap 0 \leq i_j \leq n-2 \]
is a disjoint union of contractible open subsets.  So by an inductive argument with Mayer-Vietoris,
\[ H_\bullet(\widetilde{U_i};\IZ) = 0 = H_\bullet(U;\IZ),\gap \mbox{for} \gap \bullet > 0.\]

Let $X_0$ be an oriented codimension zero submanifold of $X$ with boundary as in \cref{con:ZingerMap}.  We have a commutative diagram:
\[ \xymatrix{ & H_n(\widetilde{Y},\widetilde{U};\IZ) \ar[d]^\pi & H_n(\widetilde{Y};\IZ) \ar[d]^\pi \ar[l]^\cong \\ H_n(X_0,\partial X_0;\IZ) \ar[r]^f \ar[ur]^{\widetilde{f}} & H_n(Y,U;\IZ) & H_n(Y;\IZ). \ar[l]^\cong \\ } \]
The two left, horizontal arrows are isomorphisms by the long exact sequence for the homology of the pairs $(\widetilde{Y},\widetilde{U})$ and $(Y,U)$ and the homological conditions for $\widetilde{U}$ and $U$.  By assumption, the composition along the bottom row of the diagram is non-zero.  Consequently, $H_n(\widetilde{Y};\IZ) \neq 0$.  This completes the proof of the lemma.
\end{proof}

We conclude this section with a discussion on oriented sphere bundles.

\begin{defn}
An \emph{oriented $S^2$-bundle} over a manifold $X$ is a fibre bundle $\pi: \sE \to X$ with fibre $S^2$ such that there exists a collection of contractible open subsets $\{U_i\}_i$ of $X$, trivializations
\[ \phi_i: \pi^{-1}(U_i) \cong U_i \times S^2, \]
and generators
\[ \fo_i \in H^{2}(\sE|_{U_i};\IZ) \cong H^2(S^2;\IZ)  \]
so that over the intersections $U_{ij} = U_i \cap U_j$,
\[ \fo_i|_{\sE|_{U_{ij}}} = \fo_j|_{\sE|_{U_{ij}}}. \]
\end{defn}

Our oriented $S^2$-bundles will arise from the following situation:

\begin{lem}\label{lem:QuotientS2BundleOrientation}
Let $G$ be a Lie group.  Suppose that $X$ and $S^2$ admit smooth $G$-actions, where $G$ acts freely and properly on $X$ and $G$ acts via orientation preserving diffeomorphisms on $S^2$.  The projection
\[ \pi: X \times_G S^2 \to X/G\footnote{Here $X \times_G S^2$ denotes the quotient of $X \times S^2$ by the equivalence relation $(x,z) \sim (g \cdot x, g \cdot z)$ with $g \in G$.} \]
is an oriented $S^2$-bundle.
\end{lem}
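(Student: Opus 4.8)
The plan is to exhibit $\pi$ as the fibre bundle associated to the principal $G$-bundle $X \to X/G$ with fibre the $G$-space $S^2$, and then to upgrade the resulting fibre bundle structure to an oriented $S^2$-bundle using the fact that $G$ acts through $\Diff^+(S^2)$.

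First I would record the following standard facts. Since $G$ acts freely and properly on $X$, the quotient $X/G$ is a smooth manifold and $q \colon X \to X/G$ is a principal $G$-bundle (this is the same slice-theorem input used in the proof of \cref{prop:NaiveCompactificationOfIndecomp}). The diagonal action of $G$ on $X \times S^2$ is again free and proper, so $\sE \coloneqq X \times_G S^2$ is a smooth manifold and $\pi$ is a smooth fibre bundle with fibre $S^2$. Choosing a good cover $\{U_i\}$ of $X/G$ (every nonempty finite intersection contractible) together with local sections $\sigma_i \colon U_i \to X$ of $q$ produces trivializations $\phi_i \colon \pi^{-1}(U_i) \to U_i \times S^2$, whose transition maps on $U_{ij} \coloneqq U_i \cap U_j$ have the form $(x,z) \mapsto (x, \rho(g_{ij}(x)) \cdot z)$, where $g_{ij} \colon U_{ij} \to G$ are the transition functions of $q$ and $\rho \colon G \to \Diff(S^2)$ is the given action; by hypothesis $\rho(G) \subset \Diff^+(S^2)$.

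Next, fix an orientation of $S^2$ with generator $o \in H^2(S^2;\IZ)$ and set $\mathfrak{o}_i \coloneqq \phi_i^*\, pr_{S^2}^*\, o \in H^2(\pi^{-1}(U_i);\IZ)$. Since $U_i$ is contractible, $\pi^{-1}(U_i)$ deformation retracts onto a fibre, so $H^2(\pi^{-1}(U_i);\IZ) \cong H^2(S^2;\IZ)$ and $\mathfrak{o}_i$ is a generator. To verify the compatibility $\mathfrak{o}_i|_{\sE|_{U_{ij}}} = \mathfrak{o}_j|_{\sE|_{U_{ij}}}$, I would express $\mathfrak{o}_j|_{\pi^{-1}(U_{ij})}$ in the chart $\phi_i$: it becomes the pullback of $o$ along the map $U_{ij} \times S^2 \to S^2$ which on each fibre $\{x\} \times S^2$ is the orientation-preserving diffeomorphism given by the transition function, hence this class restricts to $o$ on every fibre. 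Because $U_{ij}$ is contractible, the Künneth formula (using $H^1(S^2) = 0$) gives that restriction to a fibre $H^2(U_{ij} \times S^2;\IZ) \to H^2(S^2;\IZ)$ is an isomorphism; therefore the class in question equals $pr_{S^2}^*\, o$, i.e. $\mathfrak{o}_j|_{\pi^{-1}(U_{ij})} = \mathfrak{o}_i|_{\pi^{-1}(U_{ij})}$. This checks all the conditions in the definition of an oriented $S^2$-bundle.

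The only inputs that are not purely formal are the principal-bundle structure of $q$ (a slice-theorem fact already used in the paper) and the bit of bookkeeping that converts ``the transition functions act fibrewise by orientation-preserving diffeomorphisms'' into ``the local generators $\mathfrak{o}_i$ agree on overlaps.'' I expect the mild care to be concentrated in that last point: one must choose the trivializing cover so that the overlaps $U_{ij}$ are cohomologically trivial enough for the restriction-to-a-fibre map on $H^2$ to be injective, which is why I would start from a good cover of $X/G$ rather than an arbitrary trivializing cover. Everything else is routine.
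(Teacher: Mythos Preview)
Your proposal is correct and follows essentially the same route as the paper: realize $X \times_G S^2$ as the associated bundle of the principal $G$-bundle $X \to X/G$, write down the transition functions coming from the $G$-action on $S^2$, define $\mathfrak{o}_i$ by pulling back a fixed generator of $H^2(S^2;\IZ)$ through the trivializations, and check compatibility on overlaps using that $G$ acts orientation-preservingly. The paper's argument simply asserts that $\Psi_{ij}^*\fo = \fo$ follows from orientation-preservingness, whereas you spell out the passage from the fibrewise statement to the global one via the K\"unneth isomorphism over a good cover; this extra care is appropriate but does not constitute a different approach.
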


\begin{proof}
By the Slice Theorem \cite[Proposition 2.2.2, Remark 2.2.3, and Section 4.1]{Palais_ExistenceOfSlices}, the projection
\[ \pi_X: X \to X/G\]
is a smooth principal $G$-bundle.  We have local trivializations over $U_i \subset X/G$
\[ \Phi_i: \pi_X^{-1}(U_i) \to U_i \times G, \gap \Phi_i(x) = (\pi_X(x),\phi_i(\pi_X(x))), \]
where $\phi_i: U_i \to G$.  The $\Phi_i$ are equivariant with respect to the $G$-action on $\pi_X^{-1}(U_i)$ and the $G$-action on $U_i \times G$ given by
\[ h \cdot (p,g) = (p,h \cdot g) \gap \mbox{for} \gap h \in G.\]
There are transition functions relating the $\Phi_i$,
\[ \Phi_{ij}: U_{ij} \times G \to U_{ij} \times G, \gap (p,g) \mapsto (p,g \cdot \phi_{ij}(p)), \]
where $\phi_{ij}: U_{ij} \to G$.  So $\Phi_i = \Phi_{ij} \circ \Phi_j$.  Notice that $\Phi_{ij}$ is $G$-equivariant.

These determine trivializations of $\pi$ over $U_i \subset X/G$,
\[ \Psi_i: \pi^{-1}(U_i) = (\pi_X^{-1}(U_i) \times_G S^2) \to U_i \times S^2, \gap \Psi_i = \sigma_i \circ (\Phi_i \times_G \Ione), \]
where
\[ \sigma_i: (U_i \times G) \times_G S^2 \to U_i \times S^2, \gap \sigma_i([(p,g),z]) = (p,g^{-1} \cdot z), \]
with transition functions relating the $\Psi_i$,
\[ \Psi_{ij}: U_{ij} \times S^2 \to U_{ij} \times S^2, \gap \Psi_{ij}(p,z) = (p, \phi_{ij}(p)^{-1} \cdot z).\]
In particular, $\Psi_i = \Psi_{ij} \circ \Psi_{j}$.  So we have shown that the projection is an $S^2$-bundle.

For orientability, fix an orientation $\fo \in H^2(S^2;\IZ)$.  Define $\fo_i \in H^2(\pi^{-1}(U_i);\IZ)$ via $\fo_i = \Psi_i^*(1 \otimes \fo)$.  Note, $\fo_i|_{\sE|_{U_{ij}}} = \fo_j|_{\sE|_{U_{ij}}}$ if and only if $\Psi_{ij}^* \fo = \fo$.  But the $G$-action on $S^2$ preserves $\fo$.  So by the above description of $\Psi_{ij}$, the desired equality follows.
\end{proof}

Finally, we recall a flavor of the Gysin sequence:

\begin{lem}\label{lem:SphereBundleProperties}
Let $\pi: \sE \to X$ be an oriented $S^2$-bundle.  There exists an exact sequence
\[ \xymatrix{ H_2(S^2;\IZ) \ar[r]^{i_x} & H_2(\sE;\IZ) \ar[r]^\pi & H_2(X;\IZ) \ar[r] & 0. } \]
\end{lem}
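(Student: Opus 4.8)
The plan is to derive the sequence from the Serre spectral sequence (or equivalently the Leray--Hirsch/Gysin package) of the oriented $S^2$-bundle $\pi : \sE \to X$. Since the fibre $S^2$ is simply connected, $\pi_1(X)$ acts trivially on $H_\bullet(S^2;\IZ)$, so the homology Serre spectral sequence has $E^2_{p,q} = H_p(X; H_q(S^2;\IZ))$ with untwisted coefficients, and it is concentrated in the two rows $q = 0$ and $q = 2$. The only possibly nonzero differential into or out of total degree $2$ is $d^3 : E^3_{3,0} \to E^3_{0,2}$, and in the range we care about the edge maps give exactly an exact sequence $H_2(S^2;\IZ) \xrightarrow{i_x} H_2(\sE;\IZ) \xrightarrow{\pi_*} H_2(X;\IZ) \to 0$, where $i_x$ is induced by the inclusion of a fibre $\pi^{-1}(x) \hookrightarrow \sE$ and $\pi_*$ is induced by the projection. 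The surjectivity of $\pi_*$ on the right is the edge-map statement $E^\infty_{2,0}$ is a quotient of $E^2_{2,0} = H_2(X;\IZ)$, combined with the fact that $E^2_{2,0}$ receives no differentials; the exactness in the middle encodes that the kernel of $\pi_*$ is the image of $E^\infty_{0,2}$, which is a quotient of $H_2(S^2;\IZ)$.

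Concretely, I would carry it out in these steps. First, record that $\pi$ is an orientable (hence homologically simple) fibration with fibre $S^2$, so the spectral sequence has untwisted coefficients; this uses only that the structure group acts by orientation-preserving diffeomorphisms, which is built into the definition of an oriented $S^2$-bundle in the excerpt. Second, write down the $E^2$-page: it has nonzero entries only in rows $q=0$ (giving $H_p(X;\IZ)$) and $q=2$ (giving $H_p(X;\IZ)$ again, since $H_2(S^2;\IZ) \cong \IZ$). Third, identify the two relevant filtration pieces of $H_2(\sE;\IZ)$: there is a short exact sequence $0 \to E^\infty_{0,2} \to H_2(\sE;\IZ) \to E^\infty_{2,0} \to 0$, because $E^\infty_{1,1} = 0$. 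Fourth, observe $E^\infty_{2,0} = E^2_{2,0} = H_2(X;\IZ)$ (no incoming or outgoing differentials in the relevant pages hit bidegree $(2,0)$), and the composite $H_2(\sE;\IZ) \twoheadrightarrow E^\infty_{2,0} = H_2(X;\IZ)$ is the edge homomorphism, which is $\pi_*$; this gives surjectivity of $\pi_*$ and identifies $\ker \pi_* = E^\infty_{0,2}$. Fifth, $E^\infty_{0,2}$ is a quotient of $E^2_{0,2} = H_0(X;H_2(S^2;\IZ)) = H_2(S^2;\IZ)$ (the only differential out is $d^3$ from $E^3_{3,0}$, which only makes the image smaller), and the surjection $H_2(S^2;\IZ) = E^2_{0,2} \twoheadrightarrow E^\infty_{0,2} \subseteq H_2(\sE;\IZ)$ is the fibre-inclusion edge map $i_x$. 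Splicing these together yields exactly the asserted four-term exact sequence.

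An alternative, more hands-on route avoiding spectral sequences: pass to the associated oriented rank-$3$ vector bundle (or rather work with the Thom--Gysin sequence of the sphere bundle directly). The Gysin sequence in cohomology for an $S^2$-bundle reads $\cdots \to H^{k-3}(X) \xrightarrow{\cup e} H^k(X) \xrightarrow{\pi^*} H^k(\sE) \to H^{k-2}(X) \to \cdots$, and dualizing (or using the homology Gysin sequence $\cdots \to H_k(\sE) \xrightarrow{\pi_*} H_k(X) \xrightarrow{\cap e} H_{k-3}(X) \to H_{k-1}(\sE) \to \cdots$) gives, in low degrees, $H_2(\sE;\IZ) \xrightarrow{\pi_*} H_2(X;\IZ) \xrightarrow{\cap e} H_{-1}(X;\IZ) = 0$, so $\pi_*$ is surjective, and the piece $H_3(X;\IZ) \to H_2(S^2;\IZ) \xrightarrow{i_x} H_2(\sE;\IZ) \xrightarrow{\pi_*} H_2(X;\IZ)$ is the rest of the sequence (the $H_2(S^2;\IZ)$ term being $H_{2-2}(X) \otimes H_2(S^2)$-type contribution, i.e. the fibre class). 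I would pick whichever of these two presentations is cleanest given what has already been cited in the paper; since the paper already freely invokes covering-space and Mayer--Vietoris machinery, citing the Gysin sequence for oriented sphere bundles (e.g. from Milnor--Stasheff or Hatcher) is the most economical, and the statement of the lemma is then literally a truncation of that sequence.

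The main obstacle, such as it is, is purely bookkeeping: making sure the maps labelled $i_x$ and $\pi$ in the statement really are the edge homomorphisms (fibre inclusion and bundle projection) rather than some abstract spectral-sequence maps, and checking that no stray differential spoils exactness in total degree $2$. Both points are routine once one notes $H_1(S^2) = 0$ kills the $(1,1)$ entry and that bidegree $(2,0)$ is a permanent cycle. There is no genuine difficulty; the lemma is a standard consequence of the structure of the sphere-bundle spectral sequence, and the only care needed is to state it with $\IZ$-coefficients (which is fine, as all the relevant groups $H_\bullet(S^2;\IZ)$ are free).
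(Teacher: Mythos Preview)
Your proposal is correct. In fact, the ``alternative, more hands-on route'' you sketch at the end --- reading the statement off as a truncation of the homology Gysin sequence $\cdots \to H_3(X) \to H_0(X) \xrightarrow{\tau} H_2(\sE) \xrightarrow{\pi_*} H_2(X) \to 0$ and identifying the transgression $\tau$ with the fibre-inclusion map $(i_x)_*$ --- is exactly the paper's proof. Your primary route via the Serre spectral sequence is a valid and essentially equivalent derivation (the Gysin sequence is, after all, just the two-row spectral sequence repackaged), but it is slightly more machinery than necessary; the paper simply cites the Gysin sequence and the Thom-isomorphism identification of $\tau$ in two sentences.
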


\begin{proof}
The claim follows from the homological Gysin sequence for an oriented sphere bundle
\[ \xymatrix{ \cdots \ar[r] & H_3(X) \ar[r] & H_0(X) \ar[r]^\tau & H_2(\sE) \ar[r]^\pi & H_2(X) \ar[r] & 0 }\]
and the fact that $\tau$ is identified with the map
\[ (i_x)_*: H_2(S^2;\IZ) \to H_2(\sE;\IZ) \] 
after identifying $[x] \in H_0(X)$ with $[\pi^{-1}(x)] \in H_2(S^2;\IZ)$ via the Thom isomorphism for the oriented $S^2$-bundle.
\end{proof}

\bibliographystyle{alpha}
\bibliography{references.bib}

\begin{thebibliography}{KMM92}

\bibitem[AMS21]{AbouzaidMcLeanSmith_HamiltonianLoops}
Mohammed Abouzaid, Mark McLean, and Ivan Smith.
\newblock Complex cobordism, {H}amiltonian loops and global {K}uranishi charts,
  2021.

\bibitem[BX22]{BaiXu_IntegerTypeGWInvariants}
Shaoyun Bai and Guangbo Xu.
\newblock An integral {E}uler cycle in normally complex orbifolds and
  {$\mathbb{Z}$}-valued {G}romov-{W}itten type invariants, 2022.

\bibitem[Cam91]{Campana_OnTwistorSpacesOfTheClassC}
F.~Campana.
\newblock On twistor spaces of the class {$\mathscr{C}$}.
\newblock {\em J. Differential Geom.}, 33(2):541--549, 1991.

\bibitem[CM07]{CieliebakMohnke_HypersurfacesAndTransversality}
Kai Cieliebak and Klaus Mohnke.
\newblock Symplectic hypersurfaces and transversality in {G}romov-{W}itten
  theory.
\newblock {\em J. Symplectic Geom.}, 5(3):281--356, 2007.

\bibitem[Deb01]{Debarre_HigherDimensionalAlgebraicGeometry}
Olivier Debarre.
\newblock {\em Higher-dimensional algebraic geometry}.
\newblock Universitext. Springer-Verlag, New York, 2001.

\bibitem[FO99]{FukayaOno_ArnoldConjectureAndGromovWittenInvariant}
Kenji Fukaya and Kaoru Ono.
\newblock Arnold conjecture and {G}romov-{W}itten invariant.
\newblock {\em Topology}, 38(5):933--1048, 1999.

\bibitem[FP10]{FinePanov_HyperbolicFano}
Joel Fine and Dmitri Panov.
\newblock Building symplectic manifolds using hyperbolic geometry.
\newblock In {\em Proceedings of the {G}\"{o}kova {G}eometry-{T}opology
  {C}onference 2009}, pages 124--136. Int. Press, Somerville, MA, 2010.

\bibitem[GP74]{GuilleminPollack_DifferentialTopology}
Victor Guillemin and Alan Pollack.
\newblock {\em Differential topology}.
\newblock Prentice-Hall, Inc., Englewood Cliffs, N.J., 1974.

\bibitem[Hat02]{Hatcher_AT}
Allen Hatcher.
\newblock {\em Algebraic topology}.
\newblock Cambridge University Press, Cambridge, 2002.

\bibitem[HWZ17]{HoferWysockiZehnder_ApplicationsOfPolyfoldTheory1}
H.~Hofer, K.~Wysocki, and E.~Zehnder.
\newblock Applications of polyfold theory {I}: {T}he polyfolds of
  {G}romov-{W}itten theory.
\newblock {\em Mem. Amer. Math. Soc.}, 248(1179):v+218, 2017.

\bibitem[KMM92]{KollarMiyaokaMori_RationalConnectednessAndBoundednessOfFanoManifolds}
J\'{a}nos Koll\'{a}r, Yoichi Miyaoka, and Shigefumi Mori.
\newblock Rational connectedness and boundedness of {F}ano manifolds.
\newblock {\em J. Differential Geom.}, 36(3):765--779, 1992.

\bibitem[Kre15]{Krestiachine_DonaldsonGW}
Alexandre Krestiachine.
\newblock {\em Donaldson hypersurfaces and Gromov-Witten invariants}.
\newblock PhD thesis, Humboldt-Universität zu Berlin,
  Mathematisch-Naturwissenschaftliche Fakultät, 2015.

\bibitem[Lee13]{Lee_IntroductionToSmoothManifolds}
John~M. Lee.
\newblock {\em Introduction to smooth manifolds}, volume 218 of {\em Graduate
  Texts in Mathematics}.
\newblock Springer, New York, second edition, 2013.

\bibitem[LT98]{LiTian_VirtualModuliCyclesAndGromovWittenInvariantsOfGeneralSymplecticManifolds}
Jun Li and Gang Tian.
\newblock Virtual moduli cycles and {G}romov-{W}itten invariants of general
  symplectic manifolds.
\newblock In {\em Topics in symplectic {$4$}-manifolds ({I}rvine, {CA}, 1996)},
  First Int. Press Lect. Ser., I, pages 47--83. Int. Press, Cambridge, MA,
  1998.

\bibitem[MS12]{McDuffSalamon_JHolomorphicCurves}
Dusa McDuff and Dietmar Salamon.
\newblock {\em {$J$}-holomorphic curves and symplectic topology}, volume~52 of
  {\em American Mathematical Society Colloquium Publications}.
\newblock American Mathematical Society, Providence, RI, second edition, 2012.

\bibitem[Pal61]{Palais_ExistenceOfSlices}
Richard~S. Palais.
\newblock On the existence of slices for actions of non-compact {L}ie groups.
\newblock {\em Ann. of Math. (2)}, 73:295--323, 1961.

\bibitem[Par16]{Pardon_AlgebraicApproachToVFC}
John Pardon.
\newblock An algebraic approach to virtual fundamental cycles on moduli spaces
  of pseudo-holomorphic curves.
\newblock {\em Geom. Topol.}, 20(2):779--1034, 2016.

\bibitem[Rua99]{Ruan_VirtualNeighborhoodsAndPseudoHolomorphicCurves}
Yongbin Ruan.
\newblock Virtual neighborhoods and pseudo-holomorphic curves.
\newblock In {\em Proceedings of 6th {G}\"{o}kova {G}eometry-{T}opology
  {C}onference}, volume~23, pages 161--231, 1999.

\bibitem[Sie96]{Siebert_GromovWittenInvariantsOfGeneralSymplecticManifolds}
Bernd Siebert.
\newblock Gromov-{W}itten invariants of general symplectic manifolds, 1996.

\bibitem[Tia12]{Tian_RationallyConnectedThreefolds}
Zhiyu Tian.
\newblock Symplectic geometry of rationally connected threefolds.
\newblock {\em Duke Math. J.}, 161(5):803--843, 2012.

\bibitem[Zin08]{Zinger_PseudocyclesAndIntegralHomology}
Aleksey Zinger.
\newblock Pseudocycles and integral homology.
\newblock {\em Trans. Amer. Math. Soc.}, 360(5):2741--2765, 2008.

\end{thebibliography}

\end{document}